\theoremstyle{plain}
\newtheorem{theorem}{Theorem}[section]
\newtheorem{lemma}[theorem]{Lemma}
\newtheorem{corollary}[theorem]{Corollary}
\newtheorem{proposition}[theorem]{Proposition}
\newcommand{\arxiv}[2]{\href{https://arxiv.org/abs/#1}{\texttt{arXiv:#1}} \texttt{[#2]}}
\newcommand{\doi}[1]{\url{https://doi.org/#1}}
\definecolor{mauve}{rgb}{0.58,0,0.82}
\definecolor{dkgreen}{rgb}{0,0.6,0}
\lstdefinestyle{pitonche} {
    language = Python,
    basicstyle = footnotesizettfamily,
    showspaces = false,
    showstringspaces = false,
    breakautoindent = true,
    flexiblecolumns = true,
    keepspaces = true,
    stepnumber = 1,
    xleftmargin = 0pt
}
\small\color{gray},
\title{On the maximum spectral radius of connected\\ graphs with a prescribed order and size}
\author[1,2]{Ivan Damnjanović}
\affil[1]{Faculty of Electronic Engineering, University of Niš, Niš, Serbia}
\affil[2]{Faculty of Mathematics, Natural Sciences and Information Technologies, University of Primorska, Koper, Slovenia}
\date{}
\begin{document}

\maketitle

\begin{abstract}
The spectral radius of a graph is the largest modulus of an eigenvalue of its adjacency matrix. Let $\mathcal{C}_{n, e}$ be the set of all the connected simple graphs with $n$ vertices and $n - 1 + e$ edges. Here, we solve the spectral radius maximization problem on $\mathcal{C}_{n, e}$ when $e \le 130$ or $n \ge e + 2 + 13\sqrt{e}$.
\end{abstract}

\bigskip\noindent
{\bf Keywords:} spectral radius, index, connected graph, threshold graph, stepwise matrix.

\bigskip\noindent
{\bf Mathematics Subject Classification:} 05C50, 05C35.

\section{Introduction}

Throughout the paper, we take all the graphs to be undirected, finite and simple. For a given graph $G$, we use $V(G)$ and $A(G)$ to signify its vertex set and adjacency matrix, respectively. The \emph{spectral radius}, also known as the \emph{index}, of a graph $G$, denoted by $\rho(G)$, is the largest modulus of an eigenvalue of $A(G)$. It is known that $\rho(G)$ is an eigenvalue of $A(G)$ for any graph $G$; see \cite[Chapter 8]{godroy2001}. For more results on the spectral radii of graphs, the reader is referred to \cite{stevanovic2018} and the references therein.

We use $G_1 \lor G_2$ and $G_1 + G_2$ to signify the join and disjoint union, respectively, of graphs $G_1$ and $G_2$. A graph $G$ is a \emph{threshold graph} if its vertices can be ordered in such a way that its corresponding adjacency matrix is stepwise in the sense that whenever $A_{ij} = 1$ with $1 \le i < j \le n$, we have $A_{k \ell} = 1$ for any $k, \ell \in \{1, 2, \ldots, n \}$ such that $k < \ell$, $k \le i$ and $\ell \le j$; see \cite{chvaham1977, mahpel1995}.

Let $\mathcal{G}_m$ be the set of all the graphs with $m \in \mathbb{N}_0$ edges. In 1976, Brualdi and Hoffman \cite[p.\ 438]{berfourversot1978} initiated the extremal problem of maximizing the spectral radius on $\mathcal{G}_m$. Afterwards, Brualdi and Hoffman obtained the next two results in 1985.

\begin{theorem}[\hspace{1sp}{\cite[Theorem~2.1]{bruhoff1985}}]
Suppose that $G$ attains the maximum spectral radius on $\mathcal{G}_m$ for some $m \in \mathbb{N}_0$. Then $G$ is a threshold graph.
\end{theorem}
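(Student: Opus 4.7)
The strategy is to extract from the extremal graph $G$ the canonical vertex ordering induced by a Perron--Frobenius eigenvector and to show that in this ordering the adjacency matrix of $G$ is already stepwise, matching the definition of a threshold graph recalled above.

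First I would reduce to the case in which $G$ has at most one non-trivial component: if two distinct components were both non-trivial, deleting an edge from one of them and reattaching it to a vertex of the component with largest spectral radius would yield a graph in $\mathcal{G}_m$ whose active component strictly contains the previous maximiser, so its spectral radius strictly exceeds $\rho(G)$, contradicting extremality. After this reduction, Perron--Frobenius on the non-trivial component provides a nonnegative eigenvector $\mathbf{x}$ associated with $\rho(G)$ that is strictly positive on every non-isolated vertex. The workhorse is an edge-switching observation: if $u,v,w$ are distinct vertices with $x_u \geq x_v$, $vw \in E(G)$ and $uw \notin E(G)$, then the graph $G'$ obtained by deleting $vw$ and inserting $uw$ lies in $\mathcal{G}_m$, and a direct expansion gives
\[
\mathbf{x}^\top A(G')\mathbf{x} - \mathbf{x}^\top A(G)\mathbf{x} = 2\,x_w (x_u - x_v) \geq 0.
\]
The Rayleigh principle then yields $\rho(G') \geq \mathbf{x}^\top A(G')\mathbf{x}/\mathbf{x}^\top\mathbf{x} \geq \rho(G)$, with strict inequality whenever $x_w > 0$ and $x_u > x_v$; extremality of $G$ forbids the strict case.

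Next I would list the vertices of the non-trivial component as $v_1,\ldots,v_n$ with $x_{v_1} \geq x_{v_2} \geq \cdots \geq x_{v_n}$, append any isolated vertices at the end, and argue that in this ordering the adjacency matrix of $G$ is stepwise. If it were not, there would exist indices $i<j$ together with $k \leq i$ and $\ell \leq j$ with $(k,\ell) \neq (i,j)$, $A_{v_i v_j} = 1$ and $A_{v_k v_\ell} = 0$. Applying the switching lemma to an appropriate triple chosen from $\{v_i,v_j,v_k,v_\ell\}$ (treating the sub-cases $k=i$, $\ell=j$, and $k<i,\;\ell<j$ separately, the last one possibly in two stages that equalise one coordinate and then the other) produces a graph $G' \in \mathcal{G}_m$ with $\rho(G') \geq \rho(G)$, which is fatal unless every intermediate switch turns out to be neutral.

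The main obstacle, as is typical for this kind of variational argument, is precisely the tie case in which $x_{v_i} = x_{v_k}$ or $x_{v_j} = x_{v_\ell}$ makes every switch neutral. I would resolve it by a secondary extremal selection: among all spectral-radius maximisers in $\mathcal{G}_m$ pick $G$ that further minimises the number of violating quadruples $(i,j,k,\ell)$ described above (equivalently, a lexicographic invariant measuring how far the ordered adjacency matrix is from being stepwise). Any neutral switch then produces an element of $\mathcal{G}_m$ with the same spectral radius but strictly fewer violations, contradicting the refined minimality. Hence no violations exist, the adjacency matrix of $G$ is stepwise in the $\mathbf{x}$-decreasing ordering, and $G$ is a threshold graph.
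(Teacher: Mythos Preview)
The paper does not prove this statement; it is quoted from Brualdi and Hoffman (1985) and used as a black box. So there is no ``paper's own proof'' to compare against, and the question is simply whether your argument stands on its own.

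Your overall strategy --- reduce to one non-trivial component, order vertices by a Perron vector, and use the edge-switching / Rayleigh-quotient lemma to force the stepwise shape --- is the standard one and is fine up to the tie case. The gap is in how you dispose of ties. Your secondary extremal selection has two problems. First, by replacing the given maximiser $G$ with a maximiser that minimises the violation count, you are no longer proving the stated theorem (``every maximiser is threshold''); at best you would conclude that \emph{some} maximiser is threshold. Second, and independently, you assert that a neutral switch ``produces an element of $\mathcal{G}_m$ with the same spectral radius but strictly fewer violations'', yet you do not prove this, and it is not obvious: removing the edge $ij$ and adding $kj$ destroys some violating quadruples but may create new ones in which $(i,j)$ now plays the role of the absent pair or $(k,j)$ the role of the present pair, and a net decrease requires a genuine counting argument you have not supplied.

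The clean fix is to observe that the tie case simply cannot occur for an extremal $G$, which makes the secondary selection unnecessary and restores the conclusion for the \emph{given} $G$. Suppose $x_u = x_v$, $vw\in E(G)$, $uw\notin E(G)$, and let $G'$ be the switched graph. Then $\mathbf{x}^\top A(G')\mathbf{x} = \mathbf{x}^\top A(G)\mathbf{x} = \rho\,\mathbf{x}^\top\mathbf{x}$, while extremality forces $\rho(G')=\rho(G)=\rho$; hence $\mathbf{x}$ attains the Rayleigh maximum for $A(G')$ and is therefore an eigenvector of $A(G')$ for $\rho$. But a direct computation gives $(A(G')\mathbf{x})_u = (A(G)\mathbf{x})_u + x_w = \rho x_u + x_w$, and $x_w>0$ because $w$ lies in the non-trivial component. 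This contradicts $A(G')\mathbf{x}=\rho\mathbf{x}$. Consequently every admissible switch is strictly improving, which is already a contradiction to extremality; no tie-breaking device is needed, and the conclusion holds for the arbitrary maximiser you started with.
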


\begin{theorem}[\hspace{1sp}{\cite[Theorem~2.2]{bruhoff1985}}]\label{bruhoff_th}
Let $m = \binom{k}{2}$ with $k \in \mathbb{N}$. Then graph $G$ attains the maximum spectral radius on $\mathcal{G}_m$ if and only if $G \cong K_k + rK_1$ for some $r \in \mathbb{N}_0$.
\end{theorem}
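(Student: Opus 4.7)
The plan is to establish the general inequality $\rho(G)^2 + \rho(G) \le 2m$ for every graph $G$ with $m$ edges and then to analyze the equality case. Since the non-negative root of $\rho^2 + \rho = 2\binom{k}{2} = k(k-1)$ is exactly $\rho = k-1$, this bound immediately yields $\rho(G) \le k-1$ on $\mathcal{G}_m$; combined with the trivial computation $\rho(K_k + rK_1) = \rho(K_k) = k-1$, it will suffice to extract the classification $G \cong K_k + rK_1$ from the equality case.

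To prove the inequality, I would choose a connected component $H$ of $G$ with $\rho(H) = \rho(G) =: \rho$, take the Perron eigenvector of $H$ extended by $0$ to the remaining vertices, and rescale so that its largest entry equals $1$, attained at some vertex $v \in H$. Reading off the eigenvalue equation at $v$ and using $x_u \le 1$ produces
\[
\rho = \rho\, x_v = \sum_{u \sim v} x_u \le d(v),
\]
while applying it twice gives
\[
\rho^2 = \rho^2\, x_v = \sum_{u \sim v} \sum_{w \sim u} x_w \le \sum_{u \sim v} d(u).
\]
The handshake identity then bounds the latter sum:
\[
\sum_{u \sim v} d(u) = 2m - d(v) - \sum_{\substack{u \ne v \\ u \not\sim v}} d(u) \le 2m - d(v).
\]
Adding the two displayed bounds cancels $d(v)$ and produces $\rho^2 + \rho \le 2m$, as desired.

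For the equality analysis, every inequality in the above chain must be tight. The first forces $x_u = 1$ for all $u \sim v$, while the handshake bound becoming an equality forces every vertex outside $N(v) \cup \{v\}$ to be isolated. In particular, the component $H$ satisfies $V(H) \subseteq N(v) \cup \{v\}$, so $|V(H)| \le d(v) + 1 = k$; and $x \equiv 1$ on $V(H)$ combined with the eigenvalue equation restricted to $H$ shows that $H$ is $(k-1)$-regular. Since a $(k-1)$-regular simple graph requires at least $k$ vertices, we conclude $|V(H)| = k$ and $H \cong K_k$, with the remaining $n - k$ vertices of $G$ all isolated.

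I do not foresee a major obstacle. The only delicate point is ensuring that the Perron eigenvector argument carries through cleanly when $G$ may be disconnected, which is handled by restricting to a single extremal component. Notably, this approach does not need to invoke Theorem~\ref{bruhoff_th}'s predecessor (the threshold-ness of the extremal graph) as a preliminary reduction, since the bound $\rho^2 + \rho \le 2m$ holds uniformly over all simple graphs.
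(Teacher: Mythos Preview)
Your argument is correct. Note, however, that the paper does not supply its own proof of this theorem: it is quoted from Brualdi and Hoffman \cite{bruhoff1985} as background, so there is no in-paper proof to compare against directly.

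That said, your route is not the one Brualdi and Hoffman took. What you have written is essentially Stanley's argument \cite{stanley1987}, which appeared two years later: prove the uniform bound $\rho^2+\rho\le 2m$ via the Perron vector normalized at a maximum entry, and then read off the equality case. Brualdi and Hoffman instead first reduce to threshold graphs (the content of the preceding cited theorem in the paper) and then argue on the stepwise adjacency matrix. Your approach is shorter and, as you observe, bypasses the threshold reduction entirely; the trade-off is that the Stanley bound only pins down the extremal graph when $m=\binom{k}{2}$ is triangular, whereas the threshold reduction is the launching point for the full solution (Theorem~\ref{row_th}) for general $m$. One small point worth making explicit in your write-up: equality in $\rho\le d(v)$ gives not only $x_u=1$ for $u\sim v$ but also $d(v)=\rho=k-1$, which you use when you write $|V(H)|\le d(v)+1=k$.
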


Further work on the spectral radius maximization problem on $\mathcal{G}_m$ was done by Friedland \cite{friedland1985, friedland1988} and Stanley \cite{stanley1987}. In a subsequent paper from 1988, Rowlinson finalized the solution to the problem as follows.

\begin{theorem}[\hspace{1sp}{\cite{rowlinson1988}}]\label{row_th}
    Let $m = \binom{k}{2} + t$ with $k \in \mathbb{N}$ and $t \in \{1, 2, \ldots, k - 1 \}$. Then graph $G$ attains the maximum spectral radius on $\mathcal{G}_m$ if and only if $G \cong \left(K_t \lor \left(K_{k - t} + K_1\right)\right) + rK_1$ for some $r \in \mathbb{N}_0$.
\end{theorem}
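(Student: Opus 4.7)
The plan is to combine the two Brualdi--Hoffman theorems above with a structural analysis of connected threshold graphs. By the first of them, any extremal $G \in \mathcal{G}_m$ is a threshold graph; since adjoining isolated vertices does not affect $\rho$, it suffices to identify the unique extremal \emph{connected} threshold graph $H$ on $m$ edges, and then $G \cong H + r K_1$ for some $r \ge 0$. The candidate $H = K_t \lor (K_{k-t} + K_1)$ has $n = k + 1$ vertices, and its spectral radius equals the Perron root of the $3 \times 3$ quotient matrix
\[
B = \begin{pmatrix} t - 1 & k - t & 1 \\ t & k - t - 1 & 0 \\ t & 0 & 0 \end{pmatrix}
\]
coming from the equitable partition $\{V(K_t),\, V(K_{k-t}),\, \{v\}\}$, where $v$ is the isolated vertex of $K_{k-t} + K_1$.

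I would encode any connected threshold graph $H'$ by its $\{D, I\}$-construction word (a D-step attaches a universal vertex, an I-step an isolated one, and the final letter is necessarily D for connectedness); in this encoding $H$ corresponds to the word $D^{k-t-1}\, I\, D^t$. Let $x$ be the positive Perron eigenvector of an extremal $H'$, relabelled so that $x_1 \ge \cdots \ge x_n$. The Brualdi--Hoffman edge-swap inequality then forces the $m$ pairs $(i, j)$ with $A(H')_{ij} = 1$ to be exactly those carrying the $m$ largest products $x_i x_j$ (this is the stepwise pattern). Since $\binom{k}{2} < m$, we immediately obtain $n \ge k + 1$, and since $H' \ne K_n$, the construction word contains at least one I-step.

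The crux is to show that the extremal $H'$ has \emph{exactly} one I-step. I would establish this via an edge-preserving modification of the construction word that strictly decreases the number of I-steps while strictly increasing $\rho$; iterating produces a word with a single I, whose position is then pinned down by $m = \binom{k}{2} + t$ to be the $(k-t+1)$-st, yielding $H' \cong H$. A natural candidate move compensates a pair of I-steps at positions $p_1 < p_2$ against a single D-step at $p_3 = p_1 + p_2 - 1$, swapping their letter types (so I's at $p_1, p_2$ become D's and D at $p_3$ becomes I); the sum $\sum(d_i - 1)$ over D-positions is invariant under this swap, so the total edge count is preserved.

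The principal obstacle is the strict-monotonicity claim for the above move. A Rayleigh-quotient route should work: with $x$ the unit-norm Perron vector of $H'$, one has $\rho(H'') - \rho(H') \ge x^{T}(A(H'') - A(H'))x = 2\bigl(\sum_{\text{added}} x_i x_j - \sum_{\text{removed}} x_i x_j\bigr)$, and one must verify that the added edges carry strictly larger products $x_i x_j$ than the removed ones under the sorted Perron ordering. Boundary issues (e.g.\ $p_3 = n$, which would break connectedness, or the absence of a D-step at the required $p_3$) have to be circumvented by auxiliary edge-preserving adjustments, possibly by first permuting the I-steps via pairwise neighbour swaps. Once strict monotonicity along such a chain of moves is established, iteration reduces any extremal $H'$ to $H$, completing Theorem~\ref{row_th}.
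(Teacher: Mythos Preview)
The paper does not contain a proof of Theorem~\ref{row_th}; it is stated as a known result with a citation to Rowlinson's 1988 paper. There is therefore no ``paper's own proof'' to compare your proposal against, and any assessment must be of your sketch on its own merits (or against Rowlinson's original argument, which the present paper does not reproduce).

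Your overall architecture is reasonable and the reduction to a single-$I$ construction word, together with the observation that the edge count $\binom{p+q}{2}+q$ pins down $(p,q)=(k-t,t)$ uniquely, is correct. However, the proposal has a genuine gap at precisely the point you flag as ``the principal obstacle.'' The move you describe---turning $I$'s at positions $p_1<p_2$ into $D$'s and a $D$ at position $p_3=p_1+p_2-1$ into an $I$---is not in general well-defined: nothing guarantees that position $p_3$ lies in $\{2,\ldots,n\}$, nor that it currently carries a $D$, nor that the resulting word ends in $D$. You acknowledge this and defer to ``auxiliary edge-preserving adjustments,'' but no such adjustments are specified, and it is not clear that a chain of well-defined moves always exists from an arbitrary multi-$I$ word to the target.

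More seriously, even when the move is well-defined, the Rayleigh-quotient inequality $\rho(H'')-\rho(H')\ge 2\bigl(\sum_{\text{added}}x_ix_j-\sum_{\text{removed}}x_ix_j\bigr)$ does not obviously have a positive right-hand side. The added edges are those incident to the new $D$'s at low positions $p_1,p_2$ (so involving large Perron entries), but the removed edges are those from the old $D$ at the high position $p_3$, which is adjacent to \emph{all} of $1,\ldots,p_3-1$, including the vertices with the very largest entries $x_1,\ldots,x_{p_1-1}$. A naive term-by-term comparison does not work, and you give no argument for why the balance comes out positive. Rowlinson's original proof avoids this by using a more local edge rotation (moving a single edge at a time to a position with a strictly larger $x_ix_j$ product) rather than a three-position swap, and the monotonicity there follows directly from the stepwise ordering of the Perron entries. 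As written, your proposal identifies the right target but does not supply the mechanism that forces convergence to it.
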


For any $e \in \mathbb{N}_0$, let $k_e$ be the largest natural number such that $\binom{k_e}{2} \le e$ and let $t_e := e - \binom{k_e}{2}$. Note that $0 \le t_e \le k_e - 1$. We observe that for any $n \in \mathbb{N}$ and $e \in \mathbb{N}_0$, there exists a connected graph of order $n$ and size $n - 1 + e$ if and only if $n \ge b_e$, where
\[
    b_e := \begin{cases}
        1, & e = 0,\\
        k_e + 1, & t_e = 0 \mbox{ and } e \neq 0,\\
        k_e + 2, & t_e \neq 0
    \end{cases} \qquad (e \in \mathbb{N}_0) .
\]
Let $\mathcal{C}_{n, e}$ be the set comprising the connected graphs of order $n$ and size $n - 1 + e$. Now, for each $e \in \mathbb{N}_0$ and $n \ge b_e$, let $\mathcal{D}_{n, e} \in \mathcal{C}_{n, e}$ be defined as
\[
    \mathcal{D}_{n, e} := \begin{cases}
        \left( \left( \left( K_{k_e - t_e} + K_1 \right) \lor K_{t_e} \right) + (n - k_e - 2) K_1 \right) \lor K_1, & n \ge k_e + 2,\\
        K_n, & n < k_e + 2 .
    \end{cases}
\]
Also, for each $e \in \mathbb{N}_0$ and $n \ge e + 2$, let $\mathcal{V}_{n, e} \in \mathcal{C}_{n, e}$ be defined as
\[
    \mathcal{V}_{n, e} := \left( K_{1, e} + (n - e - 2) K_1 \right) \lor K_1.
\]

In 1986, the spectral radius maximization problem on $\mathcal{C}_{n, e}$ was initiated  by Brualdi and Solheid, who obtained the next three results.

\begin{theorem}[\hspace{1sp}{\cite[Theorem~2.1]{brusol1986}}]\label{con_thresh_th}
    Let $e \in \mathbb{N}_0$ and $n \ge b_e$, and suppose that $G$ attains the maximum spectral radius on $\mathcal{C}_{n, e}$. Then $G$ is a threshold graph.
\end{theorem}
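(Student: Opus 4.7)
The plan is to argue by contradiction via a Rayleigh-quotient switching argument applied to the Perron eigenvector. Since $G$ is connected, $A(G)$ is irreducible, so by the Perron--Frobenius theorem there exists a unique (up to scaling) positive eigenvector $\mathbf{x} > \mathbf{0}$ satisfying $A(G)\mathbf{x} = \rho(G)\mathbf{x}$. Label the vertices $v_1, \ldots, v_n$ so that $x_{v_1} \ge x_{v_2} \ge \cdots \ge x_{v_n} > 0$.

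Suppose, towards a contradiction, that $G$ is not a threshold graph. Then with respect to this ordering $A(G)$ fails to be stepwise, so there exist indices $i<j$ and $k<\ell$ with $k\le i$, $\ell\le j$, $(k,\ell)\ne(i,j)$, such that $v_iv_j\in E(G)$ and $v_kv_\ell\notin E(G)$. Consider the candidate swap $G' := G - v_iv_j + v_kv_\ell$, which certainly preserves the order $n$ and the size $n-1+e$. Provided $G' \in \mathcal{C}_{n,e}$, the Rayleigh quotient computation
\[
    \rho(G') \;\ge\; \frac{\mathbf{x}^{\top} A(G')\mathbf{x}}{\mathbf{x}^{\top} \mathbf{x}} \;=\; \rho(G) + \frac{2\,(x_{v_k}x_{v_\ell} - x_{v_i}x_{v_j})}{\mathbf{x}^{\top} \mathbf{x}} \;\ge\; \rho(G)
\]
holds because $x_{v_k} \ge x_{v_i}$ and $x_{v_\ell} \ge x_{v_j}$. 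If the inequality is strict, maximality of $\rho(G)$ is contradicted immediately. In the boundary case $x_{v_k}x_{v_\ell} = x_{v_i}x_{v_j}$, one argues that $\mathbf{x}$ would then be a positive eigenvector of $A(G')$ for the eigenvalue $\rho(G)$; inspecting the component equations $(A(G')\mathbf{x})_{v_k}, (A(G')\mathbf{x})_{v_\ell}, (A(G')\mathbf{x})_{v_i}, (A(G')\mathbf{x})_{v_j}$ against the corresponding equations for $A(G)\mathbf{x}$ forces $x_{v_k} = x_{v_i}$ and $x_{v_\ell} = x_{v_j}$ and an incompatibility between these and the Perron characterisation, again producing a contradiction.

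The main obstacle is ensuring that $G'$ remains connected. If $v_iv_j$ is not a bridge of $G$, then $G - v_iv_j$ is connected and the swap trivially keeps it so. If $v_iv_j$ is a bridge, let $C_i$ and $C_j$ denote the two components of $G - v_iv_j$ containing $v_i$ and $v_j$ respectively. When $v_k$ and $v_\ell$ lie in different components of $G - v_iv_j$ the addition of $v_kv_\ell$ reconnects $G'$ and we are done. The residual case, in which $v_k$ and $v_\ell$ both lie in $C_i$ (or both in $C_j$), is the subtle one and must be eliminated by choosing the violation $(i,j,k,\ell)$ more carefully, or by replacing the single swap with a composition of two swaps.

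To handle this, my plan is to exploit the freedom in selecting the violating quadruple: among all quadruples witnessing the non-stepwise property of $A(G)$, pick one that is extremal in a convenient sense (for instance, minimising $j$ and then maximising $k$, or choosing the pair involving the highest-ranked non-edge). Using the hypothesis $n \ge b_e$ together with the structure of $\mathbf{x}$, the stepwise-violation at the chosen quadruple can then be realised either by a direct swap or by a two-step transformation -- first rotating an edge incident to $v_j$ into $C_i$ (which is possible since $C_j$ contains vertices other than $v_j$ when the bridge is essential), and then performing the desired main swap -- such that the cumulative effect keeps the graph connected and strictly increases $\mathbf{x}^{\top} A \mathbf{x}$. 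This contradicts the maximality of $\rho(G)$ on $\mathcal{C}_{n,e}$, and therefore $G$ must be a threshold graph.
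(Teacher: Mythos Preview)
The paper does not supply its own proof of this statement; it is quoted from Brualdi--Solheid, with an alternative proof by Simi\'c, Li~Marzi and Belardo also cited. Your Rayleigh-quotient switching framework is indeed the standard route, and your handling of the equality case is essentially correct.

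The genuine gap is the connectivity paragraph: it is a plan, not a proof. The suggested two-step transformation (``first rotating an edge incident to $v_j$ into $C_i$'') may well \emph{decrease} $\mathbf{x}^\top A\mathbf{x}$ at the first step, so the cumulative effect need not be an increase; and invoking ``the hypothesis $n\ge b_e$'' is a red herring, since that is merely the nonemptiness condition for $\mathcal C_{n,e}$. The clean way to close the gap is to prove first that the vertex $v_1$ of largest Perron entry has degree $n-1$: if $v_1\not\sim w$, take a shortest path from $w$ to $v_1$ in $G$, let $v$ be the neighbour of $w$ on it, and set $G' = G - wv + wv_1$. One checks directly that $G'$ is connected (reroute any walk through the new edge $wv_1$ or through the surviving sub-path from $v$ to $v_1$), and $x_{v_1}\ge x_v$ together with your own equality-case argument forces $\rho(G')>\rho(G)$, a contradiction. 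Once $\deg(v_1)=n-1$, every stepwise violation involves an edge $v_iv_j$ with $i,j\ge 2$, and deleting such an edge cannot disconnect $G$ (all vertices remain adjacent to $v_1$), so your original swap goes through and the proof is complete.
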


\begin{theorem}[\hspace{1sp}{\cite[Theorems~3.1 and 3.2]{brusol1986}}]\label{brusol_d_th}
    Let $e \in \{0, 1, 2, 3\}$ and $n \ge b_e$. Then $\mathcal{D}_{n, e}$ is the unique graph attaining the maximum spectral radius on $\mathcal{C}_{n, e}$.
\end{theorem}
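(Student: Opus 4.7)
The plan is to combine Theorem \ref{con_thresh_th} with an enumeration of small threshold graphs. Since any extremal $G \in \mathcal{C}_{n, e}$ is threshold, $G$ has a universal vertex $v$, and then $G - v$ is itself threshold, on $n - 1$ vertices and with $|E(G)| - (n - 1) = e$ edges. It therefore suffices to classify all threshold graphs of size at most $3$. Using that threshold graphs are $\{P_4, C_4, 2K_2\}$-free and that any connected graph of size at most $3$ has at most $4$ vertices, the complete list up to added isolated vertices is $(n-1)K_1$, $K_2$, $P_3$, and $\{K_3, K_{1,3}\}$ for sizes $0, 1, 2, 3$ respectively.

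For $e \in \{0, 1, 2\}$ the choice of $G - v$ is forced, so $G = \mathcal{D}_{n, e}$ is the unique candidate and the result follows immediately. For $e = 3$ and $n = 4$, the graph $\mathcal{D}_{4, 3} = K_4$ is the only member of $\mathcal{C}_{4, 3}$ at all. The real work is the case $e = 3$ and $n \ge 5$, where there are exactly two candidates: $\mathcal{D}_{n, 3}$ and $H := (K_{1, 3} + (n - 5)K_1) \lor K_1$.

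For the comparison $\rho(\mathcal{D}_{n, 3}) > \rho(H)$, I would exploit the equitable partitions of $\mathcal{D}_{n, 3}$ and $H$ induced by the automorphism orbits to obtain the reduced characteristic polynomials
\[
    p_1(\lambda) = \lambda^3 - 2\lambda^2 - (n - 1)\lambda + 2(n - 4), \qquad p_2(\lambda) = \lambda^4 - (n + 2)\lambda^2 - 6\lambda + 3(n - 5),
\]
whose largest roots are $\rho(\mathcal{D}_{n, 3})$ and $\rho(H)$, respectively. Setting $\rho := \rho(\mathcal{D}_{n, 3})$ and using $p_1(\rho) = 0$ to reduce $\rho^3$ and $\rho^4$, a direct calculation should collapse $p_2(\rho)$ to the clean expression $\rho^2 - (n - 1)$. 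Since $\mathcal{D}_{n, 3}$ is connected and strictly contains the spanning star $K_{1, n-1}$, strict monotonicity of $\rho$ under edge addition on connected graphs gives $\rho > \sqrt{n - 1}$, and hence $p_2(\rho) > 0$. To turn this into $\rho > \rho(H)$, I would apply Cauchy interlacing to $H = K_1 \lor (H - u)$: since $\rho(H - u) = \rho(K_{1, 3}) = \sqrt{3}$, one obtains $\lambda_2(H) \le \sqrt{3}$, while $\rho \ge \rho(K_4) = 3 > \sqrt{3}$. This rules out $\rho$ falling in a gap between the real roots of $p_2$, so $p_2(\rho) > 0$ forces $\rho > \rho(H)$.

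The main obstacle is the $e = 3$ comparison: executing the cubic-to-quartic reduction of $p_2(\rho)$ modulo $p_1(\rho) = 0$ to secure the transparent identity $p_2(\rho) = \rho^2 - (n - 1)$, and then using the interlacing step to convert positivity of $p_2$ at $\rho$ into the desired strict spectral inequality. Once this identity is in hand, the argument closes essentially by comparing $\rho(\mathcal{D}_{n, 3})$ to the easy lower bound $\sqrt{n - 1}$; the cases $e \in \{0, 1, 2\}$ require no spectral estimates at all, being forced by the threshold-graph enumeration.
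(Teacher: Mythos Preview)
The paper does not prove Theorem~\ref{brusol_d_th}; it is stated as a known result of Brualdi and Solheid~\cite{brusol1986} and cited without proof. Your argument therefore cannot be compared against a proof in the paper, but it stands on its own as a correct self-contained proof.

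A few remarks. The computations you outline do work: from $p_1(\rho)=0$ one obtains $\rho^4=(n+3)\rho^2+6\rho-4(n-4)$, and substituting gives $p_2(\rho)=\rho^2-(n-1)$ exactly as claimed; the interlacing step then finishes cleanly since every root of $p_2$ other than $\rho(H)$ is an eigenvalue of $H$ and hence at most $\lambda_2(H)\le\sqrt{3}<3\le\rho$. One minor technical point: for $n=5$ the fourth cell of your partition of $H$ is empty, so $p_2$ is not literally a quotient characteristic polynomial there; but this boundary case is easily checked directly (indeed $H\cong K_2\lor 3K_1$ with $\rho(H)=3<\rho(\mathcal{D}_{5,3})$), or one simply notes that the extra root of $p_2$ at $n=5$ is $0$, which does not interfere with the argument. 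It is also worth observing that your competitor $H$ is precisely $\mathcal{V}_{n,3}$, so the $e=3$ comparison you carry out is the small-$e$ analogue of the $\mathcal{D}_{n,e}$ versus $\mathcal{V}_{n,e}$ comparison that the paper develops for $e\ge 4$ in Section~4.
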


\begin{theorem}[\hspace{1sp}{\cite[Theorem~3.3]{brusol1986}}]\label{brusol_v_th}
    For each $e \in \{ 4, 5, 6 \}$, there is an $f(e)$ such that for every $n \ge f(e)$, the graph $\mathcal{V}_{n, e}$ is the unique graph attaining the maximum spectral radius on $\mathcal{C}_{n, e}$.
\end{theorem}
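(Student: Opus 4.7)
The plan is to apply Theorem~\ref{con_thresh_th} to restrict attention to threshold graphs in $\mathcal{C}_{n,e}$, parametrize them by a small ``core'' graph, and then isolate the first $H$-dependent correction to $\rho(G)^2$ in a $1/n$ expansion.

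Any connected threshold graph $G \in \mathcal{C}_{n,e}$ has a dominating vertex $v$, and the subgraph induced on $V(G)\setminus\{v\}$ splits as $H + sK_1$ with $H$ its non-isolated part. Thus $G = G_H := K_1 \lor (H + sK_1)$, where $s = n - 1 - |V(H)|$ and $H$ is a threshold graph with $e$ edges and minimum degree $\ge 1$ (hence connected, with $|V(H)| \le 2e$). The set of admissible cores $H$ is therefore finite for each $e$, with $\mathcal{V}_{n,e}$ corresponding to $H = K_{1,e}$. Writing the Perron eigenvector of $G_H$ as $x_v$ at $v$, $\mathbf{x}_H$ on $V(H)$, and common value $x_p = x_v/\rho$ at each pendant, one obtains $(\rho I - A_H)\mathbf{x}_H = x_v\mathbf{1}$, and substituting into the eigenvalue equation at $v$ yields
\[
\rho(G_H)^2 - s \;=\; \rho\,\mathbf{1}^{\top}(\rho I - A_H)^{-1}\mathbf{1} \;=\; \sum_{k \ge 0}\frac{\mathbf{1}^{\top}A_H^{k}\mathbf{1}}{\rho^{k}}, \qquad \rho = \rho(G_H),
\]
valid once $\rho > \rho(H)$; since $\rho(G_H) \ge \sqrt{n-1}$ and $\rho(H) \le \sqrt{2e}$, this holds for $n > 2e+1$.

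Evaluating the first walk counts $\mathbf{1}^{\top}\mathbf{1} = |V(H)|$, $\mathbf{1}^{\top}A_H\mathbf{1} = 2e$, $\mathbf{1}^{\top}A_H^{2}\mathbf{1} = \sum_{u}d_u^2$, using $s + |V(H)| = n-1$, and feeding the crude estimate $\rho^2 = (n-1) + O(n^{-1/2})$ back into the right-hand side, one extracts
\[
\rho(G_H)^2 \;=\; (n-1) + \frac{2e}{\sqrt{n-1}} + \frac{\sum_{u\in V(H)}d_u^2}{n-1} + O(n^{-3/2}),
\]
in which only the coefficient $\sum d_u^2$ depends on $H$. Writing $\sum_u d_u^2 = 2e + 2p_2(H)$ with $p_2(H)$ the number of length-two paths in $H$, and observing that any two distinct edges of a simple graph share a vertex in at most one way, gives $p_2(H) \le \binom{e}{2}$; a short case check shows that for $e \ge 4$ this inequality is strict unless all edges of $H$ meet at a common vertex, which within our family pins down $H = K_{1,e}$ uniquely, giving $\sum d_u^2 = e^2 + e$.

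Consequently, for every other admissible $H$ the difference $\rho(\mathcal{V}_{n,e})^2 - \rho(G_H)^2 = \Theta(1/n)$ is strictly positive once $n$ is sufficiently large, and taking the maximum such threshold over the finite family of cores produces $f(e)$. The main technical nuisance is making the $O(n^{-3/2})$ tail uniform in $H$ so as to certify an explicit $f(e)$; a clean way is to truncate the Neumann series $(\rho I - A_H)^{-1} = \rho^{-1}\sum_{k\ge 0}(A_H/\rho)^k$ at a fixed order and bound the remainder via $\rho(H) \le \sqrt{2e}$ and $|V(H)| \le 2e$, which reduces the error to a function of $e$ alone.
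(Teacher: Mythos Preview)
This theorem is quoted from \cite{brusol1986} and is not proved in the present paper, so there is no ``paper's own proof'' to match against. That said, your argument is correct as a sketch: the reduction to a finite family of cores $H$ is exactly the paper's T\nobreakdash-subgraph device (your $H$ is $\mathcal{T}(G)$), the eigen-equation $\rho^2-s=\rho\,\mathbf{1}^\top(\rho I-A_H)^{-1}\mathbf{1}$ is derived correctly, and the identification of $\sum_u d_u^2$ as the first $H$-dependent coefficient, together with the sharp bound $p_2(H)\le\binom{e}{2}$ with equality only at $K_{1,e}$ for $e\ge 4$, is sound. The one place that would need a line of care in a full write-up is the bootstrap: when you replace $1/\rho$ by $1/\sqrt{n-1}$ in the $2e/\rho$ term, the resulting error must be shown to be $O(n^{-3/2})$ uniformly in $H$ (it is, since $\rho-\sqrt{n-1}=O(n^{-1})$ with constants depending only on $e$), so that the $H$-independent $2e/\rho$ contributions genuinely cancel to that order when you subtract.

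By contrast, the machinery the paper develops for results of this type (Lemmas~\ref{poly_2_lemma}--\ref{comp_lemma_1} and Theorem~\ref{last_th}) is algebraic rather than asymptotic: one encodes $\rho(G)$ as the largest root of $\lambda\,\mathcal{P}_{\mathcal{T}_1(G)}(\lambda)-(n-n')\,\mathcal{P}_{\mathcal{T}(G)}(\lambda)$, compares two candidates through the auxiliary polynomial $\mathcal{Q}_{G,\mathcal{V}_{n,e}}$, and reads off an explicit threshold on $n$ via the monotone rational map $\mathcal{R}_{\mathcal{V}_{n,e}}$. Your expansion explains \emph{why} $K_{1,e}$ wins (it maximises the second walk count $\mathbf{1}^\top A_H^2\mathbf{1}$), and is arguably the cleaner route to mere existence of $f(e)$; the paper's route trades that transparency for effective, computable bounds on $f(e)$.
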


Theorem \ref{con_thresh_th} was later proved by Simić, Li Marzi and Belardo \cite{silimabe2004} with a different strategy. For alternative proofs of Theorem \ref{brusol_d_th} for the case $e = 0$, see \cite{cosi1957} and \cite{lope1973}. In 1988, Theorem \ref{brusol_v_th} was generalized by Cvetkovi\'c and Rowlinson as follows.

\begin{theorem}[\hspace{1sp}{\cite{cvetrow1988}}]\label{cvetrow_v_th}
    For each $e \ge 4$, there is an $f(e)$ such that for every $n \ge f(e)$, the graph $\mathcal{V}_{n, e}$ is the unique graph attaining the maximum spectral radius on $\mathcal{C}_{n, e}$.
\end{theorem}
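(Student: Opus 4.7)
The plan is to apply Theorem~\ref{con_thresh_th} to restrict attention to connected threshold graphs in $\mathcal{C}_{n,e}$, and then exploit the rigid structure of such graphs to pin down the extremal member for large $n$. Every connected threshold graph has a universal vertex (the last vertex in its creation sequence is dominating), so any candidate extremal $G \in \mathcal{C}_{n,e}$ decomposes as $G = H \lor K_1$, where $H$ is a (possibly disconnected) threshold graph on $n - 1$ vertices with exactly $e$ edges; the class of threshold graphs is closed under vertex deletion, so $H$ is still threshold. The task reduces to this: among threshold graphs $H$ of order $n - 1$ and size $e$, which one maximises $\rho(H \lor K_1)$? I would aim to prove that the unique maximiser is $H_0 := K_{1,e} + (n - e - 2)\,K_1$, and hence $G \cong \mathcal{V}_{n,e}$.

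To compare candidates I would use the Schur-complement identity
\[
    \phi_{H \lor K_1}(\lambda) \;=\; \phi_H(\lambda)\bigl(\lambda - s_H(\lambda)\bigr), \qquad s_H(\lambda) \;:=\; \mathbf{1}^{\top}(\lambda I - A(H))^{-1}\mathbf{1},
\]
valid whenever $\lambda > \rho(H)$, so that $\rho(H \lor K_1)$ is the largest root of $\lambda = s_H(\lambda)$. The bounds $\rho(H)^2 \le \operatorname{tr} A(H)^2 = 2e$ and $\rho(H \lor K_1) \ge \sqrt{n - 1}$ (the join contains $K_{1,n-1}$ as a spanning subgraph) guarantee that the extremal $\lambda$ lies safely beyond $\rho(H)$ once $n$ is large relative to $e$. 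Expanding as a Neumann series,
\[
    s_H(\lambda) \;=\; \frac{n - 1}{\lambda} + \frac{2e}{\lambda^{2}} + \frac{\sum_{v} d_H(v)^{2}}{\lambda^{3}} + O(\lambda^{-4}),
\]
and since the first two coefficients depend only on $n$ and $e$, the first structural invariant that distinguishes between candidates is $\sum_{v} d_H(v)^{2}$; the sign of the comparison is governed by how large this quantity is. A standard convexity argument shows that $\sum_{v} d_H(v)^{2}$ is maximised over all graphs with $e$ edges by the star $K_{1,e}$, attaining the value $e^{2} + e$, and since $K_{1,e}$ is itself a threshold graph this heuristic points to $H_0$.

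The remaining step is quantitative: I must prove that, for $n$ sufficiently large, the advantage of $H_0$ in the $\sum_{v} d_H(v)^{2}$ coefficient dominates every higher-order contribution from the Neumann expansion, so that $\rho(\mathcal{V}_{n,e}) > \rho(H \lor K_1)$ for every threshold $H \not\cong H_0$ of order $n - 1$ with $e$ edges. For $\mathcal{V}_{n,e}$ itself one obtains $\rho(\mathcal{V}_{n,e})$ explicitly as the Perron root of the $4 \times 4$ equitable-quotient matrix arising from the partition into the universal vertex, the star centre, the $e$ star leaves and the $n - e - 2$ pendants. For each competitor one writes $H = H^{\circ} + t\,K_1$, where the \emph{core} $H^{\circ}$ has no isolated vertex and consequently at most $2e$ vertices; the cores range over a finite list that depends only on $e$, and for each such core an explicit upper bound on $\rho((H^{\circ} + t K_1) \lor K_1)$ can be derived and compared against $\rho(\mathcal{V}_{n,e})$. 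The main obstacle is precisely to make this comparison uniform across the finite list of cores and to extract from it a single threshold $f(e)$ that defeats every competitor simultaneously; this uniform asymptotic reasoning is the technical heart of the argument.
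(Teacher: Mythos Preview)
The paper does not supply its own proof of Theorem~\ref{cvetrow_v_th}; it is quoted from \cite{cvetrow1988} as background. What the paper \emph{does} prove is the sharper Theorem~\ref{last_th}, which yields the same qualitative conclusion (for $t_e\ge 1$) with an explicit threshold on $n$. So the natural comparison is between your proposal and the argument for Theorem~\ref{last_th}.

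Your route and the paper's are genuinely different. You remove the universal vertex, reduce to maximising $\rho(H\lor K_1)$ over threshold graphs $H$ of fixed size, and distinguish candidates via the Neumann expansion of $s_H(\lambda)=\mathbf 1^\top(\lambda I-A(H))^{-1}\mathbf 1$, with $\sum_v d_H(v)^2$ as the first separating invariant; the finiteness of cores then reduces everything to a uniform asymptotic estimate. The paper instead keeps both graphs intact and uses the Rowlinson bilinear identity $x^\top(C-A)z=(\chi-\rho)\,x^\top z$ for the Perron vectors of the competitor $G$ and of $\mathcal V_{n,e}$, bounding the positive and negative contributions entrywise via structural inequalities on the Perron vector (Lemmas~\ref{gnm_lemma}--\ref{hnm_lemma}) to force $\chi\le \ell_e$, contradicting the explicit formula for $\mathcal R_{\mathcal V_{n,e}}$ from Section~\ref{sc_tsub}. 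Your approach is more conceptual and explains \emph{why} the star core wins; the paper's approach is more direct and delivers an explicit, reasonably small bound on $n$ without ever enumerating cores.

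One caution about your plan: the assertion that $K_{1,e}$ is the \emph{unique} maximiser of $\sum_v d_H(v)^2$ among size-$e$ graphs is not quite sharp enough on its own (for $e=3$ the triangle ties the star), so for $e\ge 4$ you will need either to check that the tie cannot recur, or to push the expansion one order further for any core achieving equality. You flag the uniform tail estimate as the technical heart, and that is accurate; carrying it out cleanly is exactly where the work lies, and it is the step your proposal leaves open.
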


The value $f(e)$ from Theorem \ref{cvetrow_v_th} is not trivial to compute and it is larger than $e^2 (e + 2)^2$ for $e \ge 7$. In 1991, Bell solved the spectral radius maximization problem on $\mathcal{C}_{n, e}$ for the case $t_e = 0$.

\begin{theorem}[\hspace{1sp}{\cite{bell1991}}]\label{bell_th}
    For any $\lambda > 3$, let
    \[
        f(\lambda) = \frac{1}{2} (\lambda + 1)(\lambda + 6) + 7 + \frac{32}{\lambda - 3} + \frac{16}{(\lambda - 3)^2}.
    \]
    Then for any $e \ge 6$ such that $t_e = 0$, we have:
    \begin{enumerate}[label=\textbf{(\roman*)}]
        \item if $b_e \le n < f(k_e)$, then $\mathcal{D}_{n, e}$ is the unique graph attaining the maximum spectral radius on $\mathcal{C}_{n, e}$;
        \item if $n = f(k_e)$, then $\mathcal{D}_{n, e}$ and $\mathcal{V}_{n, e}$ are the only two graphs attaining the maximum spectral radius on $\mathcal{C}_{n, e}$;
        \item if $n > f(k_e)$, then $\mathcal{V}_{n, e}$ is the unique graph attaining the maximum spectral radius on $\mathcal{C}_{n, e}$.
    \end{enumerate}
\end{theorem}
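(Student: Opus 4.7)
By Theorem~\ref{con_thresh_th}, any graph attaining the maximum of $\rho$ on $\mathcal{C}_{n,e}$ must be a connected threshold graph, so the task reduces to comparing the spectral radii of the finitely many connected threshold graphs in $\mathcal{C}_{n,e}$. I would split the argument into two parts: (A) ruling out every connected threshold graph in $\mathcal{C}_{n,e}$ other than $\mathcal{D}_{n,e}$ and $\mathcal{V}_{n,e}$, and (B) comparing $\rho(\mathcal{D}_{n,e})$ with $\rho(\mathcal{V}_{n,e})$ to pinpoint the crossover at $n = f(k_e)$.

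For part (B), I would exploit that when $t_e = 0$ both candidates are cones $H \lor K_1$ over graphs with $n-1$ vertices and $e = \binom{k_e}{2}$ edges. The base of $\mathcal{D}_{n,e}$ is $K_{k_e} + (n-k_e-1)K_1$, yielding a $3$-cell equitable partition (clique / isolated vertices / apex); the base of $\mathcal{V}_{n,e}$ is $K_{1,e} + (n-e-2)K_1$, yielding a $4$-cell partition (star centre / leaves / isolated vertices / apex). The spectral radius of each graph is the Perron root of the corresponding quotient matrix, so $\rho(\mathcal{D}_{n,e})$ satisfies the cubic
\begin{equation*}
(\rho - k_e + 1)(\rho^2 - n + k_e + 1) = k_e \rho,
\end{equation*}
and $\rho(\mathcal{V}_{n,e})$ satisfies an analogous quartic whose coefficients are polynomial in $n$ and $e$. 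Equating the two Perron roots and eliminating $\rho$ should produce, after algebraic manipulation, the equation $n = f(k_e)$ in the claimed closed form. The three subcases then follow from the fact that $\rho(\mathcal{V}_{n,e}) - \rho(\mathcal{D}_{n,e})$ is strictly monotone in $n$ for fixed $k_e$, which can be verified by implicit differentiation in the two defining equations.

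Part (A) is the heart of the proof. Any connected threshold graph in $\mathcal{C}_{n,e}$ is encoded by a binary creation sequence ending in a dominating step, whose counts of $0$'s and $1$'s are constrained by $n$ and by $e = \binom{k_e}{2}$. I would establish a Kelmans-type monotonicity: any such sequence other than the two extremal ones producing $\mathcal{D}_{n,e}$ and $\mathcal{V}_{n,e}$ admits a local swap preserving $n$ and the edge count while strictly increasing $\rho$, with the sign of the change controlled via the Perron eigenvector and the Rayleigh quotient. This swap should exhibit a dichotomy, either merging dominating operations (pushing toward $\mathcal{D}_{n,e}$) or spreading them apart (pushing toward $\mathcal{V}_{n,e}$), so that a discrete gradient argument drives every intermediate candidate to one of the two targets.

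The main obstacle I anticipate is part (A): because $\mathcal{D}_{n,e}$ and $\mathcal{V}_{n,e}$ sit at opposite ends of the space of creation sequences and are both local maxima, no single monotone swap targets both, forcing a delicate case analysis on the structure of the sequence. Verifying the strict sign of the change in $\rho$ in each case relies on tight estimates of the Perron-eigenvector entries, extracted from the eigenvalue equations at the apex and at its neighbours. Combined with the algebraic elimination that yields the precise closed form of $f$, this is where the bulk of the computation lies.
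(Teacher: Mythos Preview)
The paper does not contain a proof of this theorem: it is stated as a result of Bell \cite{bell1991} and simply cited. There is therefore no in-paper argument to compare your proposal against. What the paper does do, in the proof of Proposition~\ref{comparison_prop}, is invoke \cite[Lemma~1]{bell1991} to handle the $t_e=0$ comparison between $\mathcal{D}_{n,e}$ and $\mathcal{V}_{n,e}$, which corresponds to your part~(B); so Bell's original argument for that comparison is presumably close in spirit to the quotient-matrix computation you sketch.

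On the substance of your plan: part~(B) is credible and essentially mechanical once the quotient polynomials are written down, though you should be aware that the monotonicity of $\rho(\mathcal{V}_{n,e})-\rho(\mathcal{D}_{n,e})$ in $n$ is not quite what one proves; rather (as in the paper's Proposition~\ref{comparison_prop} for general $t$) one shows that the sign of $\chi-\gamma$ is governed by whether $\gamma$ lies above or below a fixed threshold $\psi_e$, and then translates this into a threshold on $n$ via the monotonicity of $\gamma$ in $n$. Part~(A) is where your proposal is genuinely incomplete. The ``Kelmans-type swap'' idea is appealing but, as you yourself note, there is no single monotone move because $\mathcal{D}_{n,e}$ and $\mathcal{V}_{n,e}$ are both local maxima; you would need to prove that every intermediate threshold graph is beaten by at least one of them, and a discrete-gradient argument alone does not give that. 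Bell's actual proof (and the paper's own machinery in Section~\ref{sc_tsub} for $t\ge 1$) instead compares an arbitrary candidate $G$ directly to $\mathcal{D}_{n,e}$ and to $\mathcal{V}_{n,e}$ via the bilinear form $x^\intercal(B-A)y$ with the two Perron vectors, extracting explicit inequalities on $\rho(G)$ in terms of $n$ and $k$. Your sketch does not yet contain the mechanism that makes such a two-sided comparison work, and without it part~(A) remains a hope rather than an argument.
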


In 2002, Olesky, Roy and van den Driessche resolved another case of the extremal problem.

\begin{theorem}[\hspace{1sp}{\cite{oleroydri2002}}]
    Let $e \ge 9$ be such that $t_e = k_e - 1$ and let $b_e + 1 \le n \le e - 1$. Then $\mathcal{D}_{n, e}$ is the unique graph attaining the maximum spectral radius on $\mathcal{C}_{n, e}$.
\end{theorem}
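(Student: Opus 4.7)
The plan is to combine Theorem~\ref{con_thresh_th} with Theorem~\ref{row_th}. By Theorem~\ref{con_thresh_th}, any maximiser $T$ of $\rho$ on $\mathcal{C}_{n, e}$ must be a connected threshold graph, hence has a dominating vertex; peeling it off yields $T = K_1 \lor G$ for some threshold graph $G$ on $n - 1$ vertices with exactly $e$ edges. The hypothesis $t_e = k_e - 1$ amounts to $e = \binom{k_e + 1}{2} - 1$, and since $n \ge b_e + 1 = k_e + 3$ gives $n - 1 \ge k_e + 2$, Theorem~\ref{row_th} identifies the unique graph in $\mathcal{G}_e$ on $n - 1$ vertices attaining the maximum spectral radius as
\[
    G^{\star} \;:=\; (K_{k_e - 1} \lor 2\, K_1) + (n - k_e - 2)\, K_1;
\]
a short direct unpacking then verifies $K_1 \lor G^{\star} = \mathcal{D}_{n, e}$.

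The core remaining step is to show that, among threshold graphs $G$ on $n - 1$ vertices with $e$ edges, $\rho(K_1 \lor G)$ is uniquely maximised at $G = G^{\star}$. My approach is to normalise the Perron eigenvector of $K_1 \lor G$ so that the dominating vertex carries value $1$; the restriction $y$ to $V(G)$ then satisfies $(\lambda I - A(G))\, y = \mathbf{1}$, yielding the scalar fixed-point equation
\[
    \lambda \;=\; \mathbf{1}^{T} (\lambda I - A(G))^{-1} \mathbf{1} \;=\; \sum_{k \ge 0} W_k(G)\, \lambda^{-k-1}, \qquad \lambda := \rho(K_1 \lor G),
\]
where $W_k(G) := \mathbf{1}^{T} A(G)^k \mathbf{1}$ counts walks of length $k$ in $G$. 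Since the right-hand side is strictly decreasing in $\lambda$ on $(\rho(G), \infty)$, the desired strict inequality $\rho(K_1 \lor G) < \rho(K_1 \lor G^{\star})$ would follow from pointwise walk-count domination $W_k(G) \le W_k(G^{\star})$ for every $k \ge 0$, with strict inequality for some $k$ whenever $G \ne G^{\star}$.

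To establish the walk-count domination, I plan an iterative Kelmans-type shift: every threshold $G \ne G^{\star}$ admits an edge rearrangement between suitable twin classes that preserves the threshold property and the vertex/edge totals, weakly increases each $W_k$, and strictly increases at least one $W_k$; iteration must terminate at $G^{\star}$ by the uniqueness delivered by Theorem~\ref{row_th}. The main obstacle will be verifying the walk-count monotonicity at each shift, which is more delicate than the classical spectral-radius monotonicity under Kelmans shifts and typically requires either a careful bijective pairing of walks or a case analysis on the local structure around the rearranged edges; an alternative route would bypass $W_k$ entirely and invoke Kelmans' theorem directly at the $K_1 \lor \cdot$ level, using that a shift on $G$ lifts to a shift on $T$. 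Finally, the hypothesis $n \le e - 1$ plays a structural role by excluding the rival threshold $K_{1, e} + (n - e - 2)\, K_1$ (which would require $n \ge e + 2$) from the candidate space, and this is exactly why $\mathcal{V}_{n, e}$ does not compete with $\mathcal{D}_{n, e}$ in this regime.
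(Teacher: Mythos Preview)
This theorem is not proved in the paper; it is quoted from Olesky, Roy and van den Driessche \cite{oleroydri2002} as part of the literature review in the introduction, with no argument supplied. So there is no ``paper's own proof'' to compare your proposal against. What I can do is assess whether your plan would actually yield a proof.

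Your reduction is sound: a connected threshold graph in $\mathcal{C}_{n,e}$ has a dominating vertex, so $T=K_1\lor G$ with $G$ a (not necessarily connected) threshold graph on $n-1$ vertices and $e$ edges, and the fixed-point identity $\lambda=\mathbf 1^{\mathsf T}(\lambda I-A(G))^{-1}\mathbf 1$ with the walk-count expansion is correct on $(\rho(G),\infty)$. Your identification $K_1\lor G^\star=\mathcal D_{n,e}$ is also correct.

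The genuine gap is the central claim that $W_k(G)\le W_k(G^\star)$ for all $k$ and all admissible threshold $G$. This is not a consequence of any standard Kelmans-type statement, and your ``alternative route'' of lifting a shift on $G$ to a shift on $T$ does not help: classical Kelmans compressions already stabilise threshold graphs (in a threshold graph the closed neighbourhoods are nested, so one direction of the compression is trivial and the other leaves the threshold class), hence they cannot compare two distinct threshold graphs with the same order and size. What distinguishes threshold graphs in Rowlinson's argument is not a Kelmans compression but a targeted edge \emph{rotation} justified via Perron-vector inequalities, and there is no reason those same rotations are walk-count monotone. In fact the walk-count domination you want is \emph{false} without the restriction on $n$: for the star $G=K_{1,e}$ on $e+1$ vertices one has $W_2(G)=e(e+1)$, whereas $W_2(G^\star)=(k_e-1)(k_e^2+2k_e-2)$, and for $t_e=k_e-1$, $k_e\ge 4$ the former is strictly larger. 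This is exactly why $\mathcal V_{n,e}$ eventually overtakes $\mathcal D_{n,e}$, and it shows that your proposed monotonicity cannot hold for all $k$ uniformly in $G$; it can only hold, if at all, because the bound $n\le e-1$ prunes the star-like competitors from the candidate set. Your sketch acknowledges that $n\le e-1$ excludes $K_{1,e}$ itself, but it does not explain how the Kelmans-type iteration would \emph{use} $n\le e-1$ to block every intermediate threshold graph whose degree sequence is skewed toward a star. Until that mechanism is made explicit, the argument is a plan rather than a proof; the likely fix is to abandon walk counts and instead run Rowlinson-style rotations directly on $K_1\lor G$, comparing Perron-vector entries of $T$ rather than of $G$.
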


Bearing in mind all the existing results, it is natural to expect that no graph besides $\mathcal{D}_{n, e}$ and $\mathcal{V}_{n, e}$ (if defined) can be a solution to the spectral radius maximization problem on $\mathcal{C}_{n, e}$, for any $e \in \mathbb{N}_0$ and $n \ge b_e$. Here, we investigate this extremal problem and resolve some new cases. To begin, we introduce the auxiliary notion of T-subgraph and use it to construct a computer-assisted proof of the following proposition.

\begin{proposition}\label{computer_prop}
    Let $e \in \{4, 5, \ldots, 130 \}$ and $n \ge b_e$. Suppose that $G$ attains the maximum spectral radius on $\mathcal{C}_{n, e}$. Then $G \cong \mathcal{D}_{n, e}$ or ($n \ge e + 2$ and $G \cong \mathcal{V}_{n, e}$).
\end{proposition}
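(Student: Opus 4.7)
The plan is to combine the structural reduction from Theorem \ref{con_thresh_th} with a finite enumeration made tractable by the T-subgraph construct. By Theorem \ref{con_thresh_th}, every candidate extremal graph is a connected threshold graph, which can be encoded by its creation sequence. Equivalently, such a graph admits a canonical equitable partition into twin classes, and its spectral radius coincides with that of the associated quotient matrix, whose entries depend on the inter-class adjacencies and whose diagonal contribution scales with the class multiplicities.

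The key idea is that for $e$ bounded, there are only finitely many ``shapes'' of threshold graphs in $\mathcal{C}_{n,e}$ as $n$ varies: each T-subgraph is obtained by collapsing every twin class to a single representative vertex, yielding a threshold graph of bounded order, since a creation sequence producing $n-1+e$ edges on $n$ vertices cannot contain too many dominating blocks without exhausting the edge budget $e$. For each fixed $e \in \{4, \ldots, 130\}$, I would enumerate all T-subgraphs by computer, recording for each one the linear Diophantine constraints imposed on the twin-class multiplicities $n_1, \ldots, n_s$ by $\sum_i n_i = n$ and by the total edge count $n - 1 + e$. Every connected threshold graph in $\mathcal{C}_{n,e}$ then arises in exactly one of these classes, and the extremal problem becomes a comparison of spectral radii of quotient matrices across this finite family.

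The core verification is to show, for each $e$ and each enumerated T-subgraph $T$ not corresponding to $\mathcal{D}_{n,e}$ or $\mathcal{V}_{n,e}$, that $\rho(G) < \max\{\rho(\mathcal{D}_{n,e}), \rho(\mathcal{V}_{n,e})\}$ for every admissible graph $G$ with T-subgraph $T$. This reduces to comparing the characteristic polynomials of the quotient matrices involved, which are polynomials in $\lambda$ whose coefficients are polynomial in the multiplicities and hence in $n$; in particular, the spectral inequality can be certified via a sign statement about the value of the $\mathcal{D}_{n,e}$- or $\mathcal{V}_{n,e}$-characteristic polynomial at the Perron root of the competitor $G$.

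The main obstacle is that $n$ ranges over an infinite set, so each such inequality must be established uniformly in $n \ge b_e$. I would handle this by reducing each comparison, using the linear constraints to eliminate all but one free parameter, to a univariate polynomial inequality in $n$ of bounded degree whose coefficients depend only on $e$ and $T$, and then discharging it by interval arithmetic for the finitely many small values of $n$ together with an explicit sign analysis of the leading coefficient for all sufficiently large $n$. Automating this certification over all $e \le 130$ and all enumerated T-subgraphs yields the computer-assisted proof of the proposition.
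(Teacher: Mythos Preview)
Your high-level strategy---reduce to threshold graphs via Theorem~\ref{con_thresh_th}, enumerate the finitely many T-subgraph shapes for each fixed $e$, and compare each competitor's spectral radius against those of $\mathcal{D}_{n,e}$ and $\mathcal{V}_{n,e}$---matches the paper's. The gap is in how you propose to handle the infinite range of $n$. You claim the comparison reduces to ``a univariate polynomial inequality in $n$ of bounded degree,'' but no such polynomial exists in the form you describe: the Perron roots $\rho(G)$, $\gamma$, $\chi$ are algebraic, not polynomial, functions of $n$, so evaluating one characteristic polynomial at the Perron root of another does not produce a polynomial in $n$. Your fallback of ``interval arithmetic for the finitely many small $n$ plus leading-coefficient sign analysis for large $n$'' then has no footing: without a polynomial there is no leading coefficient to analyze, and without an independent a~priori cutoff the ``small $n$'' range is unbounded. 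The only available cutoff, from Theorem~\ref{cvetrow_v_th}, exceeds $e^2(e+2)^2$, which for $e=130$ gives on the order of $10^8$ values of $n$ per T-subgraph---not what your proposal envisions.

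The paper resolves this by working in $\lambda$ rather than $n$. Because the characteristic equation from Lemma~\ref{poly_2_lemma} is \emph{linear} in $n$, one can eliminate $n$ between the equations for $G$ and for $\mathcal{D}_{n,e}$ (resp.\ $\mathcal{V}_{n,e}$) to obtain a comparison polynomial $\mathcal{Q}_{G,\mathcal{D}_{n,e}}(\lambda)$ (resp.\ $\mathcal{Q}_{G,\mathcal{V}_{n,e}}(\lambda)$) that is independent of $n$ and whose sign at $\gamma$ (resp.\ $\chi$) decides the spectral inequality. The second key ingredient, Lemma~\ref{rat_lemma}, shows that $\mathcal{R}_G(\lambda)=\lambda\,\mathcal{P}_{\mathcal{T}_1(G)}(\lambda)/\mathcal{P}_{\mathcal{T}(G)}(\lambda)$ is a strictly increasing bijection from $[\rho(\mathcal{T}_1(G)),\infty)$ onto $[0,\infty)$; hence $\gamma$ and $\chi$ are monotone in $n$, and the sign condition on $\mathcal{Q}$ translates into an explicit threshold in $n$. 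For each competitor T-subgraph this yields a value $n_U$ with $\gamma>\rho(G)$ for all $n<n_U$ and a value $n_L$ with $\chi>\rho(G)$ for all $n>n_L$; a single numerical check that $n_U>n_L$ then disposes of all $n$ simultaneously. This monotonicity-plus-overlapping-thresholds mechanism is the missing idea in your proposal.
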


The computer-assisted proof is carried out through a \texttt{SageMath} script that can be found in \cite{GitHub}. Afterwards, we compare the graphs $\mathcal{D}_{n, e}$ and $\mathcal{V}_{n, e}$ and give a criterion on $n$ and $e$ that determines which of these two graphs has a larger spectral radius. This result yields the complete solution to the extremal problem for the case $e \le 130$. Finally, we apply a strategy inspired by Bell \cite{bell1991} to show that $\mathcal{V}_{n, e}$ is the unique solution when $e > 130$ and $n \ge e + 2 + 13\sqrt{e}$, thereby improving the bound from Theorem \ref{cvetrow_v_th}.

\section{Preliminaries}

We consider all spectral properties of a graph to correspond to its adjacency matrix. For any threshold graph $G$, we assume that $V(G) = \{1, 2, \ldots, n \}$, where $n = |V(G)|$, with the vertices being arranged in such a way that $A(G)$ is a stepwise matrix. Note that the stepwise adjacency matrix $A(G)$ is unique for each threshold graph $G$. For any $p \in \mathbb{N}_0$, we denote the identity matrix of order $p$ by $I_p$, and for any $p_1, p_2 \in \mathbb{N}_0$, we denote the zero (resp.\ all-ones) matrix with $p_1$ rows and $p_2$ columns by $O_{p_1, p_2}$ (resp.\ $J_{p_1, p_2}$). When the matrix size is clear from the context, we may drop the subscripts and write $I$, $O$ or $J$ for short.

We take all the polynomials and rational functions to be in the variable $\lambda$. We use $\mathcal{P}_G(\lambda)$ and $\mathcal{P}_A(\lambda)$ (resp.\ $\rho(G)$ and $\rho(A)$) to denote the characteristic polynomial (resp.\ spectral radius) of a graph $G$ and square matrix $A$, respectively. Also, we use $\rho(P)$ and $\rho_2(P)$ to denote the largest and second largest root of a polynomial $P(\lambda) \not\equiv 0$, taking into account root multiplicity. For convenience, we let $\rho(P) = -\infty$ if $P(\lambda)$ has no real root, and $\rho_2(P) = -\infty$ if $P(\lambda)$ has at most one (simple) real root.

Throughout the rest of the paper, for the sake of brevity, we write $k := k_e$, $t := t_e$ and $b := b_e$ when the value $e$ is fixed. We follow Bell \cite{bell1991} in denoting the adjacency matrix of $\mathcal{D}_{n, e}$ by $B$. Also, let $y$ be the corresponding Perron vector of $\mathcal{D}_{n, e}$ and let $\gamma = \rho(\mathcal{D}_{n, e})$. It is not difficult to observe that $y_1 \ge y_2 \ge \cdots \ge y_n > 0$; see \cite[Lemma~1]{rowlinson1988}. For each $e \in \mathbb{N}_0$ and $n \ge e + 2$, we denote the adjacency matrix of $\mathcal{V}_{n, e}$ by $C$. Besides, let $z$ be the corresponding Perron vector of $\mathcal{V}_{n, e}$ and let $\chi = \rho(\mathcal{V}_{n, e})$. Similarly, note that $z_1 \ge z_2 \ge \cdots \ge z_n > 0$.

\begin{lemma}\label{gnm_lemma}
    For any $e \ge 4$ such that $t_e \ge 1$, and any $n \ge b_e$, we have $y_2 = y_3 = \cdots = y_{t + 1}$, $y_{t + 2} = y_{t + 3} = \cdots = y_{k + 1}$ and $y_{k + 3} = y_{k + 4} = \cdots = y_n$, alongside
    \begin{align}
        \label{gnm_local_1} (\gamma + 1) y_1 &= y_1 + t y_2 + (k - t) y_{k + 1} + y_{k + 2} + (n - k - 2) y_n,\\
        \label{gnm_local_2} (\gamma + 1) y_2 &= y_1 + t y_2 + (k - t) y_{k + 1} + y_{k + 2},\\
        \label{gnm_local_3} (\gamma + 1) y_{k + 1} &= y_1 + t y_2 + (k - t) y_{k + 1},\\
        \label{gnm_local_4} \gamma y_{k + 2} &= y_1 + t y_2,\\
        \label{gnm_local_5} \gamma y_n &= y_1 \qquad \mbox{(provided $n > k_e + 2$)}.
    \end{align}
    Also, $\gamma$ is the largest real root of
    \begin{align}
    \label{comp_aux_2}
    \begin{split} 
        \lambda(\lambda + 1)(\lambda^3 &- \lambda^2(k - 1) - \lambda(k + t + 1) + (t + 1)(k - t - 1))\\
        &- (n - k - 2)(\lambda^3 - \lambda^2(k - 2) - \lambda(k + t - 1) + t(k - t - 1)) .
    \end{split}
    \end{align}
\end{lemma}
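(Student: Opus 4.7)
The plan is to exploit the symmetry of $\mathcal{D}_{n,e}$ to establish the orbit equalities, derive (2.1)--(2.5) by direct inspection of adjacencies, and then reduce the resulting five-variable linear system to the polynomial identity (2.6).

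First I would identify the natural vertex partition of $\mathcal{D}_{n,e}$ into five parts dictated by its construction $\left(\left(\left(K_{k-t}+K_1\right)\lor K_t\right)+(n-k-2)K_1\right)\lor K_1$: the apex vertex $1$ from the outermost join, the $t$ vertices of the $K_t$-clique (labelled $2,\ldots,t+1$), the $k-t$ vertices of the $K_{k-t}$-clique (labelled $t+2,\ldots,k+1$), the singleton $\{k+2\}$ arising from the $K_1$ inside $K_{k-t}+K_1$, and the $n-k-2$ pendants $k+3,\ldots,n$. Vertices within each part are interchangeable by an automorphism of $\mathcal{D}_{n,e}$, and because the graph is connected Perron--Frobenius produces a unique positive Perron eigenvector $y$ (up to scaling); hence $y$ must be constant on each part, which is the content of the three displayed equalities.

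Next I would verify (2.1)--(2.5) by writing $(By)_v = \gamma\,y_v$ for a representative $v$ in each part and adding $y_v$ to both sides in (2.1)--(2.3) to obtain the $(\gamma+1)$ coefficient. Vertex $1$ is adjacent to every other vertex, giving (2.1); a $K_t$-vertex is adjacent to vertex $1$, the remaining $K_t$-vertices, all $K_{k-t}$-vertices, and vertex $k+2$, giving (2.2); a $K_{k-t}$-vertex is adjacent to vertex $1$, all $K_t$-vertices, and the remaining $K_{k-t}$-vertices, giving (2.3); vertex $k+2$ is adjacent only to vertex $1$ and the $K_t$-vertices, giving (2.4); and, when $n>k+2$, each pendant is adjacent only to vertex $1$, giving (2.5).

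Finally, to derive the polynomial characterization I would view (2.1)--(2.5) as the linear system $(\gamma I - M)v = 0$, where $v = (y_1, y_2, y_{k+1}, y_{k+2}, y_n)^T > 0$ and $M$ is the $5\times 5$ quotient matrix of the equitable partition whose $(i,j)$-entry records the number of neighbors of a vertex in part $i$ lying in part $j$. Since $v\neq 0$, the scalar $\gamma$ is a root of $\det(\lambda I - M)$, and expanding this determinant along the two sparsest rows (the fourth and the fifth, each with only two nonzero entries) reduces the task to a weighted sum of two $4\times 4$ subdeterminants that reorganize into the cubic expressions appearing in (2.6). Because $M$ is nonnegative and arises from an equitable partition of a connected graph, $\gamma$ is the Perron eigenvalue of $M$ and hence the largest real root of $\det(\lambda I - M)$. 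The main obstacle is simply the algebraic bookkeeping in the expansion; a cleaner alternative is to substitute (2.4) and (2.5) into (2.1)--(2.3) to eliminate $y_{k+2}$ and $y_n$, then take linear combinations to isolate a polynomial identity in $\gamma$ alone after clearing denominators.
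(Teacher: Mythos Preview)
Your proposal is correct and follows essentially the same route as the paper: both arguments use the equitable partition of $\mathcal{D}_{n,e}$ into the five orbits you describe, read off \eqref{gnm_local_1}--\eqref{gnm_local_5} from the eigenvector equations, pass to the $5\times 5$ quotient matrix, and identify $\gamma$ as its spectral radius and hence the largest root of its characteristic polynomial, which is \eqref{comp_aux_2}. The only point the paper handles that you leave implicit is the boundary case $n=b_e=k+2$, where the pendant class is empty and the quotient is $4\times4$; there one checks directly that $\gamma$ is still the largest root of \eqref{comp_aux_2} (the second summand vanishes and the extra factor $\lambda(\lambda+1)$ contributes no root exceeding $\gamma$).
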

\begin{proof}
    From the eigenvalue--eigenvector equations for $\gamma$ and $y$, we directly obtain $y_2 = y_3 = \cdots = y_{t + 1}$, $y_{t + 2} = y_{t + 3} = \cdots = y_{k + 1}$ and $y_{k + 3} = y_{k + 4} = \cdots = y_n$, as well as \eqref{gnm_local_1}--\eqref{gnm_local_5}. Recall that $b_e = k_e + 2$ and suppose that $n \ge b_e + 1$. In this case, the vector
    \[
        \begin{bmatrix} y_1 & y_2 & y_{k + 1} & y_{k + 2} & y_n \end{bmatrix}^\intercal
    \]
    is an eigenvector of
    \begin{equation}\label{comp_aux_1}
        \begin{bmatrix}
            0 & t & k - t & 1 & n - k - 2\\
            1 & t - 1 & k - t & 1 & 0\\
            1 & t & k - t - 1 & 0 & 0\\
            1 & t & 0 & 0 & 0\\
            1 & 0 & 0 & 0 & 0
        \end{bmatrix}
    \end{equation}
    for the eigenvalue $\gamma$. Therefore, $\gamma$ is the spectral radius of \eqref{comp_aux_1}, hence a routine computation yields that $\gamma$ is also the largest real root of \eqref{comp_aux_2}. It is not difficult to verify that this claim also holds for the case $n = b_e$.
\end{proof}

The following lemma can be proved analogously to Lemma \ref{gnm_lemma}, so we omit its proof.

\begin{lemma}\label{hnm_lemma}
    For any $e \ge 4$ and $n \ge e + 2$, we have $z_3 = z_4 = \cdots = z_{e + 2}$ and $z_{e + 3} = z_{e + 4} = \cdots = z_n$, alongside
    \begin{align}
        \nonumber (\chi + 1) z_1 &= z_1 + z_2 + e z_3 + (n - e - 2) z_n,\\
        \label{hnm_local_2} (\chi + 1) z_2 &= z_1 + z_2 + e z_3,\\
        \label{hnm_local_3} \chi z_3 &= z_1 + z_2,\\
        \nonumber \chi z_n &= z_1 \qquad \mbox{(provided $n > e + 2$)}.
    \end{align}
    Also, $\chi$ is the largest real root of
    \[
        \lambda(\lambda + 1)(\lambda^2 - \lambda - 2e) - (n - e - 2)(\lambda^2 - e) .
    \]
\end{lemma}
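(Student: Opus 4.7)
The plan is to mirror the proof of Lemma~\ref{gnm_lemma}, just with $\mathcal{V}_{n,e}$ in place of $\mathcal{D}_{n,e}$. The graph $\mathcal{V}_{n,e} = (K_{1,e} + (n-e-2)K_1)\lor K_1$ carries a natural equitable partition into (at most) four cells: the universal vertex $1$ introduced by the outer join, the center $2$ of the star $K_{1,e}$, the $e$ star leaves $\{3,\ldots,e+2\}$, and, when $n > e+2$, the $n-e-2$ ``bottom'' isolated vertices $\{e+3,\ldots,n\}$. These cells are precisely the orbits of $\operatorname{Aut}(\mathcal{V}_{n,e})$, so the Perron vector $z$ must be constant on each of them, which immediately yields $z_3 = \cdots = z_{e+2}$ and $z_{e+3} = \cdots = z_n$.

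Writing the eigenvalue equation $Az = \chi z$ at one representative of each cell (vertex $1$, vertex $2$, a leaf, and, if present, a bottom vertex) and then adding the corresponding coordinate of $z$ to both sides produces exactly the four displayed relations. Consequently, the vector $[z_1, z_2, z_3, z_n]^{\intercal}$ is a positive eigenvector of the $4\times 4$ quotient matrix
\[
    Q = \begin{bmatrix}
        0 & 1 & e & n-e-2\\
        1 & 0 & e & 0\\
        1 & 1 & 0 & 0\\
        1 & 0 & 0 & 0
    \end{bmatrix}
\]
for the eigenvalue $\chi$, so $\chi = \rho(Q)$.

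To finish, I would expand $\det(\lambda I - Q)$ along the fourth column (which has only two nonzero entries): the remaining $3 \times 3$ principal minor factors cleanly as $(\lambda+1)(\lambda^2-\lambda-2e)$, and a short calculation then assembles the full characteristic polynomial into the claimed $\lambda(\lambda+1)(\lambda^2-\lambda-2e)-(n-e-2)(\lambda^2-e)$. The boundary case $n = e+2$ is dispatched by the same polynomial, since its second summand vanishes and the quotient matrix collapses to its leading $3 \times 3$ block, exactly as handled at the end of Lemma~\ref{gnm_lemma}. There is no genuine conceptual obstacle here; the symmetry of $\mathcal{V}_{n,e}$ is abundant enough that everything reduces to a tiny quotient, and the only real risk is sign bookkeeping in the $4 \times 4$ determinant expansion, against which the clean factorization of the $3 \times 3$ minor provides a reliable sanity check.
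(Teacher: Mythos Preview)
Your proposal is correct and follows exactly the route the paper intends: the paper omits the proof entirely, stating only that it ``can be proved analogously to Lemma~\ref{gnm_lemma}'', and your argument is precisely that analogue, down to forming the quotient matrix, expanding its characteristic polynomial, and handling the boundary case $n=e+2$ separately. The only cosmetic difference is that you justify the constancy of $z$ on cells via automorphisms rather than directly from the eigenvalue equations, but both are standard and equivalent here.
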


We end the section with another result that can easily be verified.
    
\begin{lemma}\label{third_lemma}
    For any $e \in \mathbb{N}$, we have
    \[
        \rho(K_2 \lor eK_1) = \frac{1 + \sqrt{8e + 1}}{2} .
    \]
\end{lemma}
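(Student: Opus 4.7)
The plan is to exploit the obvious symmetry of $K_2 \lor eK_1$: the two vertices of $K_2$ have identical neighborhoods (each is adjacent to the other and to all of the $e$ vertices of $eK_1$), and the $e$ vertices of $eK_1$ also have identical neighborhoods (each is adjacent to both vertices of $K_2$ and to nothing else). This yields an equitable partition of $V(K_2 \lor eK_1)$ into the two cells $\{1,2\}$ and $\{3,4,\ldots,e+2\}$, with quotient matrix
\[
    Q = \begin{bmatrix} 1 & e \\ 2 & 0 \end{bmatrix}.
\]

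First I would argue, using Perron--Frobenius, that the Perron eigenvector $x$ of $K_2 \lor eK_1$ must be constant on the two cells, say $x_1 = x_2 = a$ and $x_3 = \cdots = x_{e+2} = b$ with $a, b > 0$. Writing out the eigenvalue--eigenvector equations for $\rho := \rho(K_2 \lor eK_1)$, I would obtain
\[
    \rho a = a + e b, \qquad \rho b = 2 a,
\]
which is exactly the eigenvalue equation $Q \begin{bmatrix} a \\ b \end{bmatrix} = \rho \begin{bmatrix} a \\ b \end{bmatrix}$. Hence $\rho$ is an eigenvalue of $Q$, and since the characteristic polynomial of $Q$ is $\lambda^2 - \lambda - 2e$, its positive root
\[
    \frac{1 + \sqrt{1 + 8e}}{2}
\]
is the unique candidate for $\rho$.

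The only remaining point is to confirm that $\rho$ equals this larger root rather than the other (negative) root. This is immediate from positivity of the Perron vector, or equivalently from the standard fact that the spectral radius of a nonnegative matrix equals the spectral radius of the quotient matrix of any equitable partition. As the lemma itself warns the reader (``can easily be verified''), there is no real obstacle here; the only thing to be mildly careful about is to invoke equitability (or, alternatively, to substitute the guessed eigenvector into $A(K_2 \lor eK_1)$ directly) rather than to try to diagonalize the full $(e+2) \times (e+2)$ adjacency matrix.
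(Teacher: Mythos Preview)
Your argument is correct and is exactly the kind of routine verification the paper has in mind; the paper actually omits the proof entirely, merely remarking that the lemma ``can easily be verified,'' so there is no alternative approach to compare against. Your reduction via the equitable partition to the quotient matrix $Q$ with characteristic polynomial $\lambda^2 - \lambda - 2e$ is the standard way to do this, and indeed this same polynomial reappears later in the paper (e.g.\ in Lemma~\ref{v_simple_lemma}) for precisely this reason.
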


\section{T-subgraphs}\label{sc_tsub}

In the present section, we introduce the notion of T-subgraph and obtain several auxiliary lemmas. These results lead to a computer-assisted proof of Proposition \ref{computer_prop}. We begin as follows. For any $e \ge 1$, $n \ge b_e$ and threshold graph $G \in \mathcal{C}_{n, e}$, let $n - 1 = d_1 \ge d_2 \ge d_3 \ge \cdots \ge d_n$ be the degree sequence of $G$. Also, let $s$ be maximal such that $d_s \ge 2$. Then the \emph{T-subgraph} of graph $G$, denoted by $\mathcal{T}(G)$, is the subgraph of $G$ induced by the vertex set $\{2, 3, \ldots, s\}$. Note that $\mathcal{T}(G)$ is a connected threshold graph of order $s - 1$ and size $e$, and also observe that the threshold graphs from $\mathcal{C}_{n, e}$ can be uniquely determined by their T-subgraph. For the sake of brevity, let $\mathcal{T}_1(G) = \mathcal{T}(G) \lor K_1$. We proceed with the next lemma.

\begin{lemma}\label{poly_2_lemma}
    For any $e \ge 1$, $n \ge b_e$ and threshold graph $G \in \mathcal{C}_{n, e}$, we have that $\rho(G)$ is the largest real root of
    \[
        \lambda \, \mathcal{P}_{\mathcal{T}_1(G)}(\lambda) - (n - n') \, \mathcal{P}_{\mathcal{T}(G)}(\lambda) ,
    \]
    where $n' = |\mathcal{T}_1(G)|$.
\end{lemma}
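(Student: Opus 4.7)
My plan is to first pin down the structure of $G$ explicitly from the stepwise/threshold hypothesis, then compute $\mathcal{P}_G(\lambda)$ by a short block-matrix determinant, and finally extract $\rho(G)$ as the relevant root.

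For the structural step, since $G \in \mathcal{C}_{n,e}$ is a connected threshold graph with $e \ge 1$ and its adjacency matrix is stepwise, vertex $1$ must be adjacent to every other vertex, so $d_1 = n-1$. By maximality of $s$, vertices $s+1, \ldots, n$ each have degree at most $1$, and connectedness of $G$ combined with the stepwise shape of $A(G)$ then forces each such vertex to be a pendant whose unique neighbour is vertex $1$. Consequently, $G$ is precisely $\mathcal{T}_1(G)$ (living on $\{1, 2, \ldots, s\}$) with $q := n - n'$ extra pendants attached to vertex $1$.

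For the determinant step, ordering the vertices as $\{1\}$, then $\{2, \ldots, s\}$, then the pendants, one obtains the block form
\[
    \lambda I - A(G) = \begin{pmatrix} \lambda & -J_{1, s-1} & -J_{1, q} \\ -J_{s-1, 1} & \lambda I - A(\mathcal{T}(G)) & O \\ -J_{q, 1} & O & \lambda I_q \end{pmatrix} ,
\]
and I would take the Schur complement with respect to the trailing $\lambda I_q$ block. This modifies only the $(1,1)$ entry, replacing $\lambda$ by $\lambda - q/\lambda$. Using linearity of the determinant in that entry, whose cofactor in $\lambda I - A(\mathcal{T}_1(G))$ equals $\mathcal{P}_{\mathcal{T}(G)}(\lambda)$, a short rearrangement then yields
\[
    \mathcal{P}_G(\lambda) = \lambda^{q-1}\bigl[\lambda\,\mathcal{P}_{\mathcal{T}_1(G)}(\lambda) - q\,\mathcal{P}_{\mathcal{T}(G)}(\lambda)\bigr] .
\]

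Finally, since $e \ge 1$ we have $\rho(G) > 0$, so $\rho(G)$ is not a zero of the factor $\lambda^{q-1}$ and must be a root of the bracketed polynomial. Any real root of that polynomial strictly exceeding $\rho(G)$ would also be a real root of $\mathcal{P}_G$ exceeding the spectral radius, which is impossible; hence $\rho(G)$ is its largest real root, as claimed. The corner case $q = 0$ (no pendants) is handled directly: $G = \mathcal{T}_1(G)$ and the claim reduces to $\rho(G)$ being the largest real root of $\lambda\,\mathcal{P}_{\mathcal{T}_1(G)}(\lambda)$, which again holds because $\rho(G) > 0$. I expect the only (mild) obstacle to be verifying the pendant-attachment description from the stepwise hypothesis; the determinant manipulation itself is routine.
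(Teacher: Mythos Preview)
Your argument is correct. Both the structural identification (vertex $1$ universal, vertices $s+1,\ldots,n$ pendant at vertex $1$) and the Schur-complement computation go through, and the final extraction of $\rho(G)$ from the factorisation $\mathcal{P}_G(\lambda)=\lambda^{q-1}\bigl[\lambda\,\mathcal{P}_{\mathcal{T}_1(G)}(\lambda)-q\,\mathcal{P}_{\mathcal{T}(G)}(\lambda)\bigr]$ is clean.

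The route, however, differs from the paper's. The paper does not compute $\mathcal{P}_G$ at all: it uses the Perron vector to pass to the quotient matrix
\[
W=\begin{bmatrix}
0 & J_{1\times(n'-1)} & n-n'\\
J_{(n'-1)\times 1} & A(\mathcal{T}(G)) & O\\
1 & O & 0
\end{bmatrix}
\]
arising from the equitable partition $\{\{1\},\{2,\ldots,s\},\{s+1,\ldots,n\}\}$, observes that $\rho(G)=\rho(W)$, and then evaluates $\mathcal{P}_W(\lambda)$ by a cofactor expansion along the last column. Your approach is more elementary in that it avoids any appeal to Perron--Frobenius or to the (implicit) fact that the spectrum of $W$ embeds in that of $A(G)$; it also yields the bonus information that the bracketed polynomial is, up to the factor $\lambda^{q-1}$, the full characteristic polynomial of $G$. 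The paper's approach, on the other hand, fits the pattern used repeatedly elsewhere in the article (e.g.\ Lemmas~2.1 and~2.2), where quotient matrices of small fixed size are written down and their characteristic polynomials read off.
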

\begin{proof}
    Let $x$ be the Perron vector of $G$, and observe that
    \[
        A(G) = \begin{bmatrix}
            0 & J_{1 \times (n' - 1)} & J_{1 \times (n - n')}\\
            J_{(n' - 1) \times 1} & A(\mathcal{T}(G)) & O\\
            J_{(n - n') \times 1} & O & O
        \end{bmatrix} .
    \]
    If $n > n'$, then we have $x_{n' + 1} = x_{n' + 3} = \cdots = x_n$, while the vector
    \[
        \begin{bmatrix}
            x_1 & x_2 & \cdots & x_{n'} & x_n
        \end{bmatrix}^\intercal
    \]
    is an eigenvector of
    \begin{equation}\label{tsub_aux_1}
        W = \begin{bmatrix}
            0 & J_{1 \times (n' - 1)} & n - n'\\
            J_{(n' - 1) \times 1} & A(\mathcal{T}(G)) & O\\
            1 & O & 0
        \end{bmatrix}
    \end{equation}
    for the eigenvalue $\rho(G)$. Therefore, $\rho(G) = \rho(W)$. We note that $\rho(G) = \rho(W)$ holds even when $n = n'$, in which case the last column of \eqref{tsub_aux_1} contains only zeros. To finalize the proof, it suffices to compute that
    \begin{align*}
        \pushQED{\qed}
        \mathcal{P}_W(\lambda) &= \begin{vmatrix}
            \lambda & -J & -(n - n')\\
            -J & \lambda I - A(\mathcal{T}(G)) & O\\
            -1 & O & \lambda
        \end{vmatrix}\\
        &= \begin{vmatrix}
            \lambda & -J & 0\\
            -J & \lambda I - A(\mathcal{T}(G)) & O\\
            -1 & O & \lambda
        \end{vmatrix} + \begin{vmatrix}
            0 & O & -(n - n')\\
            -J & \lambda I - A(\mathcal{T}(G)) & O\\
            -1 & O & \lambda
        \end{vmatrix}\\
        &= \lambda \begin{vmatrix}
            \lambda & -J\\
            -J & \lambda I - A(\mathcal{T}(G)) 
        \end{vmatrix} - (n - n') \begin{vmatrix}
            \lambda I - A(\mathcal{T}(G))
        \end{vmatrix}\\
        &= \lambda \, \mathcal{P}_{\mathcal{T}_1(G)}(\lambda) - (n - n') \, \mathcal{P}_{\mathcal{T}(G)}(\lambda) . \qedhere
    \end{align*}
\end{proof}

In view of Lemma \ref{poly_2_lemma}, we introduce the auxiliary function $\mathcal{R}_G \colon [\rho(\mathcal{T}_1(G)), +\infty) \to \mathbb{R}$ given by
\[
     \mathcal{R}_G(\lambda) = \frac{\lambda \, \mathcal{P}_{\mathcal{T}_1(G)}(\lambda)}{\mathcal{P}_{\mathcal{T}(G)}(\lambda)} \qquad (\lambda \ge \rho(\mathcal{T}(G) \lor K_1)),
\]
for any $e \ge 1$, $n \ge b_e$ and threshold graph $G \in \mathcal{C}_{n, e}$. From the theory of nonnegative matrices (see, e.g., \cite[Chapter 8]{godroy2001}), we have $\rho(\mathcal{T}_1(G)) > \rho(\mathcal{T}(G))$, which means that $\mathcal{R}_G$ is well-defined. As it turns out, $\mathcal{R}_G$ is monotone.

\begin{lemma}\label{rat_lemma}
    For any $e \ge 1$, $n \ge b_e$ and threshold graph $G \in \mathcal{C}_{n, e}$, the rational function $\mathcal{R}_G$ is a strictly increasing bijection from $[\rho(\mathcal{T}_1(G)), +\infty)$ onto $[0, +\infty)$.
\end{lemma}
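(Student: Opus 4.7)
The plan is to treat $\mathcal{R}_G$ as a smooth function on the half-line $[\rho(\mathcal{T}_1(G)), +\infty)$ and establish strict monotonicity, the two boundary values, and hence the bijection, by analyzing its logarithmic derivative. First I would collect the easy pieces: the denominator $\mathcal{P}_{\mathcal{T}(G)}$ is strictly positive on the half-line because the already-noted strict inequality $\rho(\mathcal{T}(G)) < \rho(\mathcal{T}_1(G))$ keeps all roots of $\mathcal{P}_{\mathcal{T}(G)}$ strictly to the left of $\rho(\mathcal{T}_1(G))$, so $\mathcal{R}_G$ is in fact $C^\infty$ on its domain. Evaluation at the boundary gives $\mathcal{R}_G(\rho(\mathcal{T}_1(G))) = 0$ (numerator vanishes, denominator is positive), and since the numerator has degree one more than the denominator with positive leading coefficient, $\mathcal{R}_G(\lambda) \to +\infty$ as $\lambda \to +\infty$.

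The main step is strict monotonicity. Denote the eigenvalues of $\mathcal{T}_1(G)$ by $\mu_1 \ge \cdots \ge \mu_{n'}$ and those of $\mathcal{T}(G)$ by $\nu_1 \ge \cdots \ge \nu_{n'-1}$. Factorising $\mathcal{P}_H(\lambda) = \prod (\lambda - \sigma)$ over the respective spectra and taking the logarithmic derivative of $\mathcal{R}_G$ yields the tidy expression
\[
\frac{\mathcal{R}_G'(\lambda)}{\mathcal{R}_G(\lambda)} = \frac{1}{\lambda} + \frac{1}{\lambda - \mu_{n'}} + \sum_{i=1}^{n'-1} \left( \frac{1}{\lambda - \mu_i} - \frac{1}{\lambda - \nu_i} \right).
\]
Because $A(\mathcal{T}(G))$ is a principal submatrix of $A(\mathcal{T}_1(G))$, Cauchy interlacing gives $\mu_i \ge \nu_i$ for $i \in \{1, \ldots, n' - 1\}$. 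For $\lambda > \mu_1 = \rho(\mathcal{T}_1(G))$ every denominator above is positive, each paired difference is nonnegative, and the first two terms are strictly positive, forcing $\mathcal{R}_G'(\lambda) / \mathcal{R}_G(\lambda) > 0$. Since $\mathcal{R}_G > 0$ on the open half-line, this gives $\mathcal{R}_G' > 0$ on $(\rho(\mathcal{T}_1(G)), +\infty)$; strict monotonicity across the left endpoint is then automatic from $\mathcal{R}_G(\rho(\mathcal{T}_1(G))) = 0$ together with positivity on the open half-line.

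Once strict monotonicity is secured, the intermediate value theorem applied to the endpoint values $0$ and $+\infty$ delivers the bijection onto $[0, +\infty)$. The one place to be careful is the Cauchy interlacing bookkeeping: one wants the extra $\mu_{n'}$-term to be genuinely separated from the pairing, so that it together with the $1/\lambda$-term supplies strict positivity rather than mere non-negativity. Writing the logarithmic derivative in the form above makes this transparent, so the argument really does reduce to interlacing plus elementary sign-chasing.
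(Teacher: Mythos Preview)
Your proof is correct and essentially identical to the paper's: both compute the logarithmic derivative of $\mathcal{R}_G$, invoke Cauchy interlacing between the spectra of $\mathcal{T}(G)$ and $\mathcal{T}_1(G)$, and pair the terms so that each difference $\frac{1}{\lambda-\mu_i}-\frac{1}{\lambda-\nu_i}$ is nonnegative while the leftover $\frac{1}{\lambda}+\frac{1}{\lambda-\mu_{n'}}$ supplies strict positivity. The only cosmetic difference is that the paper indexes eigenvalues in increasing order and writes out the quotient-rule derivative explicitly before simplifying, whereas you go straight to the logarithmic derivative with decreasing indexing; the resulting pairing and the leftover terms are literally the same.
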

\begin{proof}
    Let $n' = |\mathcal{T}_1(G)|$. From the theory of symmetric matrices, we have
    \[
        \mathcal{P}_{\mathcal{T}_1(G)}(\lambda) = (\lambda - \xi_1)(\lambda - \xi_2) \cdots (\lambda - \xi_{n'})
    \]
    and
    \[
        \mathcal{P}_{\mathcal{T}(G)}(\lambda) = (\lambda - \zeta_1)(\lambda - \zeta_2) \cdots (\lambda - \zeta_{n' - 1})
    \]
    for some real numbers $\xi_1 \le \xi_2 \le \cdots \le \xi_{n'}$ and $\zeta_1 \le \zeta_2 \le \cdots \le \zeta_{n' - 1}$. For any $\lambda > \xi_{n'}$, we get
    \begin{align*}
        \mathcal{R}'_G(\lambda) &= \frac{\left(\lambda \, \mathcal{P}_{\mathcal{T}_1(G)}(\lambda)\right)' \mathcal{P}_{\mathcal{T}(G)}(\lambda) - \lambda \, \mathcal{P}_{\mathcal{T}_1(G)}(\lambda) \, \mathcal{P}'_{\mathcal{T}(G)}(\lambda)}{\left(\mathcal{P}_{\mathcal{T}(G)}(\lambda)\right)^2}\\
        &= \frac{\lambda \, \mathcal{P}_{\mathcal{T}_1(G)}(\lambda) \, \mathcal{P}_{\mathcal{T}(G)}(\lambda) \left(\frac{1}{\lambda} + \sum_{i = 1}^{n'} \frac{1}{\lambda - \xi_i}\right) - \lambda \, \mathcal{P}_{\mathcal{T}_1(G)}(\lambda) \, \mathcal{P}_{\mathcal{T}(G)}(\lambda) \sum_{i = 1}^{n' - 1} \frac{1}{\lambda - \zeta_i}}{\left(\mathcal{P}_{\mathcal{T}(G)}(\lambda)\right)^2}\\
        &= \mathcal{R}_G(\lambda) \left( \frac{1}{\lambda} + \sum_{i = 1}^{n'} \frac{1}{\lambda - \xi_i} - \sum_{i = 1}^{n' - 1} \frac{1}{\lambda - \zeta_i} \right) .
    \end{align*}
    We trivially observe that $\mathcal{R}_G(\lambda) > 0$ for every $\lambda > \xi_{n'}$. Besides, we have $\xi_1 \le \zeta_1 \le \xi_2 \le \zeta_2 \le \cdots \le \zeta_{n' - 1} \le \xi_{n'}$ from the eigenvalue interlacing theorem; see \cite[Chapter 9]{godroy2001}. Therefore,
    \[
        \frac{1}{\lambda} + \sum_{i = 1}^{n'} \frac{1}{\lambda - \xi_i} - \sum_{i = 1}^{n' - 1} \frac{1}{\lambda - \zeta_i} = \sum_{i = 1}^{n' - 1} \left( \frac{1}{\lambda - \xi_{i + 1}} - \frac{1}{\lambda - \zeta_i} \right) + \frac{1}{\lambda} + \frac{1}{\lambda - \xi_1} > 0 .
    \]
    With all of this in mind, we conclude that $\mathcal{R}_G'(\lambda) > 0$ for any $\lambda > \xi_{n'}$, which means that $\mathcal{R}_G$ is strictly increasing. Since $\mathcal{R}_G$ is continuous and $\mathcal{R}_G(\rho(\mathcal{T}_1(G))) = 0$, it also follows that its image is $[0, +\infty)$.
\end{proof}

From Lemma \ref{rat_lemma}, we directly obtain the next corollary.

\begin{corollary}\label{rat_cor}
    For any $e \ge 1$, $n \ge b_e$ and threshold graph $G \in \mathcal{C}_{n, e}$, let
    \[
        Q(\lambda) = \lambda \, \mathcal{P}_{\mathcal{T}_1(G)}(\lambda) - \alpha \, \mathcal{P}_{\mathcal{T}(G)}(\lambda) ,
    \]
    where $\alpha \ge 0$ is real. Then we have $\rho(Q) = \mathcal{R}_G^{-1}(\alpha)$, alongside $Q(\lambda) < 0$ for any $\lambda \in (\rho(\mathcal{T}(G)), \rho(Q))$.
\end{corollary}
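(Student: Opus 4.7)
The plan is to read $Q$ as a rescaling of $\mathcal{R}_G - \alpha$ on the domain where $\mathcal{R}_G$ lives, and to handle the extra slice $(\rho(\mathcal{T}(G)), \rho(\mathcal{T}_1(G)))$ by a short interlacing argument. Observe that for any $\lambda > \rho(\mathcal{T}(G))$ we have $\mathcal{P}_{\mathcal{T}(G)}(\lambda) > 0$, and for every $\lambda \ge \rho(\mathcal{T}_1(G))$ we get the identity $Q(\lambda) = \mathcal{P}_{\mathcal{T}(G)}(\lambda)\bigl(\mathcal{R}_G(\lambda) - \alpha\bigr)$ straight from the definition of $\mathcal{R}_G$.

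For the equality $\rho(Q) = \mathcal{R}_G^{-1}(\alpha)$, I would set $\lambda^{\star} := \mathcal{R}_G^{-1}(\alpha)$, which by Lemma \ref{rat_lemma} exists, is unique, and satisfies $\lambda^{\star} \ge \rho(\mathcal{T}_1(G)) > \rho(\mathcal{T}(G))$. Plugging into the factorization gives $Q(\lambda^{\star}) = 0$, while strict monotonicity of $\mathcal{R}_G$ forces $Q(\lambda) > 0$ for every $\lambda > \lambda^{\star}$. Hence $\lambda^{\star}$ is the largest real root of $Q$, as required.

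For the sign claim on $(\rho(\mathcal{T}(G)), \rho(Q))$, I would split the interval at $\rho(\mathcal{T}_1(G))$. On the upper part $[\rho(\mathcal{T}_1(G)), \lambda^{\star})$ (which is empty when $\alpha = 0$), monotonicity of $\mathcal{R}_G$ gives $\mathcal{R}_G(\lambda) < \alpha$, and the factorization forces $Q(\lambda) < 0$. On the lower part $(\rho(\mathcal{T}(G)), \rho(\mathcal{T}_1(G)))$, I would reuse the interlacing invoked in the proof of Lemma \ref{rat_lemma}: with $\xi_1 \le \cdots \le \xi_{n'}$ the roots of $\mathcal{P}_{\mathcal{T}_1(G)}$ and $\zeta_1 \le \cdots \le \zeta_{n'-1}$ those of $\mathcal{P}_{\mathcal{T}(G)}$, one has $\xi_{n'-1} \le \zeta_{n'-1} = \rho(\mathcal{T}(G))$ and $\xi_{n'} = \rho(\mathcal{T}_1(G))$, so $\lambda$ sits strictly between the two largest roots of the monic polynomial $\mathcal{P}_{\mathcal{T}_1(G)}$, making $\mathcal{P}_{\mathcal{T}_1(G)}(\lambda) < 0$. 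Combining $\lambda\,\mathcal{P}_{\mathcal{T}_1(G)}(\lambda) < 0$ with $\alpha\,\mathcal{P}_{\mathcal{T}(G)}(\lambda) \ge 0$ then yields $Q(\lambda) < 0$.

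The corollary is essentially a direct repackaging of Lemma \ref{rat_lemma}, so there is no real obstacle. The only subtlety is that $\mathcal{R}_G$ is not defined on the lower subinterval $(\rho(\mathcal{T}(G)), \rho(\mathcal{T}_1(G)))$, which is why a separate interlacing step is needed to close the argument there.
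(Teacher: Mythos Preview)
Your proof is correct and follows essentially the same approach as the paper: both use the monotonicity from Lemma~\ref{rat_lemma} to handle $[\rho(\mathcal{T}_1(G)), \rho(Q))$ and an interlacing argument for the remaining slice $(\rho(\mathcal{T}(G)), \rho(\mathcal{T}_1(G)))$. The only cosmetic difference is that the paper invokes Perron--Frobenius simplicity of $\rho(\mathcal{T}_1(G))$ before applying interlacing, whereas you deduce $\xi_{n'-1} \le \zeta_{n'-1} < \lambda < \xi_{n'}$ directly from interlacing, which is arguably slightly cleaner.
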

\begin{proof}
    Let $\beta = \mathcal{R}_G^{-1}(\alpha)$, as well as $\xi_1 = \rho(\mathcal{T}_1(G))$ and $\xi = \rho(\mathcal{T}(G))$. By Lemma~\ref{rat_lemma}, we have $Q(\beta) = 0$, alongside $Q(\lambda) > 0$ for any $\lambda > \beta$, and $Q(\lambda) < 0$ for any $\lambda \in [\xi_1, \beta)$. Therefore, $\rho(Q) = \beta$. From the theory of nonnegative matrices, we know that $\xi_1$ is a simple root of $\mathcal{P}_{\mathcal{T}_1(G)}(\lambda)$. By virtue of the eigenvalue interlacing theorem, this means that for any $\lambda \in (\xi, \xi_1)$, we have $\mathcal{P}_{\mathcal{T}_1(G)}(\lambda) < 0$ and $\mathcal{P}_{\mathcal{T}(G)}(\lambda) > 0$, hence $Q(\lambda) < 0$.
\end{proof}

We now define the polynomial
\begin{align*}
    \mathcal{Q}_{G_1, G_2}(\lambda) &= \lambda \left( \mathcal{P}_{\mathcal{T}_1(G_1)}(\lambda) \, \mathcal{P}_{\mathcal{T}(G_2)}(\lambda) - \mathcal{P}_{\mathcal{T}_1(G_2)}(\lambda) \, \mathcal{P}_{\mathcal{T}(G_1)}(\lambda) \right)\\
    &+ (|\mathcal{T}(G_1)| - |\mathcal{T}(G_2)|) \, \mathcal{P}_{\mathcal{T}(G_1)}(\lambda) \, \mathcal{P}_{\mathcal{T}(G_2)}(\lambda) 
\end{align*}
for any $e \ge 1$, $n \ge b_e$ and threshold graphs $G_1, G_2 \in \mathcal{C}_{n, e}$. As it turns out, $\mathcal{Q}_{G_1, G_2}(\lambda)$ can be used while proving the extremal property of $\mathcal{V}_{n, e}$ and $\mathcal{D}_{n, e}$, as shown in the following two lemmas.

\begin{lemma}\label{comp_lemma_1}
    Suppose that $e \ge 4$ and $n \ge e + 2$, and let $G \in \mathcal{C}_{n, e}$ be a threshold graph such that $G \not\cong \mathcal{V}_{n, e}$. Then $\mathcal{Q}_{G, \mathcal{V}_{n, e}}(\lambda)$ is a nonzero polynomial with a positive leading coefficient. Moreover, the following holds:
    \begin{enumerate}[label=\textbf{(\roman*)}]
        \item if $\rho(\mathcal{Q}_{G, \mathcal{V}_{n, e}}) < \rho(\mathcal{T}_1(\mathcal{V}_{n, e}))$, then $\chi > \rho(G)$;
        \item if $\rho(\mathcal{Q}_{G, \mathcal{V}_{n, e}}) \ge \rho(\mathcal{T}_1(\mathcal{V}_{n, e}))$ and $n > e + 2 + \mathcal{R}_{\mathcal{V}_{n, e}}(\rho(\mathcal{Q}_{G, \mathcal{V}_{n, e}}))$, then $\chi > \rho(G)$.
    \end{enumerate}
\end{lemma}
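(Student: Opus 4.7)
The plan is to work through the identity
\[
    \mathcal{Q}_{G, V}(\lambda) = Q_G(\lambda)\,\mathcal{P}_{\mathcal{T}(V)}(\lambda) - Q_V(\lambda)\,\mathcal{P}_{\mathcal{T}(G)}(\lambda),
\]
where $V := \mathcal{V}_{n,e}$ and $Q_H(\lambda) := \lambda\,\mathcal{P}_{\mathcal{T}_1(H)}(\lambda) - (n - |\mathcal{T}_1(H)|)\,\mathcal{P}_{\mathcal{T}(H)}(\lambda)$ is the polynomial whose largest real root is $\rho(H)$ by Lemma~\ref{poly_2_lemma}. This identity is immediate from the definitions once one notes that $(n - |\mathcal{T}_1(V)|) - (n - |\mathcal{T}_1(G)|) = |\mathcal{T}(G)| - |\mathcal{T}(V)|$. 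Evaluating at $\lambda = \chi$ and using $Q_V(\chi) = 0$, I obtain $\mathcal{Q}_{G,V}(\chi) = Q_G(\chi)\,\mathcal{P}_{\mathcal{T}(V)}(\chi)$, with $\mathcal{P}_{\mathcal{T}(V)}(\chi) > 0$ since $\mathcal{T}(V) = K_{1,e}$ is a proper induced subgraph of the connected graph $V$. I then verify $\chi > \rho(\mathcal{T}(G))$ by pairing Stanley's bound $\rho(\mathcal{T}(G)) \le (-1 + \sqrt{1+8e})/2$ from \cite{stanley1987} with Lemma~\ref{third_lemma}, which gives $\chi \ge \rho(\mathcal{T}_1(V)) = \rho(K_2 \lor eK_1) = (1 + \sqrt{1+8e})/2$. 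Corollary~\ref{rat_cor} then ensures $Q_G(\chi) > 0$ is equivalent to $\chi > \rho(G)$, so the whole lemma reduces to showing $\mathcal{Q}_{G,V}(\chi) > 0$.

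To deal with the positive-leading-coefficient claim, I would rewrite
\[
    \mathcal{Q}_{G,V}(\lambda) = \mathcal{P}_{\mathcal{T}(G)}(\lambda)\,\mathcal{P}_{\mathcal{T}(V)}(\lambda)\,\bigl(\mathcal{R}_G(\lambda) - \mathcal{R}_V(\lambda) + |\mathcal{T}(G)| - |\mathcal{T}(V)|\bigr)
\]
and expand $\mathcal{R}_G, \mathcal{R}_V$ at $\lambda = \infty$ via polynomial long division of $\lambda\,\mathcal{P}_{\mathcal{T}_1(H)}$ by $\mathcal{P}_{\mathcal{T}(H)}$. The first three asymptotic terms are controlled by the order, edge count, and triangle count of the underlying threshold graph, and cancel in the inner bracket; the residual term of order $1/\lambda^2$ then gives the leading term of $\mathcal{Q}_{G,V}$ as a multiple of $e(e+1) - \sum_{v \in \mathcal{T}(G)} d_v^2$. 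Using the identity $\sum_v d_v^2 = \sum_{uv \in E}(d_u + d_v)$ together with the bound $d_u + d_v \le e+1$ for every edge $uv$ (since all $e$ edges must be incident to $u$ or $v$), I would conclude $\sum_v d_v^2 \le e(e+1)$, with a short case analysis of when all adjacent pairs saturate the bound showing that equality forces $\mathcal{T}(G) \in \{K_{1,e}, K_3\}$. Since $e \ge 4$ excludes $K_3$, and $\mathcal{T}(G) \not\cong K_{1,e} = \mathcal{T}(V)$ (otherwise $G \cong V$), the leading coefficient is strictly positive.

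Once positive leading coefficient is secured, parts (i) and (ii) both reduce to showing $\chi > \rho(\mathcal{Q}_{G,V})$. In (i), this is $\chi \ge \rho(\mathcal{T}_1(V)) > \rho(\mathcal{Q}_{G,V})$ directly. In (ii), Lemma~\ref{rat_lemma} combined with $\mathcal{R}_V(\chi) = n - e - 2$ (which follows from $Q_V(\chi) = 0$) turns the hypothesis $n > e+2+\mathcal{R}_V(\rho(\mathcal{Q}_{G,V}))$ into $\mathcal{R}_V(\chi) > \mathcal{R}_V(\rho(\mathcal{Q}_{G,V}))$, hence $\chi > \rho(\mathcal{Q}_{G,V})$ by monotonicity. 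Either way $\mathcal{Q}_{G,V}(\chi) > 0$, so $\chi > \rho(G)$. The main obstacle is the asymptotic leading-coefficient computation — keeping track of four leading terms of each $\mathcal{P}_{\mathcal{T}_1(\cdot)}$ and $\mathcal{P}_{\mathcal{T}(\cdot)}$ through the long division — followed by the combinatorial equality analysis in $\sum d_v^2 \le e(e+1)$.
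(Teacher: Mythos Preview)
Your argument is correct and, in one key respect, genuinely different from the paper's.

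The reduction of $\chi > \rho(G)$ to $\mathcal{Q}_{G,V}(\chi) > 0$ via the identity $\mathcal{Q}_{G,V} = Q_G\,\mathcal{P}_{\mathcal{T}(V)} - Q_V\,\mathcal{P}_{\mathcal{T}(G)}$, and the handling of parts (i) and (ii) through $\chi > \rho(\mathcal{Q}_{G,V})$, match the paper's approach step for step. Your use of Stanley's bound \cite{stanley1987} to get $\rho(\mathcal{T}(G)) \le \frac{-1+\sqrt{1+8e}}{2} < \frac{1+\sqrt{1+8e}}{2} = \rho(\mathcal{T}_1(V))$ is a clean alternative to the paper's route through Theorems~\ref{bruhoff_th} and~\ref{row_th}.

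The substantive difference is the leading-coefficient claim. The paper does not compute anything: it argues by contradiction, observing that if $\mathcal{Q}_{G,V}$ were identically zero or had negative leading coefficient, then $\chi \le \rho(G)$ would hold for all sufficiently large $n$, contradicting Theorem~\ref{cvetrow_v_th} (Cvetkovi\'c--Rowlinson). Your approach instead computes the leading coefficient directly. The Schur-complement expansion $\mathcal{R}_H(\lambda) = \lambda^2 - \mathbf{1}^\intercal(\lambda I - A(\mathcal{T}(H)))^{-1}\mathbf{1} \cdot \lambda$ (hidden behind your ``long division'') gives $\mathcal{R}_H(\lambda) = \lambda^2 - p - 2e/\lambda - \bigl(\sum_{v}d_v^2\bigr)/\lambda^2 + O(\lambda^{-3})$, so the $\lambda^{-2}$ coefficient of $\mathcal{R}_G - \mathcal{R}_V + |\mathcal{T}(G)| - |\mathcal{T}(V)|$ is exactly $e(e+1) - \sum_{v \in \mathcal{T}(G)} d_v^2$, as you claim. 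The edge-sum inequality $d_u + d_v \le e+1$ (because the $d_u + d_v - 1$ edges incident to $u$ or $v$ are among the $e$ edges total) then gives $\sum d_v^2 = \sum_{uv \in E}(d_u+d_v) \le e(e+1)$, with equality forcing every two edges to share an endpoint, hence $\mathcal{T}(G) \in \{K_{1,e}, K_3\}$; since $e \ge 4$ and $\mathcal{T}(G) \ne K_{1,e}$, the coefficient is strictly positive. This is more work than the paper's two-line contradiction, but it is entirely self-contained: it does not invoke Theorem~\ref{cvetrow_v_th}, which is precisely the asymptotic result the surrounding machinery is meant to sharpen. In that sense your argument removes a logical dependency that the paper's proof carries.
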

\begin{proof}
    Observe that $\mathcal{T}(\mathcal{V}_{n, e}) = K_{1, e}$ and $\mathcal{T}_1(\mathcal{V}_{n, e}) = K_2 \lor e K_1$. Let $\xi_1 = \rho(K_2 \lor e K_1)$ and note that $\xi_1 > \rho(\mathcal{T}(G))$. Indeed, $\mathcal{T}(G)$ is a graph of size $e$, hence by Theorems \ref{bruhoff_th} and \ref{row_th}, we have
    \[
        \rho(\mathcal{T}(G)) \le \rho(K_t \lor (K_{k - t} + K_1)) < \rho(K_{k + 1}) = k.
    \]
    On the other hand, Lemma \ref{third_lemma} implies
    \[
        \xi_1 = \frac{1 + \sqrt{8e + 1}}{2} \ge \frac{1 + \sqrt{4k(k - 1) + 1}}{2} = \frac{1 + (2k - 1)}{2} = k.
    \]
    Note that $\chi \ge \xi_1$ due to Lemmas \ref{poly_2_lemma} and \ref{rat_lemma}. Therefore, by Lemma \ref{poly_2_lemma} and Corollary~\ref{rat_cor}, we conclude that $\chi > \rho(G)$ holds if and only if
    \[
        \chi \, \mathcal{P}_{\mathcal{T}_1(G)}(\chi) - (n - n') \, \mathcal{P}_{\mathcal{T}(G)}(\chi) > 0,
    \]
    where $n' = |\mathcal{T}_1(G)| \le e + 1$. Since
    \[
        n - e - 2 = \frac{\chi \, \mathcal{P}_{K_2 \lor eK_1}(\chi)}{\mathcal{P}_{K_{1, e}}(\chi)},
    \]
    a routine computation yields that $\chi > \rho(G)$ is equivalent to $\mathcal{Q}_{G, \mathcal{V}_{n, e}}(\chi) > 0$.

    If we suppose that $\mathcal{Q}_{G, \mathcal{V}_{n, e}}(\lambda)$ is a zero polynomial or has a negative leading coefficient, then this implies that there is a $\zeta \ge \xi_1$, such that $\chi \le \rho(G)$ provided $\chi \ge \zeta$. Bearing in mind Lemma \ref{rat_lemma}, we obtain a contradiction to Theorem \ref{cvetrow_v_th}. Therefore, $\mathcal{Q}_{G, \mathcal{V}_{n, e}}(\lambda)$ must be a nonzero polynomial with a positive leading coefficient. If $\rho(\mathcal{Q}_{G, \mathcal{V}_{n, e}}) < \xi_1$, then $\chi \ge \xi_1$ implies $\mathcal{Q}_{G, \mathcal{V}_{n, e}}(\chi) > 0$, hence $\chi > \rho(G)$. This proves statement (i). Now, suppose that $\rho(\mathcal{Q}_{G, \mathcal{V}_{n, e}}) \ge \xi_1$. In this case, the value $\mathcal{R}_{\mathcal{V}_{n, e}}(\rho(\mathcal{Q}_{G, \mathcal{V}_{n, e}}))$ is well-defined. Furthermore, if $n - e - 2 > \mathcal{R}_{\mathcal{V}_{n, e}}(\rho(\mathcal{Q}_{G, \mathcal{V}_{n, e}}))$, then Lemmas \ref{poly_2_lemma} and \ref{rat_lemma} imply $\chi > \rho(\mathcal{Q}_{G, \mathcal{V}_{n, e}})$. From here, we get $\chi > \rho(G)$, which proves statement (ii).
\end{proof}

\begin{lemma}\label{comp_lemma_2}
    Suppose that $e \ge 4$, $t_e \ge 1$ and $n \ge b_e$, and let $G \in \mathcal{C}_{n, e}$ be a threshold graph such that $G \not\cong \mathcal{D}_{n, e}$. Then the following holds:
    \begin{enumerate}[label=\textbf{(\roman*)}]
        \item if $\mathcal{Q}_{G, \mathcal{D}_{n, e}}(\lambda)$ is a nonzero polynomial with a positive leading coefficient such that \linebreak $\rho(\mathcal{Q}_{G, \mathcal{D}_{n, e}}) < \rho(\mathcal{T}_1(\mathcal{D}_{n, e}))$, then $\gamma > \rho(G)$;

        \item if $\mathcal{Q}_{G, \mathcal{D}_{n, e}}(\lambda)$ is a nonzero polynomial with a negative leading coefficient such that \linebreak $\rho_2(\mathcal{Q}_{G, \mathcal{D}_{n, e}}) < \rho(\mathcal{T}_1(\mathcal{D}_{n, e})) < \rho(\mathcal{Q}_{G, \mathcal{D}_{n, e}})$ and $n < b_e + \mathcal{R}_{\mathcal{D}_{n, e}}(\rho(\mathcal{Q}_{G, \mathcal{D}_{n, e}}))$, then $\gamma > \rho(G)$.
    \end{enumerate}
\end{lemma}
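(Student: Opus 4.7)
The plan is to mirror the structure of the proof of Lemma \ref{comp_lemma_1}. First I would reduce the inequality $\gamma > \rho(G)$ to an equivalent sign condition on $\mathcal{Q}_{G, \mathcal{D}_{n, e}}(\gamma)$, and then verify that sign condition separately under each of the hypotheses (i) and (ii).

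For the reduction, the first task is to confirm $\gamma > \rho(\mathcal{T}(G))$, which is needed in order to invoke Corollary \ref{rat_cor} with $G$ in the role of the threshold graph. The T-subgraph $\mathcal{T}(G)$ has exactly $e$ edges and, since $G$ is connected and every vertex of degree at most $1$ in $G$ is attached only to vertex $1$, has no isolated vertices. Theorem \ref{row_th} then gives $\rho(\mathcal{T}(G)) \le \rho(\mathcal{T}(\mathcal{D}_{n, e}))$, and equality is ruled out because threshold graphs in $\mathcal{C}_{n, e}$ are uniquely determined by their T-subgraphs while $G \not\cong \mathcal{D}_{n, e}$. Combining with $\gamma \ge \rho(\mathcal{T}_1(\mathcal{D}_{n, e})) > \rho(\mathcal{T}(\mathcal{D}_{n, e}))$ (from Lemmas \ref{poly_2_lemma} and \ref{rat_lemma}) delivers the required strict inequality. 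Exactly as in Lemma \ref{comp_lemma_1}, Corollary \ref{rat_cor} then gives
\[
\gamma > \rho(G) \iff Q(\gamma) > 0,
\]
where $Q(\lambda) := \lambda \, \mathcal{P}_{\mathcal{T}_1(G)}(\lambda) - (n - n') \, \mathcal{P}_{\mathcal{T}(G)}(\lambda)$ and $n' = |\mathcal{T}_1(G)|$. Using the defining identity $(n - b_e) \, \mathcal{P}_{\mathcal{T}(\mathcal{D}_{n, e})}(\gamma) = \gamma \, \mathcal{P}_{\mathcal{T}_1(\mathcal{D}_{n, e})}(\gamma)$ to eliminate $n$, and writing $n - n' = (n - b_e) + (|\mathcal{T}(\mathcal{D}_{n, e})| - |\mathcal{T}(G)|)$, a routine calculation collapses the equivalence into the identity
\[
\mathcal{Q}_{G, \mathcal{D}_{n, e}}(\gamma) = Q(\gamma) \, \mathcal{P}_{\mathcal{T}(\mathcal{D}_{n, e})}(\gamma).
\]
Since $\gamma > \rho(\mathcal{T}(\mathcal{D}_{n, e}))$ forces $\mathcal{P}_{\mathcal{T}(\mathcal{D}_{n, e})}(\gamma) > 0$, the original inequality $\gamma > \rho(G)$ is therefore equivalent to $\mathcal{Q}_{G, \mathcal{D}_{n, e}}(\gamma) > 0$.

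Case (i) is then immediate: positivity of the leading coefficient together with $\rho(\mathcal{Q}_{G, \mathcal{D}_{n, e}}) < \rho(\mathcal{T}_1(\mathcal{D}_{n, e})) \le \gamma$ places $\gamma$ strictly to the right of every real root of $\mathcal{Q}_{G, \mathcal{D}_{n, e}}$, so the polynomial is positive at $\gamma$. For case (ii), I would instead argue that $\gamma$ lies in the open interval $(\rho_2(\mathcal{Q}_{G, \mathcal{D}_{n, e}}), \rho(\mathcal{Q}_{G, \mathcal{D}_{n, e}}))$, on which a polynomial with negative leading coefficient and simple largest real root (here $\rho_2 < \rho$ forces simplicity) is necessarily positive. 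The left endpoint follows from $\gamma \ge \rho(\mathcal{T}_1(\mathcal{D}_{n, e})) > \rho_2(\mathcal{Q}_{G, \mathcal{D}_{n, e}})$; the right endpoint follows by applying the strict monotonicity of $\mathcal{R}_{\mathcal{D}_{n, e}}$ (Lemma \ref{rat_lemma}) to the hypothesis $n - b_e < \mathcal{R}_{\mathcal{D}_{n, e}}(\rho(\mathcal{Q}_{G, \mathcal{D}_{n, e}}))$, combined with the identity $\mathcal{R}_{\mathcal{D}_{n, e}}(\gamma) = n - b_e$.

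The main conceptual difference from Lemma \ref{comp_lemma_1} is that there one could rule out the negative-leading-coefficient scenario by invoking Theorem \ref{cvetrow_v_th}, which guarantees that $\mathcal{V}_{n, e}$ beats any other threshold graph for sufficiently large $n$. No such dominance is available for $\mathcal{D}_{n, e}$; in fact, for $n$ large, $\mathcal{V}_{n, e}$ overtakes $\mathcal{D}_{n, e}$, so $\mathcal{Q}_{G, \mathcal{D}_{n, e}}(\lambda)$ can genuinely have either sign for its leading coefficient. This is precisely why case (ii) needs the extra hypothesis $n < b_e + \mathcal{R}_{\mathcal{D}_{n, e}}(\rho(\mathcal{Q}_{G, \mathcal{D}_{n, e}}))$: it confines $n$ to the regime in which $\mathcal{D}_{n, e}$ still beats $G$, and disentangling which positional data on the roots of $\mathcal{Q}_{G, \mathcal{D}_{n, e}}$ must be assumed (rather than deduced as in Lemma \ref{comp_lemma_1}) is the step I expect to require the most care.
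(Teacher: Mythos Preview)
Your proposal is correct and follows essentially the same route as the paper: reduce $\gamma > \rho(G)$ to $\mathcal{Q}_{G,\mathcal{D}_{n,e}}(\gamma) > 0$ via Lemma~\ref{poly_2_lemma} and Corollary~\ref{rat_cor}, then verify the sign in each case using $\gamma \ge \rho(\mathcal{T}_1(\mathcal{D}_{n,e}))$ for the lower bound and the monotonicity of $\mathcal{R}_{\mathcal{D}_{n,e}}$ for the upper bound in~(ii). The only cosmetic difference is that the paper establishes $\rho(\mathcal{T}_1(\mathcal{D}_{n,e})) > \rho(\mathcal{T}(G))$ by sandwiching both around $k$ (as in the proof of Lemma~\ref{comp_lemma_1}), whereas you compare $\rho(\mathcal{T}(G))$ directly to $\rho(\mathcal{T}(\mathcal{D}_{n,e}))$ via Theorem~\ref{row_th}; both arguments are valid and yield the same conclusion.
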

\begin{proof}
    Observe that $\mathcal{T}(\mathcal{D}_{n, e}) = K_t \lor \left(K_{k - t} + K_1\right)$ and $\mathcal{T}_1(\mathcal{D}_{n, e}) = K_{t + 1} \lor \left(K_{k - t} + K_1\right)$. Recall that $b_e = k_e + 2$ and let $\xi_1 = \rho(K_{t + 1} \lor \left(K_{k - t} + K_1\right))$. We trivially verify that $\xi_1 > k$, hence we can conclude analogously to Lemma~\ref{comp_lemma_1} that $\xi_1 > \rho(\mathcal{T}(G))$, which implies that $\gamma > \rho(G)$ is equivalent to $\mathcal{Q}_{G, \mathcal{D}_{n, e}}(\gamma) > 0$. If $\mathcal{Q}_{G, \mathcal{D}_{n, e}}(\lambda)$ is a nonzero polynomial with a positive leading coefficient such that $\rho(\mathcal{Q}_{G, \mathcal{D}_{n, e}}) < \xi_1$, then $\gamma \ge \xi_1$ gives $\mathcal{Q}_{G, \mathcal{D}_{n, e}}(\gamma) > 0$. Therefore, we obtain $\gamma > \rho(G)$ in accordance with statement (i).

    Now, suppose that $\mathcal{Q}_{G, \mathcal{D}_{n, e}}(\lambda)$ is a nonzero polynomial with a negative leading coefficient such that $\rho_2(\mathcal{Q}_{G, \mathcal{D}_{n, e}}) < \xi_1 < \rho(\mathcal{Q}_{G, \mathcal{D}_{n, e}})$ and $n < b_e + \mathcal{R}_{\mathcal{D}_{n, e}}(\rho(\mathcal{Q}_{G, \mathcal{D}_{n, e}}))$. In this case, $\rho(\mathcal{Q}_{G, \mathcal{D}_{n, e}}) \neq -\infty$ must be a simple root of $\mathcal{Q}_{G, \mathcal{D}_{n, e}}(\lambda)$ due to $\rho_2(\mathcal{Q}_{G, \mathcal{D}_{n, e}}) < \rho(\mathcal{Q}_{G, \mathcal{D}_{n, e}})$. From here, we get $\mathcal{Q}_{G, \mathcal{D}_{n, e}}(\lambda) > 0$ for any $\lambda \in (\rho_2(\mathcal{Q}_{G, \mathcal{D}_{n, e}}), \rho(\mathcal{Q}_{G, \mathcal{D}_{n, e}}))$. By Lemma~\ref{rat_lemma}, we have that $\mathcal{R}_{\mathcal{D}_{n, e}}(\rho(\mathcal{Q}_{G, \mathcal{D}_{n, e}}))$ is well-defined. Moreover, $n - b_e < \mathcal{R}_{\mathcal{D}_{n, e}}(\rho(\mathcal{Q}_{G, \mathcal{D}_{n, e}}))$ implies $\gamma < \rho(\mathcal{Q}_{G, \mathcal{D}_{n, e}})$, while $\gamma > \rho_2(\mathcal{Q}_{G, \mathcal{D}_{n, e}})$ also holds because $\gamma \ge \xi_1$. With all of this in mind, we conclude that $\mathcal{Q}_{G, \mathcal{D}_{n, e}}(\gamma) > 0$, hence $\gamma > \rho(G)$, which yields statement~(ii).
\end{proof}

We are now in a position to describe a strategy that can be used to construct a computer-assisted proof of Proposition \ref{computer_prop}. We present an algorithm that inputs an argument $e \ge 4$ such that $t_e \ge 1$ and attempts to apply Lemmas \ref{comp_lemma_1} and \ref{comp_lemma_2} to verify the claim that no threshold graph $G \in \mathcal{C}_{n, e}$ such that $G \not\cong \mathcal{D}_{n, e}, \mathcal{V}_{n, e}$ can have the maximum spectral radius on $\mathcal{C}_{n, e}$, for any $n \ge b_e$. For a given $e \ge 4$ with $t_e \ge 1$, let $\mathcal{S}_e$ be the finite set of all the graphs that appear as the T-subgraph of a threshold graph from $\mathcal{C}_{n, e}$ for some $n \ge b_e$. Moreover, let $\mathcal{S}^*_e = \mathcal{S}_e \setminus \{ K_{1, e}, K_t \lor \left(K_{k - t} + K_1\right) \}$. The algorithm iterates over all the graphs $T \in \mathcal{S}^*_e$ and performs the following procedure:

\begin{enumerate}[label=\textbf{(\arabic*)}]
    \item Let $G \in \mathcal{C}_{n, e}$ be a threshold graph such that $\mathcal{T}(G) = T$. Note that $n \ge |T \lor K_1|$ and $b_e \le |T \lor K_1| \le e + 1$.
    
    \item Verify that $\mathcal{Q}_{G, \mathcal{D}_{n, e}}(\lambda) \not\equiv 0$.
    
    \item If the leading coefficient of $\mathcal{Q}_{G, \mathcal{D}_{n, e}}(\lambda)$ is positive, verify that $\rho(\mathcal{Q}_{G, \mathcal{D}_{n, e}}(\lambda)) < \rho(\mathcal{T}_1(\mathcal{D}_{n, e}))$ and use Lemma \ref{comp_lemma_2}(i) to conclude that $G$ does not attain the maximum spectral radius on $\mathcal{C}_{n, e}$. Report success and stop the algorithm.

    \item If the leading coefficient of $\mathcal{Q}_{G, \mathcal{D}_{n, e}}(\lambda)$ is negative, verify that $\rho_2(\mathcal{Q}_{G, \mathcal{D}_{n, e}}) < \linebreak \rho(\mathcal{T}_1(\mathcal{D}_{n, e})) < \rho(\mathcal{Q}_{G, \mathcal{D}_{n, e}})$ and use Lemma \ref{comp_lemma_2}(ii) to conclude that $G$ does not attain the maximum spectral radius on $\mathcal{C}_{n, e}$, provided $n < n_U := b + \mathcal{R}_{\mathcal{D}_{n, e}}(\rho(\mathcal{Q}_{G, \mathcal{D}_{n, e}}))$.

    \item Note that $\rho(\mathcal{Q}_{G, \mathcal{V}_{n, e}}) \not\equiv 0$, due to Lemma \ref{comp_lemma_1}. If $\rho(\mathcal{Q}_{G, \mathcal{V}_{n, e}}) < \rho(\mathcal{T}_1(\mathcal{V}_{n, e}))$, use Lemma \ref{comp_lemma_1}(i) to conclude that $G$ does not attain the maximum spectral radius on $\mathcal{C}_{n, e}$, provided $n \ge n_L := e + 2$.

    \item On the other hand, if $\rho(\mathcal{Q}_{G, \mathcal{V}_{n, e}}) \ge \rho(\mathcal{T}_1(\mathcal{V}_{n, e}))$, use Lemma \ref{comp_lemma_1}(ii) to conclude that $G$ does not attain the maximum spectral radius on $\mathcal{C}_{n, e}$, provided $n > n_L := e + 2 + \mathcal{R}_{\mathcal{V}_{n, e}}(\rho(\mathcal{Q}_{G, \mathcal{V}_{n, e}}))$.

    \item Verify that $n_U > n_L$, thus showing that at least one of the graphs $\mathcal{D}_{n, e}$ and $\mathcal{V}_{n, e}$ has a larger spectral radius than $G$, regardless of what the value of $n$ is. Report success.
\end{enumerate}

The algorithm reports failure and stops if any of the verifications does not pass for any $T \in \mathcal{S}^*_e$. It can be conveniently implemented using \texttt{SageMath} \cite{SageMath} as shown in \cite{GitHub}. The developed program uses a strictly decreasing sequence of positive integers $(s_1, s_2, \ldots, s_c)$ to represent a T-subgraph $T$ so that $c$ is maximal such that $A_{c, c + 1} = 1$, and for each $i \in \{1, 2, \ldots, c \}$, the value $s_i$ is maximal such that $A_{i, i + s_i} = 1$, with $A$ being the (stepwise) adjacency matrix of $T$. With this in mind, generating all the members of $\mathcal{S}^*_e$ gets down to finding all the ways in which $e$ can be represented as a decreasing sum of positive integers. This can be efficiently done, e.g., via the auxiliary function \texttt{generateIncreasing} implemented in \texttt{C++20} in \cite{StoDam2025}. The developed \texttt{SageMath} script can then be applied to iterate over all of these T-subgraphs and perform the verification algorithm on each of them. Since the algorithm reports success for any $e \in \{4, 5, \ldots, 130 \}$ such that $t_e \ge 1$, Proposition~\ref{computer_prop} follows from the obtained computational results and Theorems \ref{con_thresh_th} and \ref{bell_th}. We end the section by noting that the algorithm implementation can be optimized by using the following two claims which can easily be verified.

\begin{lemma}
    For any $e \ge 1$ such that $t_e \ge 1$, and $n_e \ge b_e$, we have
    \[
        \mathcal{R}_{\mathcal{D}_{n, e}}(\lambda) = \frac{\lambda(\lambda + 1)(\lambda^3 - \lambda^2(k - 1) - \lambda(k + t + 1) + (t + 1)(k - t - 1))}{\lambda^3 - \lambda^2(k - 2) - \lambda(k + t - 1) + t(k - t - 1)}
    \]
    and $\rho(\lambda^3 - \lambda^2(k - 1) - \lambda(k + t + 1) + (t + 1)(k - t - 1)) = \rho(\mathcal{T}_1(\mathcal{D}_{n, e}))$.
\end{lemma}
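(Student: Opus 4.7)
The plan is to compute $\mathcal{P}_{\mathcal{T}(\mathcal{D}_{n, e})}(\lambda)$ and $\mathcal{P}_{\mathcal{T}_1(\mathcal{D}_{n, e})}(\lambda)$ separately through an equitable-partition argument, and then divide. As already observed at the start of the proof of Lemma~\ref{comp_lemma_2}, we have $\mathcal{T}(\mathcal{D}_{n, e}) = K_t \lor (K_{k - t} + K_1)$ and $\mathcal{T}_1(\mathcal{D}_{n, e}) = K_{t + 1} \lor (K_{k - t} + K_1)$. In each case the natural three-cell partition, whose cells are the vertex sets of the ``left-hand'' clique, the clique $K_{k - t}$ and the isolated $K_1$, is equitable, with quotient matrices
\[
    Q_1 = \begin{bmatrix} t - 1 & k - t & 1 \\ t & k - t - 1 & 0 \\ t & 0 & 0 \end{bmatrix}, \qquad Q_2 = \begin{bmatrix} t & k - t & 1 \\ t + 1 & k - t - 1 & 0 \\ t + 1 & 0 & 0 \end{bmatrix}.
\]

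A routine cofactor expansion along the last row of each $3 \times 3$ determinant yields
\[
    \mathcal{P}_{Q_1}(\lambda) = \lambda^3 - (k - 2) \lambda^2 - (k + t - 1) \lambda + t(k - t - 1),
\]
\[
    \mathcal{P}_{Q_2}(\lambda) = \lambda^3 - (k - 1) \lambda^2 - (k + t + 1) \lambda + (t + 1)(k - t - 1),
\]
which already match the denominator and the cubic factor in the numerator of the target formula. The remaining eigenvalues of each graph must come from eigenvectors summing to zero on every cell; for a disjoint union of cliques these contribute only the eigenvalue $-1$, with total multiplicity $(t - 1) + (k - t - 1) = k - 2$ for $\mathcal{T}(\mathcal{D}_{n, e})$ and $t + (k - t - 1) = k - 1$ for $\mathcal{T}_1(\mathcal{D}_{n, e})$. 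Therefore
\[
    \mathcal{P}_{\mathcal{T}(\mathcal{D}_{n, e})}(\lambda) = (\lambda + 1)^{k - 2} \mathcal{P}_{Q_1}(\lambda), \qquad \mathcal{P}_{\mathcal{T}_1(\mathcal{D}_{n, e})}(\lambda) = (\lambda + 1)^{k - 1} \mathcal{P}_{Q_2}(\lambda),
\]
and substituting into $\mathcal{R}_{\mathcal{D}_{n, e}}(\lambda) = \lambda \, \mathcal{P}_{\mathcal{T}_1(\mathcal{D}_{n, e})}(\lambda) / \mathcal{P}_{\mathcal{T}(\mathcal{D}_{n, e})}(\lambda)$ cancels the common factor $(\lambda + 1)^{k - 2}$ and produces the claimed expression.

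For the final equality, $Q_2$ is an irreducible nonnegative matrix, since $\mathcal{T}_1(\mathcal{D}_{n, e})$ is connected, so its Perron root equals $\rho(\mathcal{T}_1(\mathcal{D}_{n, e}))$. As this quantity is strictly greater than $-1$, it must be the largest real root of the cubic $\mathcal{P}_{Q_2}(\lambda)$ rather than of the factor $(\lambda + 1)^{k - 1}$, which is exactly the claim $\rho(\lambda^3 - \lambda^2(k - 1) - \lambda(k + t + 1) + (t + 1)(k - t - 1)) = \rho(\mathcal{T}_1(\mathcal{D}_{n, e}))$. The only step with any friction is the arithmetic in the two determinant expansions, and I expect this to be the main, but purely mechanical, obstacle.
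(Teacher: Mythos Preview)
Your proof is correct. The paper does not give a proof of this lemma at all --- it is stated as one of two ``claims which can easily be verified'' --- so there is no paper proof to compare against. Your equitable-partition computation is the natural way to verify it, and the determinant expansions and multiplicity counts check out. One phrasing nit: the graphs $K_t \lor (K_{k-t}+K_1)$ and $K_{t+1}\lor(K_{k-t}+K_1)$ are not disjoint unions of cliques, so the sentence ``for a disjoint union of cliques these contribute only the eigenvalue $-1$'' is slightly off; what you actually use (and what is true) is that each cell is a clique and the between-cell adjacency is all-or-nothing, so any vector summing to zero on every cell is a $(-1)$-eigenvector of the full adjacency matrix. It is also worth noting that the polynomials $\mathcal{P}_{Q_1}$ and $\mathcal{P}_{Q_2}$ you obtain are exactly the two cubic factors appearing in \eqref{comp_aux_2} of Lemma~\ref{gnm_lemma}, so the paper's five-cell quotient computation for $\mathcal{D}_{n,e}$ already encodes your three-cell computations for $\mathcal{T}(\mathcal{D}_{n,e})$ and $\mathcal{T}_1(\mathcal{D}_{n,e})$; this is why the author felt safe omitting the argument.
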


\begin{lemma}\label{v_simple_lemma}
    For any $e \ge 1$ and $n \ge e + 2$, we have
    \[
        \mathcal{R}_{\mathcal{V}_{n, e}}(\lambda) = \frac{\lambda(\lambda + 1)(\lambda^2 - \lambda - 2e)}{\lambda^2 - e}
    \]
    and $\rho(\lambda^2 - \lambda - 2e) = \rho(\mathcal{T}_1(\mathcal{V}_{n, e}))$.
\end{lemma}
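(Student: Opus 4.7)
The plan is to reduce the computation of $\mathcal{R}_{\mathcal{V}_{n,e}}(\lambda)$ to the determination of two characteristic polynomials, namely $\mathcal{P}_{\mathcal{T}(\mathcal{V}_{n,e})}(\lambda)$ and $\mathcal{P}_{\mathcal{T}_1(\mathcal{V}_{n,e})}(\lambda)$, and then simplify. By inspecting the construction of $\mathcal{V}_{n,e}$, I would first identify $\mathcal{T}(\mathcal{V}_{n,e}) = K_{1,e}$ and $\mathcal{T}_1(\mathcal{V}_{n,e}) = K_{1,e} \lor K_1 = K_2 \lor e K_1$.

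For the first polynomial, the spectrum of the star $K_{1,e}$ is classical (the eigenvalues are $\pm\sqrt{e}$ together with $0$ of multiplicity $e-1$), yielding $\mathcal{P}_{K_{1,e}}(\lambda) = \lambda^{e-1}(\lambda^2 - e)$. For the second polynomial, my approach will be to exploit the equitable partition of $K_2 \lor eK_1$ into the two ``hub'' vertices of the $K_2$ and the $e$ ``leaf'' vertices; the corresponding quotient matrix is $\begin{bmatrix} 1 & e \\ 2 & 0 \end{bmatrix}$, whose characteristic polynomial $\lambda^2 - \lambda - 2e$ supplies two eigenvalues. The remaining eigenvalues I would read off from vectors orthogonal to the indicator of each cell: the antisymmetric vector on the hub pair contributes the eigenvalue $-1$, while the $(e-1)$-dimensional space of zero-sum vectors supported on the leaves contributes the eigenvalue $0$ with multiplicity $e-1$. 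Together this gives $\mathcal{P}_{K_2 \lor eK_1}(\lambda) = \lambda^{e-1}(\lambda+1)(\lambda^2 - \lambda - 2e)$.

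With both polynomials in hand, the claimed formula for $\mathcal{R}_{\mathcal{V}_{n,e}}(\lambda)$ follows by direct substitution into the definition and cancellation of the common factor $\lambda^{e-1}$. For the spectral radius identity, I would invoke Lemma~\ref{third_lemma}, which already asserts that $\rho(\mathcal{T}_1(\mathcal{V}_{n,e})) = (1 + \sqrt{8e+1})/2$; this number coincides precisely with the larger root of the quadratic $\lambda^2 - \lambda - 2e$, while the other factors in the factorisation of $\mathcal{P}_{\mathcal{T}_1(\mathcal{V}_{n,e})}(\lambda)$ contribute only smaller real roots ($0$ and $-1$), so the identity is immediate.

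There is essentially no real obstacle here; the whole proof reduces to a classical spectral computation for two small, highly symmetric graphs. If anything warrants care, it is merely the bookkeeping of eigenvalue multiplicities when verifying that the quadratic factor $\lambda^2 - \lambda - 2e$ really does carry the spectral radius of $\mathcal{T}_1(\mathcal{V}_{n,e})$, but this is settled at once by the explicit factorisation of $\mathcal{P}_{K_2 \lor eK_1}(\lambda)$ derived above.
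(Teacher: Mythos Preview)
Your proposal is correct and provides exactly the routine verification that the paper omits (the paper states the lemma as a claim ``which can easily be verified'' and gives no proof). The identifications $\mathcal{T}(\mathcal{V}_{n,e}) = K_{1,e}$ and $\mathcal{T}_1(\mathcal{V}_{n,e}) = K_2 \lor eK_1$ are correct, your spectral computations for both graphs are accurate, and the appeal to Lemma~\ref{third_lemma} for the spectral radius identity is precisely the intended route.
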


\section{Comparison of graphs \texorpdfstring{$\mathcal{D}_{n, e}$}{D(n, e)} and \texorpdfstring{$\mathcal{V}_{n, e}$}{V(n, e)}}

In this section, we provide a criterion that compares $\gamma$ and $\chi$. We begin by introducing the polynomial
\begin{align*}
    \Psi_e(\lambda) &= \lambda^3 (k - 1)(k - 2)(k^2 - 3k + 4t)\\
    &- \lambda^2 (k^5 - 6k^4 + k^3 (4t + 15) - k^2(20t + 18) + k(8t^2 + 24t + 8) - (4t^2 + 12t))\\
    &- \lambda (k^2 - k + 2t)(k^3 + k^2(t - 4) - k(3t - 5) + (4t^2 - 2t - 2))\\
    &+ t(k - t - 1)(k^2 - 3k + 2t)(k^2 - k + 2t) .
\end{align*}
for every $e \ge 4$. Now, let $\psi_e$ be the largest real root of $\Psi_e(\lambda)$ and let
\[
    \omega_e = e + 2 + \frac{\psi_e(\psi_e + 1)(\psi_e^2 - \psi_e - 2e)}{\psi_e^2 - e}.
\]
With this in mind, we can state the two main results of the present section as follows.

\begin{proposition}\label{comparison_prop}
For any $e \ge 4$ and $n \ge e + 2$, exactly one of the following three statements holds:
\begin{enumerate}[label=\textbf{(\roman*)}]
    \item $\rho(\mathcal{V}_{n, e}) < \rho(\mathcal{D}_{n, e}) < \psi_e$;
    \item $\rho(\mathcal{V}_{n, e}) = \rho(\mathcal{D}_{n, e}) = \psi_e$;
    \item $\rho(\mathcal{V}_{n, e}) > \rho(\mathcal{D}_{n, e}) > \psi_e$.
\end{enumerate}
\end{proposition}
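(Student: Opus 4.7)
The plan is to reduce the three-way comparison of $\chi := \rho(\mathcal{V}_{n, e})$, $\gamma := \rho(\mathcal{D}_{n, e})$ and $\psi_e$ to a sign analysis of the single polynomial $\Psi_e$. For this, I introduce the auxiliary rational function
\[
    \Phi_e(\lambda) := \mathcal{R}_{\mathcal{D}_{n, e}}(\lambda) - \mathcal{R}_{\mathcal{V}_{n, e}}(\lambda) + (k - e),
\]
which, by Lemma~\ref{v_simple_lemma} and its preceding lemma, depends only on $e$ (through $k$ and $t$), not on $n$.

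The defining relations $\mathcal{R}_{\mathcal{V}_{n, e}}(\chi) = n - e - 2$ and $\mathcal{R}_{\mathcal{D}_{n, e}}(\gamma) = n - b_e = n - k - 2$ (from Lemma~\ref{poly_2_lemma} applied to $\mathcal{V}_{n, e}$ and $\mathcal{D}_{n, e}$) directly yield
\[
    \mathcal{R}_{\mathcal{V}_{n, e}}(\chi) - \mathcal{R}_{\mathcal{V}_{n, e}}(\gamma) = \mathcal{R}_{\mathcal{D}_{n, e}}(\gamma) - \mathcal{R}_{\mathcal{V}_{n, e}}(\gamma) + (k - e) = \Phi_e(\gamma).
\]
Since $\mathcal{R}_{\mathcal{V}_{n, e}}$ is strictly increasing (Lemma~\ref{rat_lemma}), $\chi - \gamma$ and $\Phi_e(\gamma)$ share the same sign, and the symmetric manipulation gives the same equivalence for $\Phi_e(\chi)$.

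Substituting the explicit formulas from Lemma~\ref{v_simple_lemma} and its preceding lemma and clearing denominators, I would verify the polynomial identity
\[
    4 (\lambda^2 - e) \bigl( \lambda^3 - (k - 2)\lambda^2 - (k + t - 1)\lambda + t(k - t - 1) \bigr) \, \Phi_e(\lambda) = \Psi_e(\lambda).
\]
This is a routine but bulky symbolic expansion; the vanishing of all coefficients of $\lambda^j$ for $j \ge 4$ on the left is forced by the identity $2e = k(k - 1) + 2t$, and the resulting cubic matches $\Psi_e$ at the leading and constant terms using $k^2 - k + 2t = 2e$ and $k^2 - 3k + 2t = 2(e - k)$. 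Arguing as in the proof of Lemma~\ref{comp_lemma_1}, one has $\chi, \gamma > k$, while $\rho(\mathcal{T}(\mathcal{D}_{n, e})) < k$ and $\sqrt{e} < k$ (since $e < k^2$), so both $\lambda^2 - e$ and the cubic factor on the left are positive at $\gamma$ and $\chi$. Consequently $\Phi_e(\gamma)$ and $\Psi_e(\gamma)$ share the same sign, and similarly at $\chi$.

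Since the leading coefficient $(k - 1)(k - 2)(k^2 - 3k + 4t)$ of $\Psi_e$ is positive for $k \ge 3$, it remains to show that $\psi_e$ is the unique real root of $\Psi_e$ in $(k, +\infty)$, so that the sign of $\Psi_e(\lambda)$ coincides with that of $\lambda - \psi_e$ throughout this interval. This I would handle by a direct symbolic check that $\Psi_e(k) < 0$ using $e = \binom{k}{2} + t$, and, should $\Psi_e$ admit three real roots greater than $k$, by ruling out the middle two via continuity in $n$ and the asymptotic behaviour $\chi > \gamma$ from Theorem~\ref{cvetrow_v_th}. With this in place, the chain
\[
    \operatorname{sgn}(\chi - \gamma) = \operatorname{sgn}(\Psi_e(\gamma)) = \operatorname{sgn}(\gamma - \psi_e), \qquad \operatorname{sgn}(\chi - \gamma) = \operatorname{sgn}(\chi - \psi_e)
\]
yields the trichotomy (i)--(iii) in the obvious way. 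The main obstacle is the polynomial identity above, which is mechanical but lengthy and best handled by computer algebra; the uniqueness of $\psi_e$ as a root of $\Psi_e$ in $(k, +\infty)$ is the subtler point, which I expect to follow from the structural factorisation together with the monotonicity already encoded in Lemma~\ref{rat_lemma}.
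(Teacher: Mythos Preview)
Your route to the key equivalence $\operatorname{sgn}(\chi-\gamma)=\operatorname{sgn}(\Psi_e(\gamma))$ is genuinely different from the paper's and is valid. The paper obtains this by the Rowlinson identity $y^\intercal(B-C)z=(\gamma-\chi)y^\intercal z$, expanding the bilinear form explicitly and then substituting the closed forms for $y_2/y_1,\,y_{k+1}/y_1,\,y_{k+2}/y_1$ from Lemma~\ref{gnm_technical_lemma}; after simplification this yields exactly $\Psi_e(\gamma)$. Your approach stays entirely within the $\mathcal{R}$-function machinery of Section~\ref{sc_tsub}: writing $\Phi_e=\mathcal{R}_{\mathcal{D}_{n,e}}-\mathcal{R}_{\mathcal{V}_{n,e}}+(k-e)$ and clearing denominators produces $\Psi_e$ directly, and the sign link then follows from Corollary~\ref{rat_cor} (note: you should invoke that corollary rather than Lemma~\ref{rat_lemma}, since you need the sign of the defining polynomial on all of $(\sqrt{e},\chi)$, not merely on $[\rho(\mathcal{T}_1(\mathcal{V}_{n,e})),\infty)$, and it is not a priori clear that $\gamma$ lies in the latter interval). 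Your identity does check out: the degree-$7$ through degree-$4$ coefficients of $D_1V_2-V_1D_2+(k-e)D_2V_2$ vanish using $2e=k^2-k+2t$, and the cubic matches $\tfrac14\Psi_e$. This avoids the Perron-vector lemma entirely, which is a clean gain.

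There are two real gaps. First, your setup uses the $\mathcal{R}_{\mathcal{D}_{n,e}}$ formula from the lemma preceding Lemma~\ref{v_simple_lemma}, which is stated only for $t_e\ge 1$, and the relation $n-b_e=n-k-2$ also fails for $t_e=0$; the paper handles $t_e=0$ separately by reducing to Bell's result. Second, and more substantively, your argument for the uniqueness of $\psi_e$ as a root of $\Psi_e$ in $(k,\infty)$ is not complete. Continuity in $n$ together with the asymptotic $\chi>\gamma$ from Theorem~\ref{cvetrow_v_th} does not rule out three roots $r_1<r_2<r_3$ in $(k,\infty)$: one can have $\gamma(n)$ and $\chi(n)$ coincide at each $r_i$ in turn, with $\chi-\gamma$ alternating sign and eventually settling positive, without contradiction. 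The paper closes this by proving (Lemma~\ref{increasing_lemma}) that $\Psi_e$ is strictly increasing on $[k,\infty)$ for $e\ge 28$, and verifying by computer that $\psi_e$ is the only root in $[k,\infty)$ for $4\le e\le 27$. You will need an argument of comparable strength here; the monotonicity encoded in Lemma~\ref{rat_lemma} concerns $\mathcal{R}_{\mathcal{D}_{n,e}}$ and $\mathcal{R}_{\mathcal{V}_{n,e}}$ separately and says nothing about their difference $\Phi_e$.
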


\begin{proposition}\label{activation_prop}
For any $e \ge 4$, we have $\omega_e > e + 2$, alongside the following:
\begin{enumerate}[label=\textbf{(\roman*)}]
    \item if $b_e \le n < \omega_e$, then $\rho(\mathcal{D}_{n, e}) < \psi_e$;
    \item if $n = \omega_e$, then $\rho(\mathcal{D}_{n, e}) = \psi_e$;
    \item if $n > \omega_e$, then $\rho(\mathcal{D}_{n, e}) > \psi_e$.
\end{enumerate}
\end{proposition}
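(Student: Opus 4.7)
The plan is to deduce Proposition~\ref{activation_prop} from Proposition~\ref{comparison_prop} by transferring the comparison of $\rho(\mathcal{D}_{n,e})$ with $\psi_e$ to the equivalent comparison of $\chi := \rho(\mathcal{V}_{n,e})$ with $\psi_e$, which is directly tractable through the explicit rational function $\mathcal{R}_{\mathcal{V}_{n,e}}$ from Lemma~\ref{v_simple_lemma}. Specifically, by Lemmas~\ref{hnm_lemma} and~\ref{rat_lemma}, $\chi$ is the unique value in $[\rho(K_2 \lor eK_1), +\infty)$ satisfying $\mathcal{R}_{\mathcal{V}_{n,e}}(\chi) = n - e - 2$, and $\mathcal{R}_{\mathcal{V}_{n,e}}$ is a strictly increasing bijection from this interval onto $[0, +\infty)$. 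Once one verifies that $\psi_e \ge \rho(K_2 \lor eK_1)$, the identity $\mathcal{R}_{\mathcal{V}_{n,e}}(\psi_e) = \omega_e - e - 2$ built into the definition of $\omega_e$ yields $\chi \lessgtr \psi_e \iff n \lessgtr \omega_e$, and Proposition~\ref{comparison_prop} transfers this equivalence to $\rho(\mathcal{D}_{n,e})$. This settles (i)--(iii) in the range $n \ge e + 2$.

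The remaining range $b_e \le n < e + 2$ is nonempty only when $t_e \ge 1$. Here I would argue by strict monotonicity of $\rho(\mathcal{D}_{n,e})$ in $n$: the graph $\mathcal{D}_{n,e}$ is a proper spanning subgraph of $\mathcal{D}_{n+1,e}$, since one extra $K_1$ in the inner disjoint union becomes a pendant-to-apex vertex upon taking the outer join with $K_1$, and hence the Perron--Frobenius theorem forces $\rho(\mathcal{D}_{n,e}) < \rho(\mathcal{D}_{n+1,e})$ for every $n \ge b_e$. Iterating and invoking the conclusion of the previous paragraph at $n = e + 2$ (which relies on $\omega_e > e + 2$), I obtain $\rho(\mathcal{D}_{n,e}) < \rho(\mathcal{D}_{e+2,e}) < \psi_e$ for all $b_e \le n < e + 2$, which is precisely statement (i) in this range.

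The main technical obstacle is thus the inequality $\omega_e > e + 2$, equivalently $\psi_e > \eta$ where $\eta := \rho(K_2 \lor eK_1) = \tfrac{1 + \sqrt{1 + 8e}}{2}$. Since the leading coefficient $(k-1)(k-2)(k^2 - 3k + 4t)$ of $\Psi_e(\lambda)$ is strictly positive (as $e \ge 4$ forces $k \ge 3$), establishing $\Psi_e(\eta) < 0$ will place $\eta$ strictly below the largest real root $\psi_e$ of $\Psi_e$. To this end, I would substitute the defining identity $\eta^2 = \eta + 2e = \eta + k(k-1) + 2t$ into $\Psi_e(\eta)$ and reduce it to a linear expression $\eta \cdot P(k,t) + Q(k,t)$ in $\eta$ with polynomial coefficients in $k$ and $t$. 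For $t = 0$ the reduction yields the clean factorization $\Psi_e(\eta) = -k^2(k-1)(k-2)(k+1)^2 < 0$ after a few lines (noting $\eta = k$ in this case); for $t \ge 1$, a more extensive but essentially routine polynomial expansion under the constraints $1 \le t \le k - 1$ and $k \ge 3$ verifies negativity. This algebraic reduction, rather than the conceptual structure, is where the bulk of the work lies.
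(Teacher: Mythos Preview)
Your plan is essentially the paper's own argument: for $n \ge e+2$, compare $\chi$ with $\psi_e$ via the strictly increasing bijection $\mathcal{R}_{\mathcal{V}_{n,e}}$ and then transfer the trichotomy to $\gamma$ through Proposition~\ref{comparison_prop}; for $b_e \le n < e+2$, invoke monotonicity of $\gamma$ in $n$ together with the already-established case $n = e+2$. The paper carries out the latter step via Lemmas~\ref{poly_2_lemma}, \ref{rat_lemma} and Corollary~\ref{rat_cor} (i.e., $\gamma = \mathcal{R}_{\mathcal{D}_{n,e}}^{-1}(n - b_e)$ is increasing in $n$), while you use a direct Perron--Frobenius subgraph argument; both give the same conclusion.

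Two small slips. First, $\mathcal{D}_{n,e}$ is a proper \emph{induced} subgraph of $\mathcal{D}_{n+1,e}$, not a spanning one (the orders differ by one); your verbal description of the extra pendant is correct, only the word ``spanning'' is wrong. Second, the range $b_e \le n < e+2$ is nonempty also when $t_e = 0$ (e.g.\ $e = 6$, $k = 4$, $b_e = 5$), so the restriction to $t_e \ge 1$ is unwarranted; fortunately your monotonicity argument works verbatim in that case too.

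The one substantive difference is how you obtain $\psi_e > \eta$. The paper does not compute $\Psi_e(\eta)$ at all; it simply invokes Lemma~\ref{psi_value_lemma}, which already establishes the stronger inequality $\psi_e > k+1$, and then observes
\[
\eta = \frac{1+\sqrt{8e+1}}{2} < \frac{1+\sqrt{4k(k+1)+1}}{2} = k+1 < \psi_e.
\]
Your proposed reduction of $\Psi_e(\eta)$ to a linear expression in $\eta$ via $\eta^2 = \eta + 2e$ is perfectly valid (and your $t=0$ factorization checks out), but for $t \ge 1$ the resulting two-variable sign analysis is no lighter than the $\Psi_e(k+1)$ computation already carried out in Lemma~\ref{psi_value_lemma}; you may as well cite that lemma directly.
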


We start by showing that $\psi_e$ and $\omega_e$ are well-defined for any $e \ge 4$ using the next two lemmas.

\begin{lemma}\label{psi_cubic_lemma}
    For any $e \ge 4$, the polynomial $\Psi_e(\lambda)$ is cubic and its leading coefficient is positive.
\end{lemma}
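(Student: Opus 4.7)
The plan is straightforward: inspect the $\lambda^3$ coefficient directly from the displayed expression for $\Psi_e(\lambda)$, which is
\[
    (k-1)(k-2)(k^2 - 3k + 4t),
\]
and show that under the hypothesis $e \ge 4$ it is strictly positive. Since the coefficient is then nonzero, the polynomial has degree exactly $3$, and positivity gives the sign assertion simultaneously.

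First I would record the elementary fact that $e \ge 4$ forces $k = k_e \ge 3$. Indeed, $\binom{3}{2} = 3 \le 4 \le e$, so $k_e \ge 3$ by the definition of $k_e$. Thus the factor $(k-1)(k-2)$ is already strictly positive. It remains to handle the cubic-in-$k$ factor $k^2 - 3k + 4t$.

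For this I would split into two cases according to $k$. If $k \ge 4$, then $k^2 - 3k = k(k - 3) \ge 4 > 0$, and since $t \ge 0$ always, $k^2 - 3k + 4t > 0$. If instead $k = 3$, then $\binom{k}{2} = 3$, so $t = e - 3$; the assumption $e \ge 4$ now forces $t \ge 1$, and therefore $k^2 - 3k + 4t = 4t \ge 4 > 0$. In both cases $k^2 - 3k + 4t > 0$, and combining with $(k-1)(k-2) > 0$ yields a positive leading coefficient, which is also nonzero; hence $\Psi_e(\lambda)$ is cubic with positive leading coefficient.

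There is no real obstacle here; the only thing to be careful about is that the boundary case $k = 3$, $t = 0$ (corresponding to $e = 3$) is exactly what is excluded by $e \ge 4$, so the case split is clean. The lemma then follows.
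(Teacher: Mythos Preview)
Your proof is correct and follows essentially the same approach as the paper: both arguments inspect the leading coefficient $(k-1)(k-2)(k^2-3k+4t)$ directly and verify its positivity by an elementary case split. The paper handles $e=4,5$ by direct evaluation and then treats $e\ge 6$ via $k\ge 4$, whereas you organize the cases by $k=3$ versus $k\ge 4$; this is a cosmetic difference only.
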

\begin{proof}
    Let $P(e) = (k - 1)(k - 2)(k^2 - 3k + 4t)$. We trivially observe that $P(4) = 8$ and $P(5) = 16$. Now, suppose that $e \ge 6$. In this case, we have $k \ge 4$ and $t \ge 0$, hence
    \[
        \pushQED{\qed}
        P(e) \ge (k - 1)(k - 2)(k^2 - 3k) = k(k - 1)(k - 2)(k - 3) > 0. \qedhere
    \]
\end{proof}

\begin{lemma}\label{psi_value_lemma}
    For any $e \ge 4$, we have $\psi_e > k_e + 1$.
\end{lemma}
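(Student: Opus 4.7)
By Lemma~\ref{psi_cubic_lemma}, $\Psi_e(\lambda)$ is cubic with positive leading coefficient, so $\Psi_e(\lambda) \to +\infty$ as $\lambda \to +\infty$. Consequently, to conclude that the largest real root $\psi_e$ of $\Psi_e(\lambda)$ exceeds $k+1$, it is enough to establish the single inequality $\Psi_e(k+1) < 0$. My plan is to reduce the lemma to this inequality and verify it by direct substitution and elementary algebraic manipulation.

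I would substitute $\lambda = k+1$ into the four summands defining $\Psi_e(\lambda)$ and expand, collecting like terms. The result is a polynomial expression $\Phi(k, t) := \Psi_e(k+1)$ in the integer parameters $k \ge 3$ and $0 \le t \le k-1$ determined by $e = \binom{k}{2} + t \ge 4$. The boundary cases $t = 0$ and $t = k-1$ serve as organisational guides: in both of them, the constant summand $t(k-t-1)(k^2-3k+2t)(k^2-k+2t)$ of $\Psi_e(\lambda)$ vanishes, so $\Phi$ should collapse to a compact product of linear factors in $k$ (of the general shape $-4k(k-1)(k-2)(k+1)(k+2)$ when $t=0$, and $-(k-1)(k-2)(k+1)^3(k+4)$ when $t=k-1$), which is manifestly negative for $k \ge 3$.

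For intermediate $t$, I would regard $\Phi(k, t)$ as a polynomial in $t$ of degree at most four whose leading coefficient is the negative constant $-4$ (arising from $t \cdot (-t) \cdot (2t) \cdot (2t)$ in the constant summand). I would then show that $\Phi(k, \cdot)$ does not change sign on the closed interval $[0, k-1]$, either by controlling its derivative in $t$ and ruling out interior sign changes, or by regrouping the expansion as a sum each of whose terms is individually non-positive for admissible $(k, t)$. The principal obstacle will be the bulk of the algebra: a naive expansion of $\Psi_e(k+1)$ produces dozens of monomials in $k$ and $t$, and finding the right regrouping to make the sign transparent requires care. I expect the factor patterns $(k+1)$, $(k-1)$ and $(k-2)$ suggested by the factorisation of the leading coefficient $(k-1)(k-2)(k^2-3k+4t)$ from Lemma~\ref{psi_cubic_lemma} to persist in $\Phi(k, t)$ and pull out cleanly, leaving a residual cofactor whose positivity for $k \ge 3$ is routine to verify.
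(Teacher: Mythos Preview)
Your reduction is sound and matches the paper's: since $\Psi_e$ is cubic with positive leading coefficient (Lemma~\ref{psi_cubic_lemma}), it suffices to establish $\Psi_e(k+1)<0$. Your endpoint factorisations are also both correct: one does indeed get $\Phi(k,0)=-4k(k-1)(k-2)(k+1)(k+2)$ and $\Phi(k,k-1)=-(k-1)(k-2)(k+1)^3(k+4)$.

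Where your plan diverges from the paper is in handling intermediate $t$. The paper does not interpolate between the endpoints; instead it treats $t=0$ by factoring $\Psi_e(\lambda)$ itself (which gives $\psi_e = k+1+\tfrac{4}{k-3}$ explicitly), disposes of $t\in\{1,2,3\}$ by writing $\Phi(k,t)$ as a single-variable polynomial in $k$ and checking its sign directly, and for $t\ge 4$ bounds the quartic $\Phi(k,t)$ above by a \emph{linear} function of $t$ using the crude inequalities $t^2\ge 4t$, $t^3\ge 16t$, $t^4\ge 64t$, after which only the two ends of that linear form need to be inspected.

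Your proposed interpolation is not unreasonable, but the specific expectation that $(k+1)(k-1)(k-2)$ pulls out of $\Phi(k,t)$ for general $t$ is false: already at $t=2$ the paper records $\Phi(k,2)=-4(k^5-3k^4-5k^3+47k^2-28k+20)$, which is not divisible by $k-1$ (its value at $k=1$ is $-128$, not $0$). So the ``factor out the expected linear pieces and check the cofactor'' route will not go through as stated. You would instead need a genuine argument that the quartic in $t$ has no root in $(0,k-1)$, which is itself a two-parameter inequality of comparable difficulty; the paper's small-$t$ case split plus linear-in-$t$ majorisation for $t\ge 4$ is a cleaner way to close this.
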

\begin{proof}
    If $t = 0$, then $k \ge 4$ and we have
    \[ 
        \Psi_e(\lambda) = k(k - 1)(k - 2) \, \lambda(\lambda + 1)(\lambda(k - 3) - (k - 1)^2),
    \]
    hence
    \[
        \psi_e = \frac{(k - 1)^2}{k - 3} = k + 1 + \frac{4}{k - 3} > k + 1.
    \]
    On the other hand, if $t = 1, 2, 3$, then we get
    \begin{align*}
        \Psi_e(k + 1) &= -(k - 1)(k - 2)(4k^3 + 5k^2 - 19k - 8),\\
        \Psi_e(k + 1) &= -4(k^5 - 3k^4 - 5k^3 + 47k^2 - 28k + 20),\\
        \Psi_e(k + 1) &= -4k^5 + 15k^4 - 2k^3 - 411k^2 + 302k - 504,
    \end{align*}
    respectively. It is not difficult to verify that $\Psi_e(k + 1) < 0$ holds in each of these cases, which means that $\psi_e > k + 1$, by Lemma \ref{psi_cubic_lemma}.
    
    Now, suppose that $t \ge 4$. In this case, a routine computation leads to
    \begin{align*}
        \Psi_e(k + 1) &= -4t^4 - t^3 (4k^2 - 4k + 12) - t^2 (k^4 + 6k^3 + 23k^2 - 22k - 8)\\
        &+ t(8k^4 - 12k^3 - 32k^2 + 4k + 24) - (4k^5 - 20k^3 + 16k) .
    \end{align*}
    Since $4k^2 - 4k + 12, k^4 + 6k^3 + 23k^2 - 22k - 8 > 0$ alongside $t^2 \ge 4t$, $t^3 \ge 16t$ and $t^4 \ge 64t$, we have
    \begin{align*}
        \Psi_e(k + 1) &= -256t - 16t (4k^2 - 4k + 12) - 4t (k^4 + 6k^3 + 23k^2 - 22k - 8)\\
        &+ t(8k^4 - 12k^3 - 32k^2 + 4k + 24) - (4k^5 - 20k^3 + 16k) ,
    \end{align*}
    which implies
    \begin{equation}\label{comp_aux_4}
        \Psi_e(k + 1) \le t(4k^4 - 12k^3 - 188k^2 + 156k - 392) - (4k^5 - 20k^3 + 16k) .
    \end{equation}

    If $4k^4 - 12k^3 - 188k^2 + 156k - 392 \le 0$, then we obtain
    \[
        \Psi_e(k + 1) = -(4k^5 - 20k^3 + 16k) < 0,
    \]
    which yields $\psi_e > k + 1$. On the other hand, if $4k^4 - 12k^3 - 188k^2 + 156k - 392 > 0$, then \eqref{comp_aux_4} implies
    \begin{align*}
        \Psi_e(k + 1) &\le (k - 1)(4k^4 - 12k^3 - 188k^2 + 156k - 392) - (4k^5 - 20k^3 + 16k)\\
        &= -4(k - 1)(4k^3 + 43k^2 - 43k + 98) < 0.
    \end{align*}
    In this case, we also get $\psi_e > k + 1$.
\end{proof}

Lemma \ref{psi_cubic_lemma} verifies that $\Psi_e(\lambda)$ has a real root, hence $\psi_e$ is well-defined. On the other hand, Lemma \ref{psi_value_lemma} shows that $\psi_e^2 - \psi_e - 2e > 0$, since
\[
    \frac{1 + \sqrt{8e + 1}}{2} < \frac{1 + \sqrt{4k(k + 1) + 1}}{2} = \frac{1 + (2k + 1)}{2} = k + 1 < \psi_e.
\]
Therefore, for each $e \ge 4$, the value $\omega_e$ is well-defined and we have $\omega_e > e + 2$. We proceed with the following lemma on the spectral properties of $\mathcal{D}_{n, e}$.

\begin{lemma}\label{gnm_technical_lemma}
    Suppose that $e \ge 4$, $t_e \ge 1$ and $n \ge b_e$, and let $Y_i = \dfrac{y_i}{y_1}$ for $i \in \{ 2, k_e + 1, \linebreak k_e + 2 \}$. Then we have
    \begin{align*}
        Y_2 &= \frac{\gamma(\gamma + 2) - k + t + 1}{\gamma(\gamma + 1)(\gamma - k + t + 1) - t(\gamma(\gamma + 2) - k + t + 1)},\\
        Y_{k + 1} &= \frac{\gamma(\gamma + 1)}{\gamma(\gamma + 1)(\gamma - k + t + 1) - t(\gamma(\gamma + 2) - k + t + 1)},\\
        Y_{k + 2} &= \frac{(\gamma + 1)(\gamma - k + t + 1)}{\gamma(\gamma + 1)(\gamma - k + t + 1) - t(\gamma(\gamma + 2) - k + t + 1)}.
    \end{align*}
\end{lemma}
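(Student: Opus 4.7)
The plan is to translate the three eigenvalue equations \eqref{gnm_local_2}, \eqref{gnm_local_3}, \eqref{gnm_local_4} from Lemma~\ref{gnm_lemma} into a $3 \times 3$ linear system for the normalized entries $Y_2$, $Y_{k+1}$, $Y_{k+2}$, and then solve it by elementary substitution. Dividing each of the three equations by $y_1 > 0$ yields
\[
    (\gamma + 1 - t) Y_2 - (k - t) Y_{k + 1} - Y_{k + 2} = 1, \qquad -t Y_2 + (\gamma + 1 - k + t) Y_{k + 1} = 1, \qquad -t Y_2 + \gamma Y_{k + 2} = 1.
\]
Since $y_1, \ldots, y_n$ are the coordinates of the Perron vector, the three unknowns are strictly positive and uniquely determined, so the system is nonsingular and it only remains to write the solution explicitly.

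First I would solve the last two equations for $Y_{k+1}$ and $Y_{k+2}$ in terms of $Y_2$, obtaining $Y_{k+1} = (1 + tY_2)/(\gamma + 1 - k + t)$ and $Y_{k+2} = (1 + tY_2)/\gamma$. Substituting these into the first equation and clearing the denominator $\gamma(\gamma + 1 - k + t)$ yields a linear equation in $Y_2$ alone. A short calculation — grouping the terms in $Y_2$ on the left and the constants on the right — produces
\[
    \bigl[\gamma(\gamma+1)(\gamma+1-k) - t(\gamma + 1 - k + t)\bigr] Y_2 = \gamma^2 + 2\gamma + 1 - k + t,
\]
using along the way the identity $(\gamma+1-k+t)(\gamma+1-t) = (\gamma+1)^2 - k(\gamma+1) + t(k-t)$, which cancels the $\gamma t(k-t)$ cross-term.

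At this stage I would simplify the bracketed coefficient into the form claimed in the lemma by adding and subtracting $\gamma(\gamma+1)t$: indeed,
\[
    \gamma(\gamma+1)(\gamma+1-k) - t(\gamma + 1 - k + t) = \gamma(\gamma+1)(\gamma+1-k+t) - t\bigl(\gamma^2 + 2\gamma + 1 - k + t\bigr),
\]
which is exactly the denominator appearing in the stated expression for $Y_2$ (upon rewriting $\gamma^2 + 2\gamma + 1 - k + t$ as $\gamma(\gamma+2) - k + t + 1$). This delivers the formula for $Y_2$. The formulas for $Y_{k+1}$ and $Y_{k+2}$ then follow immediately by plugging the resulting expression for $1 + tY_2 = (\gamma+1)^2 \cdot Y_2 / (\gamma + 2\gamma/\ldots)$ — more precisely, by observing that the common denominator of all three $Y_i$ is the same, and computing the respective numerators as $\gamma(\gamma+1)$ and $(\gamma+1)(\gamma-k+t+1)$ after one more line of algebra in each case.

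There is no real obstacle beyond bookkeeping; the only delicate point is the simplification step that shows the two equivalent forms of the denominator agree, which is a routine algebraic identity and should be verified directly. Positivity of the denominator (needed for the $Y_i$ to be well-defined) is automatic from the fact that the linear system admits the unique positive solution given by the normalized Perron entries.
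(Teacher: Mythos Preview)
Your proof is correct and follows essentially the same approach as the paper: both divide equations \eqref{gnm_local_2}--\eqref{gnm_local_4} by $y_1$ and solve the resulting $3\times 3$ linear system for $Y_2$, $Y_{k+1}$, $Y_{k+2}$. The only cosmetic difference is that the paper first extracts the ratios $Y_{k+1}/Y_{k+2}$ and $Y_2/Y_{k+1}$ and then solves for the common scaling factor $\xi$, whereas you eliminate by direct substitution; the algebra and the final formulas are identical.
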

\begin{proof}
    The equations \eqref{gnm_local_2}--\eqref{gnm_local_4} transform into
    \begin{align}
        \label{exaux_3} (\gamma + 1) Y_2 &= 1 + t Y_2 + (k - t) Y_{k + 1} + Y_{k + 2},\\
        \label{exaux_4} (\gamma + 1) Y_{k + 1} &= 1 + t Y_2 + (k - t) Y_{k + 1},\\
        \label{exaux_5} \gamma Y_{k + 2} &= 1 + t Y_2,
    \end{align}
    respectively. By combining \eqref{exaux_4} and \eqref{exaux_5}, we get
    \[
        (\gamma + 1) Y_{k + 1} = \gamma Y_{k + 2} + (k - t) Y_{k + 1},
    \]
    hence
    \begin{equation}\label{exaux_6}
        \frac{Y_{k + 1}}{Y_{k + 2}} = \frac{\gamma}{\gamma - k + t + 1} .
    \end{equation}
    Also, by multiplying \eqref{exaux_3} with $\gamma$ and plugging in \eqref{exaux_4} and \eqref{exaux_6}, we obtain
    \[
        \gamma(\gamma + 1) Y_2 = \gamma (\gamma + 1) Y_{k + 1} + (\gamma - k + t + 1) Y_{k + 1},
    \]
    which implies
    \begin{equation}\label{exaux_7}
        \frac{Y_2}{Y_{k + 1}} = \frac{\gamma(\gamma + 2) - k + t + 1}{\gamma(\gamma + 1)} .
    \end{equation}
    From \eqref{exaux_6} and \eqref{exaux_7}, we conclude that
    \begin{align*}
        Y_2 &= \xi \left( \gamma(\gamma + 2) - k + t + 1 \right),\\
        Y_{k + 1} &= \xi \gamma (\gamma + 1),\\
        Y_{k + 2} &= \xi (\gamma + 1)(\gamma - k + t + 1)
    \end{align*}
    holds for some $\xi > 0$. By using \eqref{exaux_5}, a routine computation now leads to
    \[
        \pushQED{\qed}
        \xi = \frac{1}{\gamma(\gamma + 1)(\gamma - k + t + 1) - t(\gamma(\gamma + 2) - k + t + 1)} . \qedhere
    \]
\end{proof}

We also need the next technical lemma on the monotonicity of $\lambda \mapsto \Psi_e(\lambda)$.

\begin{lemma}\label{increasing_lemma}
    For any $e \ge 28$, the function $\lambda \mapsto \Psi_e(\lambda)$ is strictly increasing on $[k_e, +\infty)$.
\end{lemma}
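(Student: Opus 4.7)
The plan is to show that $\Psi_e'(\lambda) \geq 0$ for every $\lambda \geq k_e$, with equality at most at isolated points, which yields strict monotonicity of $\Psi_e$ on $[k_e, +\infty)$. Write $\Psi_e(\lambda) = A_3 \lambda^3 + A_2 \lambda^2 + A_1 \lambda + A_0$; by Lemma \ref{psi_cubic_lemma}, $A_3 = (k - 1)(k - 2)(k^2 - 3k + 4t) > 0$, so $\Psi_e'(\lambda) = 3 A_3 \lambda^2 + 2 A_2 \lambda + A_1$ is an upward-opening quadratic. For such a quadratic to be non-negative on $[k, +\infty)$, it suffices that its vertex lie at or to the left of $k$ and that its value at $k$ be non-negative. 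These two sufficient conditions amount to
\[
    3 A_3 k + A_2 \geq 0 \qquad \text{and} \qquad 3 A_3 k^2 + 2 A_2 k + A_1 \geq 0,
\]
which I would verify for all integer pairs $(k, t)$ with $k \geq 8$ (forced by $e \geq 28$) and $0 \leq t \leq k - 1$.

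For the first inequality, substituting the explicit formulas for $A_2$ and $A_3$ and expanding produces a polynomial in $k$ and $t$ whose dominant piece is $2k^5 - 12k^4 = 2k^4(k - 6)$, comfortably positive for $k \geq 8$, with lower-order corrections in $t$ bounded by $t \leq k - 1$ and therefore absorbable. For the second inequality I would proceed analogously: the dominant positive term is $3 k^7$ from $3 A_3 k^2$, which overpowers the negative contributions $2 A_2 k$ and $A_1$ (each of order at most $k^6$) once $k \geq 8$. In both cases the proof reduces, after expansion, to verifying a one-variable polynomial inequality in $k$ after fixing $t$ in the range $[0, k - 1]$, or equivalently to checking that certain coefficients of $t^j$ combine favorably.

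The main obstacle will be the algebraic bookkeeping: each expression expands into roughly a dozen monomials with mixed signs, and the grouping must stay tight enough to remain valid at the threshold $k = 8$, which is exactly where $e = 28$ first becomes possible. A cleaner route is likely to organize each expression as a polynomial in $t$ of degree at most two or three with coefficients in $\mathbb{Z}[k]$ and check non-negativity of that $t$-polynomial on $[0, k - 1]$ using that, for $k \geq 8$, the $k$-coefficients have predictable sign patterns. Handling the extremal cases $t = 0$ and $t = k - 1$ (where some factors collapse) separately will serve both as sanity checks and as a guide for how to group terms in the general-$t$ verification.
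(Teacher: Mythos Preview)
Your approach is essentially the same as the paper's: the vertex condition $3A_3 k + A_2 \ge 0$ is exactly $\tfrac{1}{2}\Psi_e''(k) \ge 0$, and together with $\Psi_e'(k) \ge 0$ this is precisely what the paper verifies (it phrases the first step as ``$\Psi_e''$ is increasing, so show $\Psi_e''(k) > 0$'', then bounds $\Psi_e'(k)$ from below). The paper also carries out the algebra exactly as you suggest at the end, writing each quantity as a polynomial in $t$ with coefficients in $\mathbb{Z}[k]$ and using $0 \le t \le k-1$.

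One slip to fix when you execute this: $A_3$ has degree $4$ in $k$, so $3A_3 k^2$ is of order $k^6$, not $k^7$, and $2A_2 k$ is also of order $k^6$ with the opposite sign. Thus the ``dominant term overpowers'' heuristic fails as stated; the leading term of $\Psi_e'(k)$ is $k^6$, arising from the cancellation $3k^6 - 2k^6$, and you really do need the careful $t$-polynomial bookkeeping (as the paper does, obtaining $\Psi_e'(k) \ge (k-1)(k^5 - 6k^4 - 20k^3 + 43k^2 + 4k - 12)$) rather than a crude degree count.
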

\begin{proof}
    Observe that
    \begin{align*}
        \Psi'_e(\lambda) &= 3\lambda^2 (k - 1)(k - 2)(k^2 - 3k + 4t)\\
        &- 2\lambda (k^5 - 6k^4 + k^3 (4t + 15) - k^2(20t + 18) + k(8t^2 + 24t + 8) - (4t^2 + 12t))\\
        &- (k^2 - k + 2t)(k^3 + k^2(t - 4) - k(3t - 5) + (4t^2 - 2t - 2))
    \end{align*}
    and
    \begin{align*}
        \Psi''_e(\lambda) &= 6\lambda (k - 1)(k - 2)(k^2 - 3k + 4t)\\
        &- 2(k^5 - 6k^4 + k^3 (4t + 15) - k^2(20t + 18) + k(8t^2 + 24t + 8) - (4t^2 + 12t)) .
    \end{align*}
    From Lemma~\ref{psi_cubic_lemma}, we conclude that $\lambda \mapsto \Psi''_e(\lambda)$ is strictly increasing, hence for any $\lambda \ge k$, we have
    \begin{align*}
        \Psi''_e(\lambda) &\ge 6k(k - 1)(k - 2)(k^2 - 3k + 4t)\\
        &- 2(k^5 - 6k^4 + k^3 (4t + 15) - k^2(20t + 18) + k(8t^2 + 24t + 8) - (4t^2 + 12t)) ,
    \end{align*}
    i.e.,
    \[
        \Psi''_e(\lambda) \ge -t^2(16k - 8) + t(16k^3 - 32k^2 + 24) + (4k^5 - 24k^4 + 36k^3 - 16k) .
    \]
    Since $0 \le t \le k - 1$, we further obtain
    \begin{align*}
        \Psi''_e(\lambda) &\ge -(k - 1)^2(16k - 8) + (4k^5 - 24k^4 + 36k^3 - 16k)\\
        &= 4(k - 1)(k^4 - 5k^3 + 10k - 2)
    \end{align*}
    for any $\lambda \ge k$. Since $k \ge 5$, we trivially observe that $k^4 - 5k^3 + 10k - 2 > 0$, hence $\lambda \mapsto \Psi_e'(\lambda)$ is strictly increasing on $[k, +\infty)$.
    
    A routine computation gives
    \begin{align*}
        \Psi'_e(k) &= -8t^3 - t^2(22k^2 - 18k - 4) + t(3k^4 + 6k^3 - 17k^2 + 12k + 4)\\
        &+ (k^6 - 7k^5 + 8k^4 + 9k^3 - 9k^2 - 2k) .
    \end{align*}
    Due to $0 \le t \le k - 1$, we have
    \begin{align*}
        \Psi'_e(k) &\ge (k^6 - 7k^5 + 8k^4 + 9k^3 - 9k^2 - 2k) - (k - 1)^2 (22k^2 - 18k - 4) - 8(k - 1)^3\\
        &= (k - 1)(k^5 - 6k^4 - 20k^3 + 43k^2 + 4k - 12) .
    \end{align*}
    It is not difficult to verify that $k^5 - 6k^4 - 20k^3 + 43k^2 + 4k - 12 > 0$, provided $k \ge 8$. Therefore, we have $\Psi'_e(\lambda) > 0$ for any $\lambda \ge k$, which means that $\lambda \mapsto \Psi_e(\lambda)$ is strictly increasing on $[k, +\infty)$.
\end{proof}

We are now in a position to complete the proof of Proposition \ref{comparison_prop} as follows.

\begin{proof}[Proof of Proposition \ref{comparison_prop}]
    First, assume that $t = 0$. As noted in the proof of Lemma \ref{psi_value_lemma}, in this case we have $\psi_e = k + 1 + \frac{4}{k - 3}$. Therefore, the result follows from \cite[Lemma~1]{bell1991} for the case $n \ge e + 3$, while it is trivial to extend this proof to also cover the case $n = e + 2$. 
    
    Now, suppose that $t \ge 1$. We follow Rowlinson \cite{rowlinson1988} in observing that $y^\intercal (B - C) z = (\gamma - \chi) y^\intercal z$, where $y^\intercal z > 0$. 
    Since $t \ge 1$, the matrix $B - C$ has $e - k \ge 1$ entries above the main diagonal that are equal to one. Therefore, $y^\intercal (B - C) z = \alpha - \beta$, where
    \[
        \beta = y_2 (z_{k + 3} + z_{k + 4} + \cdots + z_{e + 2}) + z_2 (y_{k + 3} + y_{k + 4} + \cdots + y_{e + 2}) = (e - k)(y_2 z_3 + z_2 y_n),
    \]
    while
    \begin{align*}
        \alpha &= \sum_{i = 3}^{t + 1} \sum_{j = i + 1}^{k + 2} (y_i z_j + z_i y_j) + \sum_{i = t + 2}^{k} \sum_{j = i + 1}^{k + 1} (y_i z_j + z_i y_j)\\
        &= z_3 \left( \sum_{i = 3}^{t + 1} \sum_{j = i + 1}^{k + 2} (y_i + y_j) + \sum_{i = t + 2}^{k} \sum_{j = i + 1}^{k + 1} (y_i + y_j) \right)\\
        &= z_3 \left( \sum_{i = 3}^{t + 1} \sum_{j = i + 1}^{k + 2} (y_2 + y_j) + \sum_{i = t + 2}^{k} \sum_{j = i + 1}^{k + 1} 2y_{k + 1} \right)\\
        &= z_3 \left( \sum_{i = 3}^{t + 1} ((k + t - 2i + 3)y_2 + (k - t)y_{k + 1} + y_{k + 2}) + \sum_{i = t + 2}^{k} 2(k - i + 1)y_{k + 1} \right)\\
        &= z_3 \left( (t - 1)y_{k + 2} + (k - 1)(t - 1) y_2 + (k - t)(k - 2) y_{k + 1} \right) .
    \end{align*}

    Suppose that $\gamma > \chi$. In this case, we have $\alpha > \beta$, hence
    \[
        (e - k) z_2 y_n < z_3 \left( \left( (k - 1)(t - 1) - (e - k)\right)y_2 + (k - 2)(k - t)y_{k + 1} + (t - 1)y_{k + 2} \right),
    \]
    i.e.,
    \begin{equation}\label{exaux_1}
        \frac{z_2}{z_3} < \frac{\left( (k - 1)(t - 1) - (e - k)\right)y_2 + (k - 2)(k - t)y_{k + 1} + (t - 1)y_{k + 2}}{(e - k)y_n} .
    \end{equation}
    By combining \eqref{hnm_local_2} and \eqref{hnm_local_3}, we get $(\chi + 1)z_2 = \chi z_3 + e z_3$, which leads us to
    \begin{equation}\label{exaux_2}
        \frac{z_2}{z_3} = \frac{\chi + e}{\chi + 1} = 1 + \frac{e - 1}{\chi + 1} > 1 + \frac{e - 1}{\gamma + 1} = \frac{\gamma + e}{\gamma + 1}.
    \end{equation}
    Let $Y_i = \dfrac{y_i}{y_1}$ for $i \in \{2, k + 1, k + 2, n \}$ and note that $Y_n = \frac{1}{\gamma}$. Thus, from \eqref{exaux_1} and \eqref{exaux_2}, we obtain
    \[
        \frac{(e - k)(\gamma + e)}{\gamma(\gamma + 1)} < \left( (k - 1)(t - 1) - (e - k)\right)Y_2 + (k - 2)(k - t)Y_{k + 1} + (t - 1)Y_{k + 2} .
    \]
    By applying Lemma \ref{gnm_technical_lemma} and performing a routine computation, we reach $\Psi_e(\gamma) < 0$. Similarly, we can show that $\Psi_e(\gamma) > 0$ if $\gamma < \chi$, and $\Psi_e(\gamma) = 0$ if $\gamma = \chi$.

    From Lemma \ref{psi_value_lemma}, we know that $\psi_e > k$. If $4 \le e \le 27$, then with the help of any adequate mathematical software, we can verify that $\Psi_e(\lambda)$ has three distinct real roots such that $\psi_e$ is the only root from $[k, +\infty)$. On the other hand, if $e \ge 28$, then Lemma~\ref{increasing_lemma} implies that $\lambda \mapsto \Psi_e(\lambda)$ is strictly increasing on $[k, +\infty)$, which means that $\psi_e$ is the only root of $\Psi_e(\lambda)$ from $[k, +\infty)$. Therefore, in any case, we have $\Psi_e(\lambda) > 0$ for any $\lambda \in (\psi_e, +\infty)$, and $\Psi_e(\lambda) < 0$ for any $\lambda \in [k, \psi_e)$.

    Finally, note that $\mathcal{D}_{n, e}$ contains $K_{k + 1}$ as a proper subgraph, hence $\gamma > k$. With this in mind, we conclude that $\Psi_e(\gamma) > 0$ if and only if $\gamma > \psi_e$, $\Psi_e(\gamma) = 0$ if and only if $\gamma = \psi_e$, and $\Psi_e(\gamma) < 0$ if and only if $\gamma < \psi_e$. Thus, we have that exactly one of the three statements (i), (ii) and (iii) from Proposition \ref{comparison_prop} holds.
\end{proof}

Proposition \ref{activation_prop} can now be proved by using Proposition \ref{comparison_prop} together with the results from Section \ref{sc_tsub}.

\begin{proof}[Proof of Proposition \ref{activation_prop}]
     We have shown that $\omega_e > e + 2$. Suppose that $n \ge e + 2$. As already noted, we have
    \[
        \psi_e > \frac{1 + \sqrt{8e + 1}}{2} = \rho(K_2 \lor eK_1) = \rho(\mathcal{T}_1(\mathcal{V}_{n, e})).
    \]
    With this in mind, Lemma \ref{v_simple_lemma} yields $\omega_e = e + 2 + \mathcal{R}_{\mathcal{V}_{n, e}}(\psi_e)$. Since $|\mathcal{T}_1(\mathcal{V}_{n, e})| = e + 2$, by Lemma \ref{poly_2_lemma}, Lemma~\ref{rat_lemma} and Corollary~\ref{rat_cor}, we conclude that $\chi = \psi_e$ holds if $n = \omega_e$, while $\chi > \psi_e$ holds if $n > \omega_e$, and $\chi < \psi_e$ holds if $e + 2 \le n < \omega_e$. Proposition \ref{comparison_prop} implies that $\gamma = \psi_e$ holds if $n = \omega_e$, while $\gamma > \psi_e$ holds if $n > \omega_n$, and $\gamma < \psi_e$ holds if $e + 2 \le n < \omega_e$. This proves statements (ii) and (iii), and partially proves statement~(i) for the case $e + 2 \le n < \omega_n$. Now, suppose that $b \le n < e + 2$. In this case, $\gamma < \psi_e$ follows from Lemma \ref{poly_2_lemma}, Lemma \ref{rat_lemma}, Corollary \ref{rat_cor} and the fact that $\gamma < \psi_e$ is satisfied when $n = e + 2$, due to $\omega_e > e + 2$.
\end{proof}

By virtue of Propositions~\ref{computer_prop}, \ref{comparison_prop} and \ref{activation_prop}, we obtain the solution to the spectral radius maximization problem on $\mathcal{C}_{n, e}$ for the case $4 \le e \le 130$.

\begin{theorem}
    For any $e \in \{4, 5, \ldots, 130 \}$ and $n \ge b_e$, we have:
    \begin{enumerate}[label=\textbf{(\roman*)}]
        \item if $b_e \le n < \omega_e$, then $\mathcal{D}_{n, e}$ is the unique graph attaining the maximum spectral radius on $\mathcal{C}_{n, e}$;
        \item if $n = \omega_e$, then $\mathcal{D}_{n, e}$ and $\mathcal{V}_{n, e}$ are the only two graphs attaining the maximum spectral radius on $\mathcal{C}_{n, e}$;
        \item if $n > \omega_e$, then $\mathcal{V}_{n, e}$ is the unique graph attaining the maximum spectral radius on $\mathcal{C}_{n, e}$.
    \end{enumerate}
\end{theorem}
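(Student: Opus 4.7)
The plan is to combine Propositions \ref{computer_prop}, \ref{comparison_prop} and \ref{activation_prop} in a straightforward way; essentially all the substantive work has been done in the preceding sections, and this final statement is mostly a logical assembly.

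First I would invoke Proposition \ref{computer_prop} to restrict the list of candidate extremal graphs: for $e \in \{4,5,\ldots,130\}$ and $n \ge b_e$, any graph $G$ attaining the maximum spectral radius on $\mathcal{C}_{n,e}$ must satisfy $G \cong \mathcal{D}_{n,e}$, or else $n \ge e+2$ and $G \cong \mathcal{V}_{n,e}$. Thus the entire problem reduces to deciding which of these two graphs has the larger spectral radius, whenever both are defined.

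Next, I would split the analysis according to $n$ versus $e+2$. If $b_e \le n < e+2$, then $\mathcal{V}_{n,e}$ is not defined and $\mathcal{D}_{n,e}$ is the unique candidate; since Proposition \ref{activation_prop} guarantees $\omega_e > e+2$, every such $n$ automatically lies in the range $n < \omega_e$, which is consistent with claim (i). If on the other hand $n \ge e+2$, then I would use Proposition \ref{activation_prop} to locate $\rho(\mathcal{D}_{n,e})$ relative to $\psi_e$: we have $\rho(\mathcal{D}_{n,e}) < \psi_e$, $=\psi_e$, or $>\psi_e$ according as $n < \omega_e$, $n = \omega_e$, or $n > \omega_e$. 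Feeding this into Proposition \ref{comparison_prop} yields, in the same three cases, $\rho(\mathcal{V}_{n,e}) < \rho(\mathcal{D}_{n,e})$, equality, or $\rho(\mathcal{V}_{n,e}) > \rho(\mathcal{D}_{n,e})$. Combining these conclusions with the candidate list from Proposition \ref{computer_prop} produces exactly statements (i), (ii) and (iii).

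There is no substantive obstacle at this stage; the proof is essentially bookkeeping. The only minor points to keep track of are the harmony at the boundary (the case $n=\omega_e$ producing two extremal graphs, as dictated by parts (ii) of Propositions \ref{comparison_prop} and \ref{activation_prop}) and the observation that the sub-case $b_e \le n < e+2$ is absorbed into case (i) thanks to $\omega_e > e+2$, so that no separate statement is required there.
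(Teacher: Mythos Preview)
Your proposal is correct and matches the paper's own approach: the paper states the theorem immediately after Proposition \ref{activation_prop} with no proof beyond the sentence ``By virtue of Propositions~\ref{computer_prop}, \ref{comparison_prop} and \ref{activation_prop}, we obtain the solution\ldots,'' and your write-up simply makes this deduction explicit. The case split on $n$ versus $e+2$ and the observation that $\omega_e > e+2$ absorbs the sub-case $b_e \le n < e+2$ into (i) are exactly the bookkeeping the paper leaves to the reader.
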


\section{Extremality of graph \texorpdfstring{$\mathcal{V}_{n, e}$}{V(n, e}}

In the last section, we apply a strategy inspired by Bell to investigate the spectral radius maximization problem on $\mathcal{C}_{n, e}$ for the remaining case when $e > 130$. In this case, we prove that $\mathcal{V}_{n, e}$ is the unique solution whenever $n \ge e + 2 + 13 \sqrt{e}$, thereby extending Theorem~\ref{cvetrow_v_th}, where the same extremal result was proved with a lower bound $f(e)$ on $n$ that is larger than $e^2 (e + 2)^2$ for $e \ge 7$. Our main result is given in the following theorem.

\begin{theorem}\label{last_th}
    Let $e \ge 5$ be such that $t_e \ge 1$, and let $\ell_e = \dfrac{e k_e}{e - k_e - 1}$. Then for any
    \begin{equation}\label{last_form}
        n > e + 2 + \frac{\ell_e(\ell_e + 1)(\ell_e^2 - \ell_e - 2e)}{\ell_e^2 - e} ,
    \end{equation}
    the graph $\mathcal{V}_{n, e}$ is the unique graph that attains the maximum spectral radius on $\mathcal{C}_{n, e}$.
\end{theorem}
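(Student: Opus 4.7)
The plan is to apply Lemma~\ref{comp_lemma_1} uniformly over all threshold graphs $G \in \mathcal{C}_{n, e}$ with $G \not\cong \mathcal{V}_{n, e}$; by Theorem~\ref{con_thresh_th}, any spectral-radius maximiser is a threshold graph, so this is enough. The entire argument reduces to a single ``crossover'' estimate
\[
    \rho(\mathcal{Q}_{G, \mathcal{V}_{n, e}}) \le \ell_e
\]
for every such $G$, which we denote $(\star)$. Granted $(\star)$, either $\rho(\mathcal{Q}_{G, \mathcal{V}_{n, e}}) < \rho(\mathcal{T}_1(\mathcal{V}_{n, e}))$ and Lemma~\ref{comp_lemma_1}(i) gives $\chi > \rho(G)$ directly, or else the monotonicity of $\mathcal{R}_{\mathcal{V}_{n, e}}$ (Lemma~\ref{rat_lemma}) together with Lemma~\ref{v_simple_lemma} yields
\[
    \mathcal{R}_{\mathcal{V}_{n, e}}(\rho(\mathcal{Q}_{G, \mathcal{V}_{n, e}})) \le \mathcal{R}_{\mathcal{V}_{n, e}}(\ell_e) = \frac{\ell_e(\ell_e + 1)(\ell_e^2 - \ell_e - 2e)}{\ell_e^2 - e},
\]
so hypothesis~\eqref{last_form} activates Lemma~\ref{comp_lemma_1}(ii), again giving $\chi > \rho(G)$. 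Either way, $\mathcal{V}_{n, e}$ is the unique maximiser.

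The core step is $(\star)$, for which I would adapt Bell's Perron-vector strategy from \cite{bell1991}. With $\mu = \rho(G)$ and Perron vectors $x$ of $G$ and $z$ of $\mathcal{V}_{n, e}$, the symmetric identity
\[
    z^\intercal (A(\mathcal{V}_{n, e}) - A(G))\, x = (\chi - \mu)\, z^\intercal x
\]
recasts the sign of $\chi - \mu$ as an edge-difference comparison: the edges of $\mathcal{V}_{n, e}$ absent from $G$ all share the centre of the $K_{1, e}$ inside $\mathcal{T}(\mathcal{V}_{n, e})$, while the edges of $G$ absent from $\mathcal{V}_{n, e}$ all lie inside $\mathcal{T}(G)$. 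I would then use Rowlinson's monotonicity $x_1 \ge x_2 \ge \cdots \ge x_n > 0$ (and its analogue for $z$), the explicit Perron equations of $\mathcal{V}_{n, e}$ from Lemma~\ref{hnm_lemma}, and the bound $\rho(\mathcal{T}(G)) < k_e$ coming from Theorems~\ref{bruhoff_th} and \ref{row_th} (since $\mathcal{T}(G)$ is a graph of size $e$), to show that whenever $\mu > \ell_e$ the right-hand side is strictly positive, forcing $\chi > \mu$. This is precisely the assertion that $\rho(\mathcal{Q}_{G, \mathcal{V}_{n, e}}) \le \ell_e$.

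The main obstacle will be the tight algebraic accounting required to recover the exact bound $\ell_e = \frac{e k_e}{e - k_e - 1}$ rather than a looser one of order $k_e$ alone. The denominator $e - k_e - 1$ should emerge from balancing the $e$ edges of $\mathcal{T}(G)$ against the spectral ceiling $k_e$ on $\rho(\mathcal{T}(G))$; I expect to group the edge-difference contributions so that the terms involving the apex vertex (which is adjacent to everything in both graphs) cancel cleanly, leaving an expression in which the interior of $\mathcal{T}(G)$ is controlled by its spectral radius and the Perron coordinates $z_2, z_3$ given in Lemma~\ref{hnm_lemma}. Once $(\star)$ has been established by this route, the chain of implications above completes the proof of Theorem~\ref{last_th}.
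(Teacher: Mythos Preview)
Your overall approach is the paper's: Bell--Rowlinson Perron-vector comparison via $z^\intercal(C-A)x=(\chi-\rho)z^\intercal x$, using the monotonicity $x_1\ge\cdots\ge x_n$ and the Perron relations for $\mathcal{V}_{n,e}$ from Lemma~\ref{hnm_lemma}. Two points in your write-up need correcting, though.

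First, the detour through $(\star)$ and Lemma~\ref{comp_lemma_1} is unnecessary, and the logical link you state is wrong. The Perron-vector computation does \emph{not} produce ``$\mu>\ell_e\Rightarrow\chi>\mu$'', nor is that statement the same as $\rho(\mathcal{Q}_{G,\mathcal{V}_{n,e}})\le\ell_e$. What the calculation actually yields is the contrapositive form used in the paper: assuming $\rho(G)\ge\chi$ (i.e.\ $G$ is a maximiser), one bounds the averaged ``lost-edge'' weight $q$ by $\frac{q}{x_n}\le\frac{\rho}{\rho-k}\le\frac{\chi}{\chi-k}$ and combines this with $\frac{z_2}{z_3}=\frac{\chi+e}{\chi+1}$ to get $\chi\le\ell_e$. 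The hypothesis~\eqref{last_form} is exactly $n>e+2+\mathcal{R}_{\mathcal{V}_{n,e}}(\ell_e)$, which forces $\chi>\ell_e$, and the contradiction is immediate. No appeal to $\mathcal{Q}_{G,\mathcal{V}_{n,e}}$ is needed.

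Second, the spectral input you name, $\rho(\mathcal{T}(G))<k_e$, is not the lever the paper pulls. The bound $(\rho-k)q\le x_1$ comes instead from the raw edge count $e<\binom{k+1}{2}$, which controls the eigenvalue-equation sums $\sum_{j=2}^{k+2}(\rho+1)x_j<(k+1)\sum_{j=1}^{k+2}x_j$ on the dense top block of the stepwise matrix (see \eqref{last_aux_4}). This is where the denominator $e-k-1$ in $\ell_e$ ultimately comes from: it is the crossover of the two decreasing functions $\lambda\mapsto\frac{\lambda}{\lambda-k}$ and $\lambda\mapsto\frac{\lambda+e}{\lambda+1}$, with no direct appearance of $\rho(\mathcal{T}(G))$. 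If you rewrite your middle paragraph with the variables in the right roles (threshold $\chi$, not $\mu$, against $\ell_e$) and swap in the edge-count bound for the spectral-radius bound on $\mathcal{T}(G)$, your sketch becomes the paper's proof.
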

\begin{proof}
    First of all, note that $e > k + 1$ for each $e \ge 5$, hence $\ell_e$ is well-defined. It is straightforward to verify that $\ell_e > k + 1$. Indeed, $\ell_e > k + 1$ is equivalent to $ek > (k + 1)(e - k - 1)$, i.e., $e < (k + 1)^2$, which trivially holds. Since $\rho(K_2 \lor eK_1) < k + 1$, we have that the right-hand side of \eqref{last_form} is also well-defined and it is larger than $e + 2$.

    Let $G \in \mathcal{C}_{n, e}$ be a graph that attains the maximum spectral radius on $\mathcal{C}_{n, e}$. By way of contradiction, suppose that $G \not\cong \mathcal{V}_{n, e}$. By Theorem \ref{con_thresh_th}, we have that $G$ is a threshold graph. We denote its (stepwise) adjacency matrix by $A$. Also, let $x$ be the corresponding Perron vector of $G$ and let $\rho = \rho(G)$. Recall that $x_1 \ge x_2 \ge \cdots \ge x_n > 0$. Since $\mathcal{D}_{n, e}$ contains $K_{k + 1}$ as a proper subgraph, we have $\gamma > k$, which implies $\rho > k$. Besides, $\mathcal{V}_{n, e}$ contains $K_2 \lor eK_1$ as a proper subgraph, hence $\chi > k$ due to Lemma \ref{third_lemma}.
    
    Let $c$ be maximal such that $A_{c, c+1} = 1$. Since $G \not\cong \mathcal{V}_{n, e}$, we have $3 \le c \le k$. Now, for each $i \in \{2, 3, \ldots, c\}$, let $s_i$ be maximal such that $A_{i, s_i} = 1$. Note that $k + 2 \le s_2 \le e + 1$ and $\sum_{i = 2}^c (s_i - i) = e$. Let $h$ be maximal such that $A_{k + 2, h} = 1$ and observe that $2 \le h \le c$. Similarly to Proposition \ref{comparison_prop}, we have $x^\intercal (C - A) z = (\chi - \rho) x^\intercal z$, where $x^\intercal z > 0$. Let $r$ be the number of entries of $C - A$ above the main diagonal that are equal to one, i.e., $r = e + 2 - s_2 = \sum_{i = 3}^c (s_i - i)$. Therefore, $x^\intercal (C - A) z = \alpha - \beta$, where
    \[
        \alpha = x_2 (z_{s_2 + 1} + z_{s_2 + 2} + \cdots + z_{e + 2}) + z_2(x_{s_2 + 1} + x_{s_2 + 2} + \cdots + x_{e + 2}) = r(x_2 z_3 + z_2 x_n),
    \]
    while
    \begin{align*}
        \beta = \sum_{i = 3}^c \sum_{j = i + 1}^{s_i} (x_i z_j + x_j z_i) &= z_3 \sum_{i = 3}^c \sum_{j = i + 1}^{s_i} (x_i + x_j)\\
        &\le z_3 \sum_{i = 3}^c \sum_{j = i + 1}^{s_i} (x_2 + x_j) = r x_2 z_3 + z_3 \sum_{i = 3}^c \sum_{j = i + 1}^{s_i} x_j .
    \end{align*}
    If we let
    \[
        q = \frac{1}{r} \sum_{i = 3}^c \sum_{j = i + 1}^{s_i} x_j,
    \]
    then from $\rho \ge \chi$, we obtain $\beta \ge \alpha$, which leads us to $r (x_2 z_3 + q z_3) \ge r(x_2 z_3 + z_2 x_n)$, i.e.,
    \begin{equation}\label{last_aux_10}
        \frac{q}{x_n} \ge \frac{z_2}{z_3} .
    \end{equation}

    To resume, we define $q_1$ and $q_2$ as
    \[
        q_1 = \frac{1}{r} \sum_{i = 3}^h \sum_{j = i + 1}^{s_i} x_j \qquad \mbox{and} \qquad q_2 = \frac{1}{r} \sum_{i = h + 1}^c \sum_{j = i + 1}^{s_i} x_j,
    \]
    so that $q = q_1 + q_2$. Also, let $a' = \frac{\sum_{j = 2}^{k + 2} x_j}{k + 1}$. For any $i \in \{3, 4, \ldots, h\}$, we have $s_i \ge k + 2$, hence
    \[
        \frac{\sum_{j = i + 1}^{s_i} x_j}{s_i - i} \le \frac{\sum_{j = 2}^{k + 2} x_j}{k + 1} = a' .
    \]
    Therefore, we obtain
    \begin{equation}\label{last_aux_1}
        q_1 \le \frac{a'}{r} \sum_{i = 3}^h (s_i - i) .
    \end{equation}
    Since $e < \binom{k + 1}{2}$ and $x_1 \ge x_2 \ge \cdots \ge x_n > 0$, it is not difficult to conclude that
    \begin{equation}\label{last_aux_4}
        \sum_{j = 2}^{k + 2} (\rho + 1) x_j < (k + 1) \sum_{j = 1}^{k + 2} x_j,
    \end{equation}
    which implies
    \[
        (\rho + 1)a' = \frac{1}{k + 1} \sum_{j = 2}^{k + 2} (\rho + 1) x_j < \sum_{j = 1}^{k + 2} x_j = (k + 1)a' + x_1 ,
    \]
    i.e.,
    \begin{equation}\label{last_aux_2}
        (\rho - k) a' < x_1 .
    \end{equation}
    By combining \eqref{last_aux_1} and \eqref{last_aux_2}, we reach
    \begin{equation}\label{last_aux_6}
        (\rho - k) q_1 \le \frac{x_1}{r} \sum_{i = 3}^h (s_i - i) .
    \end{equation}

    We now turn our attention to $q_2$. Let $a'' = \frac{\sum_{j = 1}^{k + 2} x_j}{\rho + 1}$. Since $s_{h + 1} < k + 2$, we have
    \[
        (\rho + 1) x_i = \sum_{j = 1}^{s_i} x_j < \sum_{j = 1}^{k + 2} x_j = (\rho + 1) a''
    \]
    for any $i \in \{ h + 1, h + 2, \ldots, c \}$. Therefore,
    \begin{equation}\label{last_aux_5}
        q_2 \le \frac{a''}{r} \sum_{i = h + 1}^c (s_i - i) .
    \end{equation}
    Moreover, from \eqref{last_aux_4} we obtain $(\rho - k) \sum_{j = 1}^{k + 2} x_j < (\rho + 1) x_1$, i.e.,
    \begin{equation}\label{last_aux_3}
        (\rho - k) a'' < x_1 .
    \end{equation}
    By combining \eqref{last_aux_5} and \eqref{last_aux_3}, we conclude that
    \begin{equation}\label{last_aux_7}
        (\rho - k) q_2 \le \frac{x_1}{r} \sum_{i = h + 1}^c (s_i - i).
    \end{equation}

    From \eqref{last_aux_6} and \eqref{last_aux_7}, we get
    \[
        (\rho - k) q \le \frac{x_1}{r} \sum_{i = 3}^c (s_i - i) = x_1 = \rho x_n ,
    \]
    which means that
    \begin{equation}\label{last_aux_9}
        \frac{q}{x_n} \le \frac{\rho}{\rho - k} = 1 + \frac{k}{\rho - k} \le 1 + \frac{k}{\chi - k} = \frac{\chi}{\chi - k} .
    \end{equation}
    From \eqref{hnm_local_2} and \eqref{hnm_local_3}, we obtain
    \begin{equation}\label{last_aux_8}
        \frac{z_2}{z_3} = \frac{\chi + e}{\chi + 1} .        
    \end{equation}
    Finally, by combining \eqref{last_aux_10}, \eqref{last_aux_9} and \eqref{last_aux_8}, we have $\frac{\chi + e}{\chi + 1} \le \frac{\chi}{\chi - k}$, i.e., $\chi \le \ell_e$. We have already shown that $\ell_e > k + 1 > \rho(\mathcal{T}_1(\mathcal{V}_{n, e}))$, which means that a contradiction follows directly from Lemma \ref{poly_2_lemma}, Lemma \ref{rat_lemma} and Corollary \ref{rat_cor}.
\end{proof}

We end the paper with the following corollary of Theorem \ref{last_th}.

\begin{corollary}
    For any $e > 130$ and $n \ge e + 2 + 13 \sqrt{e}$, the graph $\mathcal{V}_{n, e}$ is the unique graph that attains the maximum spectral radius on $\mathcal{C}_{n, e}$.
\end{corollary}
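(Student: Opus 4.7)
The plan is to split on whether $t_e$ equals zero, applying Bell's Theorem~\ref{bell_th} in one case and Theorem~\ref{last_th} in the other.

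In the case $t_e = 0$, we have $e = \binom{k_e}{2}$, and $e > 130$ forces $k_e \ge 17$. Theorem~\ref{bell_th} yields the desired uniqueness once $n > f(k_e)$, so it suffices to verify $f(k_e) < e + 2 + 13\sqrt{e}$. A direct expansion gives
\[
    f(k_e) - (e + 2) = 4k_e + 8 + \frac{32}{k_e - 3} + \frac{16}{(k_e - 3)^2} < 4k_e + 11,
\]
while $13\sqrt{e} = \frac{13}{\sqrt{2}}\sqrt{k_e(k_e - 1)} > 9(k_e - 1)$, and the resulting inequality $4k_e + 11 < 9(k_e - 1)$ is routine for $k_e \ge 5$.

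For $t_e \ge 1$, I would invoke Theorem~\ref{last_th}. By Lemma~\ref{v_simple_lemma}, the bound in \eqref{last_form} equals $e + 2 + \mathcal{R}_{\mathcal{V}_{n, e}}(\ell_e)$, so the task reduces to proving $\mathcal{R}_{\mathcal{V}_{n, e}}(\ell_e) < 13\sqrt{e}$. Substituting $\ell_e = \frac{e k_e}{e - k_e - 1}$ and combining with $\ell_e + 1 = \frac{(k_e + 1)(e - 1)}{e - k_e - 1}$, $\ell_e^2 - \ell_e - 2e = \frac{e(e - 1)((k_e + 1)(k_e + 2) - 2e)}{(e - k_e - 1)^2}$, $\ell_e^2 - e = \frac{e(e - 1)((k_e + 1)^2 - e)}{(e - k_e - 1)^2}$ yields the clean factorization
\[
    \mathcal{R}_{\mathcal{V}_{n, e}}(\ell_e) = \frac{e \, k_e (k_e + 1)(e - 1)\bigl((k_e + 1)(k_e + 2) - 2e\bigr)}{(e - k_e - 1)^2\bigl((k_e + 1)^2 - e\bigr)}.
\]
A sign analysis of the logarithmic derivative in $e$ shows that, for each fixed $k_e$, the quotient $\mathcal{R}_{\mathcal{V}_{n, e}}(\ell_e)/\sqrt{e}$ is strictly decreasing in $e$ on $[\binom{k_e}{2} + 1, \binom{k_e + 1}{2} - 1]$, so the worst case for $k_e \ge 17$ occurs at $e = \binom{k_e}{2} + 1$. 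At this endpoint, after squaring, the target reduces to
\[
    128(k_e^2 - 1)^2(k_e^2 - k_e + 2) < 169(k_e - 3)^4(k_e + 5)^2,
\]
whose leading-coefficient ratio is $128/169 < 1$ and which is directly verified for the finite collection of small $k_e \ge 17$. The residual values $k_e = 16$ (so $e \in \{131, \ldots, 135\}$) are handled by an immediate direct computation.

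The main obstacle is the tightness of the inequality in the second case: asymptotically $\mathcal{R}_{\mathcal{V}_{n, e}}(\ell_e) \sim 8\sqrt{2}\sqrt{e} \approx 11.31\sqrt{e}$, and already at $k_e = 17$, $e = 137$ one has $\mathcal{R}_{\mathcal{V}_{n, e}}(\ell_e)/\sqrt{e} \approx 12.51$, leaving a margin of under $4\%$. Consequently, loose bounds such as $\ell_e \le \sqrt{2e} + 3$ combined with $\ell_e^2 - e > e$ produce an asymptotic constant closer to $14\sqrt{e}$ and are insufficient; the clean closed-form factorization exhibited above is essential to close the argument. A secondary, more mechanical subtlety is the monotonicity of $\mathcal{R}_{\mathcal{V}_{n, e}}(\ell_e)/\sqrt{e}$ in $e$ at fixed $k_e$, which must be justified by a somewhat lengthy but routine analysis of a derivative sign.
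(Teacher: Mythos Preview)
Your outline is correct and follows the paper's overall structure: split on $t_e = 0$ (use Bell's Theorem~\ref{bell_th}) versus $t_e \ge 1$ (use Theorem~\ref{last_th}), then show the bound in \eqref{last_form} is below $e + 2 + 13\sqrt{e}$. Your treatment of the $t_e = 0$ case is essentially identical to the paper's.

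Where you diverge is in the $t_e \ge 1$ analysis. The paper does not compute a closed form for $\mathcal{R}_{\mathcal{V}_{n,e}}(\ell_e)$; instead it divides through by $(k_e - 1)$ (using $\sqrt{e} > (k_e - 1)/\sqrt{2}$), brute-force verifies \eqref{last_aux_11} by computer for $86 \le e \le 350$, and for $e \ge 351$ (so $k_e \ge 27$) bounds $\ell_e$ from above and below at the two endpoints $e = \binom{k_e}{2}+1$ and $e = \binom{k_e}{2} + k_e - 1$, combining these one-sided estimates into a rational function of $k_e$ alone that is shown to lie below $13/\sqrt{2}$. Your exact factorization
\[
\mathcal{R}_{\mathcal{V}_{n,e}}(\ell_e) = \frac{e\,k_e(k_e+1)(e-1)\bigl((k_e+1)(k_e+2)-2e\bigr)}{(e-k_e-1)^2\bigl((k_e+1)^2 - e\bigr)}
\]
is cleaner and (together with the monotonicity-in-$e$ claim) collapses the verification to the single endpoint $e = \binom{k_e}{2}+1$, where it simplifies to $\mathcal{R}_{\mathcal{V}_{n,e}}(\ell_e) = 16e(k_e^2-1)/\bigl((k_e-3)^2(k_e+5)\bigr)$ and yields your polynomial inequality in $k_e$. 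This buys you a smaller residual computer check ($k_e = 16$ only, versus the paper's $86 \le e \le 350$) and a more transparent final inequality. The trade-off is that the monotonicity of $\mathcal{R}_{\mathcal{V}_{n,e}}(\ell_e)/\sqrt{e}$ in $e$ at fixed $k_e$ --- which you flag as the main remaining subtlety --- does require a genuine sign analysis of a five-term logarithmic derivative, whereas the paper's cruder endpoint bounds sidestep any such argument at the cost of pushing the cutoff to $k_e \ge 27$.
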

\begin{proof}
    First of all, suppose that $t = 0$. In this case we have $e = \binom{k}{2}$ and $k \ge 17$, hence the result directly follows from Theorem \ref{bell_th} by trivially verifying that
    \[
        \frac{1}{2} (k + 1)(k + 6) + 7 + \frac{32}{k - 3} + \frac{16}{(k - 3)^2} < \frac{k(k - 1)}{2} + 2 + \frac{13}{\sqrt{2}} \sqrt{k(k - 1)}
    \]
    holds for any $k \ge 17$.

    In the rest of the proof, we assume that $t \ge 1$. By virtue of Theorem \ref{last_th}, it suffices to show that
    \begin{equation}\label{last_aux_11}
        e + 2 + \frac{\ell(\ell + 1)(\ell^2 - \ell - 2e)}{\ell^2 - e} < e + 2 + 13 \sqrt{e},
    \end{equation}
    where $\ell = \frac{ek}{e - k - 1}$. It is trivial to verify by computer that \eqref{last_aux_11} holds when $86 \le e \le 350$. Now, suppose that $e \ge 351$ and note that $k \ge 27$. Since $\sqrt{e} > \sqrt{\frac{k(k - 1)}{2}} > \frac{k - 1}{\sqrt{2}}$, to finalize the proof, it is enough to show that
    \[
        \frac{\ell(\ell + 1)(\ell^2 - \ell - 2e)}{(k - 1)(\ell^2 - e)} < \frac{13}{\sqrt{2}}.
    \]
    Due to $\ell = k + \frac{k^2 + k}{e - k - 1}$, we have
    \begin{equation}\label{last_aux_12}
        \ell \le k + \frac{k^2 + k}{\left(\binom{k}{2} + 1\right) - k - 1} = \frac{k^2 - k + 2}{k - 3}
    \end{equation}
    and
    \begin{equation}\label{last_aux_13}
        \ell \ge k + \frac{k^2 + k}{\left(\binom{k}{2} + k - 1\right) - k - 1} = \frac{k^3 + k^2 - 2k}{k^2 - k - 4} .
    \end{equation}
    From \eqref{last_aux_12}, we get
    \begin{equation}\label{last_aux_14}
        \ell (\ell + 1) \le \frac{(k^2 - k + 2)(k^2 - 1)}{(k-3)^2}
    \end{equation}
    and
    \begin{equation}\label{last_aux_15}
        \ell(\ell - 1) - 2e \le \frac{(k^2 - k + 2)(k^2 - 2k + 5)}{(k - 3)^2} - 2 \left( \tbinom{k}{2} + 1 \right) = \frac{4(k - 1)(k^2 - k + 2)}{(k - 3)^2},
    \end{equation}
    while \eqref{last_aux_13} leads to
    \begin{align}\label{last_aux_16}
    \begin{split}
        \ell^2 - e &\ge \frac{(k^3 + k^2 - 2k)^2}{(k^2 - k - 4)^2} - \left( \tbinom{k}{2} + k - 1 \right)\\
        &= \frac{(k-1)(k+2)(k^2 + k - 4)(k^2 + 3k + 4)}{2(k^2 - k - 4)^2} .
    \end{split}
    \end{align}
    By combining \eqref{last_aux_14}--\eqref{last_aux_16}, we conclude that
    \[
        \frac{\ell(\ell + 1)(\ell^2 - \ell - 2e)}{(k - 1)(\ell^2 - e)} \le \frac{8(k + 1)(k^2 - k - 4)^2 (k^2 - k + 2)^2}{(k - 3)^4 (k + 2) (k^2 + k - 4) (k^2 + 3k + 4)} .
    \]
    The desired result now follows by trivially observing that
    \[
        \frac{8(k + 1)(k^2 - k - 4)^2 (k^2 - k + 2)^2}{(k - 3)^4 (k + 2) (k^2 + k - 4) (k^2 + 3k + 4)} < \frac{13}{\sqrt{2}}
    \]
    holds for any $k \ge 27$.
\end{proof}

\section*{Acknowledgements}

The author is grateful to Dragan Stevanovi\'c for all the useful comments and remarks. The author is supported by the Ministry of Science, Technological Development and Innovation of the Republic of Serbia, grant number 451-03-137/2025-03/200102, and the Science Fund of the Republic of Serbia, grant \#6767, Lazy walk counts and spectral radius of threshold graphs --- LZWK.

\section*{Conflict of interest}

The author declares that he has no conflict of interest.

\end{document}